\newcommand{\set}[2]{\{\,\,#1\,\,|\,\,#2\,\}}
\newcommand{\cp}{\mathbb CP}
\newcommand{\Z}{\mathbb Z}
\newcommand{\R}{\mathbb R}
\newcommand{\C}{\mathbb C}
\newcommand{\F}{\mathbb F}
\newcommand{\Mr}{M_\mathbb R}
\newcommand{\Ker}{\mathrm{Ker}}
\newcommand{\Ima}{\mathrm{Im}}
\newcommand{\unov}{\Lambda^{\Z_2}}
\newcommand{\unovo}{\Lambda^{\mathbb{Z}_2}_0}
\newcommand{\unovf}{\Lambda^{\mathbb{F}_2}}
\newcommand{\unovof}{\Lambda^{\mathbb{F}_2}_0}
\newtheorem{theorem}{Theorem}[section]
\newtheorem{lemma}[theorem]{Lemma}
\newtheorem{definition}[theorem]{Definition}
\newtheorem{corollary}[theorem]{Corollary}
\newtheorem{proposition}[theorem]{Proposition}
\newtheorem{example}[theorem]{Example}
\newtheorem{rmk}[theorem]{Remark}
\def\leq{\leqslant}
\def\geq{\geqslant}
\title{Floer Cohomology of Torus Fibers and Real Lagrangians in Fano Toric Manifolds}
\author{Garrett Alston 
\hspace{1cm} Lino Amorim}
\date{}
\begin{document}

\maketitle 
\begin{multicols}{2}

\begin{small}
\noindent
Garrett Alston,
 Department of Mathematics\\
 Kansas State University,\\
Manhattan, KS,\\
USA\\
\texttt{galston@math.ksu.edu}\\

\noindent
Lino Amorim,
 Department of Mathematics,\\
University of Wisconsin-Madison\\
Madison, WI,\\
USA\\
\texttt{amorim@math.wisc.edu}\\

\end{small}

\end{multicols}

\vspace{1cm}

\textbf{Abstract } 
This article deals with the Floer cohomology (with $\Z_2$ coefficients) between torus
fibers and the real Lagrangian in Fano toric manifolds. We first investigate the
conditions under which the Floer cohomology is defined, and then  develop a combinatorial
description of the Floer complex based on the polytope of the toric manifold. This description is used to show
that if the Floer cohomology is defined, and the Floer cohomology of the torus fiber is
non-zero, then the Floer cohomology of the pair is non-zero. Finally, we
develop some applications to non-displaceability and the minimum number of intersection
points under Hamiltonian isotopy. 

\vspace{1.5cm}

\section{Introduction}
The Lagrangian Floer theory of torus fibers in toric manifolds has been extensively studied in the literature, first in the Fano case by Cho and Oh in \cite{chooh} and later in complete generality by Fukaya, Oh, Ohta and Ono in \cite{toricfooo}.
One of the fundamental results of their work is that for each torus fiber $L_c$ and flat unitary line bundle $\mathcal L_\rho$ on $L_c$, the Floer cohomology $HF(L_c,\mathcal L_\rho)$ is well-defined.
In the terminology of \cite{fooo}, the objects $(L_c,\mathcal L_\rho)$ are weakly unobstructed.
Furthermore, a function $W$ called the potential function is defined on the set of pairs $(L_c,\mathcal L_\rho)$, and the Floer cohomology of $(L_c,\mathcal L_\rho)$ is non-zero if and only if the gradient of $W$ evaluated at $(L_c,\mathcal L_\rho)$ is zero.

Toric manifolds possess another natural Lagrangian submanifold, namely the real part--the fixed point set of complex conjugation. When the toric manifold is Fano the real Lagrangian is an example of the fixed point set of an anti-symplectic involution in a spherically positive symplectic manifold. These have been studied in \cite{fooo8}, where it is shown that such a Lagrangian is unobstructed (over $\Z_2$) and the Floer cohomology is isomorphic to its singular cohomology.

In this article, we investigate the conditions under which the Floer cohomology between a torus fiber $L_c$ and the real Lagrangian $R$ is well-defined. 
In case it is, we describe the Floer complex in a purely combinatorial way based on the moment polytope of the toric manifold.
The result is Theorem \ref{main} and is the main result of this paper.
An interesting feature of our construction is that we twist the usual Floer cochain complex by a locally constant sheaf $\mathcal L_\rho$ with values in $\mathbb F_2$, the algebraic closure of $\mathbb Z_2$.

The description of the Floer complex we give is obtained by counting Maslov index 1 strips that connect intersection points of $R$ and $L_c$.
This is the standard way of defining Floer cohomology.
However it applies only to monotone Lagrangian submanifolds, and $R$ and $L_c$ are not monotone.
The standard proofs therefore do not work to show that the Floer cohomology is well-defined and invariant under choice of almost complex structure and Hamiltonian isotopy.
Thus our approach to the Floer theory of $(R,L_c)$ is to use the more general definition given in \cite{fooo} and then show that still the Floer complex really does just count Maslov index 1 strips (for the standard toric complex structure).

The combinatorial description of Floer cohomology is then used to get some results on the minimal number of intersection points between $R$ and $L_c$ under Hamiltonian isotopy.
This builds on the work of \cite{alston}, as well as recovers some non-displaceability results obtained elsewhere (\cite{bc}, \cite{bc1}, \cite{ep} and \cite{t}).
For example, consider the Lagrangian submanifolds $\R P^k$ and $T^k$ (the Clifford torus) in $\cp^k$.
We prove
\begin{theorem}
Let $\phi$ be a Hamiltonian diffeomorphism such that $T^k$ and $\phi(\R P^k)$ intersect transversely. Then
$$\sharp \big( \phi ( \mathbb{R}P^{k} ) \cap T^{k} \big) \geq 2^{ \big\lceil \frac{k}{2} \big\rceil }. $$
\end{theorem}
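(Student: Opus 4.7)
\medskip

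\noindent\textbf{Proof proposal.}
The strategy is to identify $T^k$ with a distinguished torus fiber of $\cp^k$, apply Theorem \ref{main} to deduce non-vanishing of $HF((T^k,\mathcal{L}_\rho),\rp^k)$, compute its total $\F_2$-rank via the combinatorial model, and invoke Hamiltonian invariance.

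First I would realize $T^k$ as the moment map fiber of $\cp^k$ over the barycenter of the standard simplex $\Delta^k$. By Cho--Oh \cite{chooh}, this is the unique fiber carrying a flat $\F_2$-line bundle $\mathcal{L}_\rho$ making $(T^k,\mathcal{L}_\rho)$ a critical point of the potential function $W$; in particular $HF(T^k,\mathcal{L}_\rho)\neq 0$. Since $\cp^k$ is Fano and $\rp^k$ its real Lagrangian, the hypotheses of Theorem \ref{main} are satisfied for the pair $(T^k,\rp^k)$, so the combinatorial complex computing $HF((T^k,\mathcal{L}_\rho),\rp^k)$ is well-defined, and by Theorem \ref{main} this Floer cohomology is non-zero.

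Next I would compute the total $\F_2$-dimension of this Floer cohomology using the combinatorial description. The intersection $T^k\cap\rp^k$ consists of the $2^k$ points $[\pm 1:\pm 1:\cdots:\pm 1]$, indexed by $v\in(\Z_2)^k$; these are the generators. Using the polytope description of Maslov index one strips, I would enumerate the contributing strips---one for each facet of $\Delta^k$, flipping exactly one coordinate of $v$---and assemble the Floer differential as an explicit linear map on $(\F_2)^{2^k}$. A careful analysis of kernel and image of this map, exploiting the evident $(\Z_2)^k$-symmetry and the character decomposition it induces on the complex, should yield total rank exactly $2^{\lceil k/2\rceil}$.

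Finally, since Floer cohomology is invariant under Hamiltonian isotopy (a further consequence of Theorem \ref{main}) and the number of transverse intersection points bounds the total rank of $HF$ from above,
\[
\#\bigl(\phi(\rp^k)\cap T^k\bigr)\;\geq\;\dim_{\F_2} HF\bigl((T^k,\mathcal{L}_\rho),\rp^k\bigr)\;=\;2^{\lceil k/2\rceil}.
\]
The principal obstacle I expect is extracting the exact value $2^{\lceil k/2\rceil}$ rather than mere non-vanishing: while the $2^k$ generators and the candidate strips are easy to list, pinning down the precise cancellation pattern in the differential requires a detailed bookkeeping of the flat-bundle weights from $\mathcal{L}_\rho$ and of the Novikov valuations of each strip, best carried out by decomposing the complex according to the characters of $(\Z_2)^k$ and isolating which character subspaces are acyclic.
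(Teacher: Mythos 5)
Your strategy goes through for odd $k$ but collapses at the crucial even case, and the failure is not one of bookkeeping: for $k$ even the Floer cohomology $HF(\rp^k,(T^k,\mathcal L_\rho))$ is simply not defined, for any choice of $\rho$. Theorem \ref{main} requires the obstruction $o(T^k,\rho)=W_0(\rho)$ to vanish, whereas non-vanishing of $HF(T^k,\mathcal L_\rho)$ requires $\nabla W_0(\rho)=0$; these are different conditions and your second paragraph conflates them. For $\cp^k$ the potential at the monotone fiber is $W_0(x)=\bigl(x_1+\cdots+x_k+(x_1\cdots x_k)^{-1}\bigr)T^E$, whose critical points satisfy $x_1=\cdots=x_k=\xi$ with $\xi^{k+1}=1$, and there $W_0=(k+1)\xi T^E$. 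In characteristic $2$ this vanishes exactly when $k$ is odd. So for even $k$ every critical $\rho$ has $o(T^k,\rho)\neq0$ (the combinatorial complex does not satisfy $\delta^2=0$), while every $\rho$ with $o(T^k,\rho)=0$ is non-critical, so $HF(T^k,\mathcal L_\rho)=0$ and the pairing vanishes by the module structure. There is nothing to compute by your method when $k$ is even.

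The paper's way around this is the Abreu--Macarini product trick: pass to $\cp^k\times\cp^k$ with the Lagrangians $\rp^k\times\rp^k$ and $T^k\times T^k$. The potential is additive, so $W_{0\times0}(\rho,\rho)=2W_0(\rho)=0$ in characteristic $2$ while $\nabla W_{0\times 0}(\rho,\rho)=0$; hence the product Floer cohomology is defined and non-zero. Proposition 5.9 computes its rank to be $2^{2\lceil k/2\rceil}$ by an induction on dimension, and applying Hamiltonian invariance to $\phi\times\phi$ yields $\bigl(\#(\phi(\rp^k)\cap T^k)\bigr)^2\geq 2^{2\lceil k/2\rceil}$. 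For odd $k$ the direct route you describe does work, with the rank $2^{(k+1)/2}$ taken from \cite{alston}. A secondary caution about your proposed rank computation: the differential lies in the group algebra $\F_2[(\Z_2)^k]$, which in characteristic $2$ is local and non-semisimple, so there is no character decomposition of the complex into eigenspaces; the paper and \cite{alston} instead extract the rank by induction on the number of coordinates.
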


We do not have a general formula for the rank of the Floer cohomology.
However, motivated by \cite{chooh} and \cite{toricfooo}, we associate a potential function $W_c$ to each torus fiber $L_c$ and then are able to show 
\begin{theorem}
The Floer cohomology of the pair $(R,(L_c,\mathcal L_\rho))$ is well-defined if and only if $W_c(\rho)=0$.
Furthermore, $HF(R,(L_c,\mathcal L_\rho)) \neq 0$ if and only if $\nabla W_c(\rho)=0$.
\end{theorem}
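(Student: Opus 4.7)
The plan is to combine the combinatorial description of the Floer complex from Theorem~\ref{main} with the fact, proved in \cite{fooo8}, that the real Lagrangian $R$ is strictly unobstructed. The two equivalences are handled in turn, and both reduce to a careful reading of information encoded in the moment polytope.

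For the first equivalence, I would rely on the Cho--Oh/FOOO framework: $(L_c,\mathcal L_\rho)$ is weakly unobstructed with curvature $m_0 = W_c(\rho)\cdot \mathbf{1}_{L_c}$, while $R$ is unobstructed by \cite{fooo8}, so its potential vanishes. The obstruction to $(m_1)^2 = 0$ on $CF\bigl(R,(L_c,\mathcal L_\rho)\bigr)$ is then the scalar difference of potentials, which here equals $-W_c(\rho)$ times the identity. Therefore the Floer differential squares to zero if and only if $W_c(\rho)=0$, giving the first statement; this is the pair-of-Lagrangians analogue of the argument used in \cite{toricfooo} for two torus fibers.

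For the second equivalence, assume $W_c(\rho)=0$ and analyze the combinatorial complex produced by Theorem~\ref{main}. The guiding principle is that a Maslov index $1$ holomorphic strip between $R$ and $L_c$ through a point $p\in R\cap L_c$ doubles, under the anti-symplectic involution defining $R$, into a Maslov index $2$ holomorphic disk bounded by $L_c$ passing through $p$, and these are precisely the disks counted by $W_c$. Grouping the strips and weighting by the holonomy of $\mathcal L_\rho$, I would aim to identify $m_1$ with a Koszul-type operator given by contraction against $\nabla W_c(\rho)$ on an exterior algebra whose total dimension matches $|R\cap L_c|$. The theorem then follows from two standard facts about Koszul differentials: they are acyclic when the contracted vector is nonzero (so $HF=0$ when $\nabla W_c(\rho)\neq 0$), and identically zero when it vanishes (so $HF$ has maximal, hence non-zero, rank when $\nabla W_c(\rho)=0$). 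The main obstacle is making this Koszul identification precise: one must match the polytope-based strip counts of Theorem~\ref{main} with the disk counts entering $W_c$ through the doubling correspondence, control this correspondence near the facets of the polytope where holomorphic disks can degenerate, and track exactly how the holonomies of $\mathcal L_\rho$ assemble the partial derivatives $\partial W_c/\partial \rho_i$ into the entries of $m_1$.
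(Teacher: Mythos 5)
Your treatment of the first equivalence is essentially the paper's: well-definedness reduces to $\delta^2=0$, and $\delta^2=\bigl(\mathfrak P(R,b_R)+\mathfrak P(L_c,\rho,b_c)\bigr)\mathrm{id}=o(L_c,\rho)\,\mathrm{id}=W_c(\rho)\,\mathrm{id}$, using the strict unobstructedness of $R$ from \cite{fooo8} and the computation of $\mathfrak P(L_c,\rho,b_c)=\sum_j\rho^{v_j}T^{E_j}$. No objection there.

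The second equivalence is where your argument has a genuine gap: the identification of $\delta$ with a Koszul contraction against $\nabla W_c(\rho)$ is false, and with it both ``standard facts'' you invoke. In the basis of Theorem \ref{main} the differential is $\delta=\sum_j\sqrt{\rho^{v_j}}\,T^{E_j/2}f_j$ with $f_j$ the translation by $v_j$ mod $2$ on $(\Z_2)^n$; in the group algebra $\unovf[(\Z_2)^n]\cong\unovf[t_1,\dots,t_n]/(t_i^2)$ each $f_j$ acts as $\prod_i(1+t_i)^{v_j^i}$, so $\delta$ has components in all degrees and only its linear part is controlled by $\nabla W_c(\rho)$. In particular $\delta$ need \emph{not} vanish at a critical point of $W_c$: in the paper's worked example (the monotone blow-up of $\C P^3$ at a point) one has $W_c(\rho)=0$ and $\nabla W_c(\rho)=0$, yet the $8\times 8$ differential has rank $2$ and $HF\cong(\unovf)^4$, not $(\unovf)^8$ --- the paper explicitly notes that no general rank formula is available. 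The correct argument for ``$\nabla W_c(\rho)=0\Rightarrow HF\neq0$'' is Theorem \ref{nonzero}: an induction on $n$ that splits $C_{n+1}=B_0\oplus B_1$ by the last coordinate, writes $\delta$ as a $2\times2$ block matrix in commuting operators $X_0,X_1$ whose squares vanish precisely because of the conditions $W_c(\rho)=0$ and $\nabla W_c(\rho)=0$, and derives a contradiction from acyclicity using the inductive non-vanishing of $H^*(C_n,X_0+X_1)$. For the converse direction the paper does not analyze the complex at all: it uses that $HF(R,(L_c,\mathcal L_\rho))$ is a unital module over $HF(L_c,\mathcal L_\rho)$, which by the Cho--Oh computation vanishes whenever $Z=\nabla W_c(\rho)\neq0$. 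You would need to replace the Koszul step with arguments of this kind.
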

This fact explains why we use values in $\mathbb F_2$ rather than $\mathbb Z_2$: $\mathbb Z_2$ is usually not big enough to find solutions to the equations $W_c=0, \nabla W_c=0$.
As mentioned before, it is shown in \cite{chooh} that the condition $\nabla W_c(\rho)=0$ is equivalent to the non-vanishing of $HF(L_c,\mathcal L_\rho)$.
Therefore Theorem 1.2 implies that, when defined, the cohomology $HF(R,(L_c,\mathcal L_\rho)) \neq 0$ if and only if $HF(L_c,\mathcal L_\rho) \neq0$.

When $\nabla W_c(\rho)=0$ but $W_c(\rho) \neq 0$, the Floer cohomology $HF(R,(L_c,\mathcal L_\rho))$ is not defined. Nevertheless, we can still show that $R$ and $L_c$ are non-displaceable. Following an idea of Miguel Abreu and Leonardo Macarini we consider the product of the toric manifold with itself. Then $HF(R \times R, (L_c\times L_c,\mathcal L_\rho\oplus\mathcal L_\rho))$ is well defined and we obtain
\begin{theorem}\label{product}
Suppose there exists a locally constant sheaf $\mathcal L_\rho$ such that $\nabla W_c(\rho)=0$ and let $\phi$ be a Hamiltonian diffeomorphism such that $\phi(R)$ and $L_c$ intersect transversely. Then
$$ \sharp \big( \phi(R) \cap L_c \big) \geq \sqrt{rank (HF(R \times R, (L_c\times L_c,\mathcal L_\rho \oplus \mathcal L_\rho)))} > 0. $$
\end{theorem}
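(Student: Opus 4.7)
The plan is to exploit the product construction suggested just before the statement. Let $X$ denote the ambient Fano toric manifold. Then $X \times X$ is again Fano toric, with moment polytope the product polytope, real Lagrangian $R \times R$, torus fiber $L_c \times L_c = L_{(c,c)}$, and flat sheaf $\mathcal L_\rho \oplus \mathcal L_\rho$ corresponding to a character of the product torus. I would first verify that the main results of this paper (Theorem \ref{main} and Theorem 1.2) apply to this product datum just as they do to $(R,L_c)$ in a single Fano toric manifold.

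Assuming that, the key observation is a K\"unneth-type splitting of the potential. Holomorphic Maslov index $2$ disks in $X \times X$ with boundary on $L_c \times L_c$ split as a Maslov index $2$ disk in one factor paired with a constant disk in the other (a non-constant disk on a torus fiber in a Fano toric manifold has Maslov index at least $2$, so the index distributes only as $2+0$). Counting them weighted by $\mathcal L_\rho \oplus \mathcal L_\rho$ holonomy gives
\begin{equation*}
W^{X \times X}_{(c,c)}(\rho,\rho) \;=\; W_c(\rho) + W_c(\rho),
\end{equation*}
which vanishes identically because we work with $\mathbb F_2$ coefficients. By Theorem 1.2 in the product, $HF\bigl(R\times R,(L_c\times L_c,\mathcal L_\rho\oplus\mathcal L_\rho)\bigr)$ is therefore well-defined. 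Its gradient is $(\nabla W_c(\rho),\nabla W_c(\rho))$, which is zero by the hypothesis $\nabla W_c(\rho)=0$, so the same theorem gives that this Floer cohomology has strictly positive rank. This establishes the rightmost inequality of the statement.

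For the left inequality I would test non-displaceability in the product against the Hamiltonian diffeomorphism $\phi \times \phi$. Since $\phi(R)$ meets $L_c$ transversely in $X$, the diffeomorphism $\phi\times\phi$ sends $R\times R$ to $\phi(R)\times\phi(R)$ meeting $L_c\times L_c$ transversely, and the intersection points are exactly pairs of intersection points:
\begin{equation*}
\sharp\bigl((\phi\times\phi)(R\times R)\cap(L_c\times L_c)\bigr) \;=\; \sharp\bigl(\phi(R)\cap L_c\bigr)^{2}.
\end{equation*}
Hamiltonian invariance of Floer cohomology then gives
\begin{equation*}
\mathrm{rank}\bigl(HF(R\times R,(L_c\times L_c,\mathcal L_\rho\oplus\mathcal L_\rho))\bigr) \;\leq\; \sharp\bigl(\phi(R)\cap L_c\bigr)^{2},
\end{equation*}
and taking square roots yields the claim. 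The main obstacle I anticipate is not the inequality itself but the justification that Theorem \ref{main} and Theorem 1.2, together with the combinatorial description of the Floer complex from the polytope, transport cleanly to the product toric manifold, and that the potential really splits as above; once this is in place the rest is a formal consequence of the product trick and Hamiltonian invariance.
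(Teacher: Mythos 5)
Your proposal is correct and follows essentially the same route as the paper: the paper likewise passes to $X_P\times X_P$ with polytope $P\times P$, uses $W_{c\times c}(x,y)=W_c(x)+W_c(y)$ so that $W_{c\times c}(\rho,\rho)=0$ in characteristic $2$ while $\nabla W_{c\times c}(\rho,\rho)=0$, and then tests Hamiltonian invariance against $\psi=\phi\times\phi$ to get $\mathrm{rank}\,HF \leq \bigl(\sharp(\phi(R)\cap L_c)\bigr)^2$. The only difference is cosmetic: the paper obtains the splitting of the potential directly from its combinatorial formula over the facets of $P\times P$, whereas you rederive it from the Maslov index $2+0$ decomposition of disks, which amounts to the same thing.
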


The fact that $\mathbb F_2$ is algebraically closed allows us to obtain one more result.
\begin{corollary}
Suppose $X_P$ is monotone and $L_0$ is the unique monotone torus fiber. Then there exists $\mathcal L_\rho$ such that $\nabla W_0(\rho)=0$. Therefore the Lagrangians $R$ and $L_0$ are non-displaceable, i.e. for any Hamiltonian diffeomorphism $\phi$,
$$\phi(R) \cap L_c \neq \emptyset .$$
\end {corollary}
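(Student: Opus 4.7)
The strategy is to produce a character $\rho \in (\mathbb F_2^\times)^n$ with $\nabla W_0(\rho)=0$ and then apply Theorem \ref{product} to the doubled toric manifold $X_P\times X_P$. Direct application of Theorem 1.2 on $X_P$ itself is not available here: the corollary only asserts $\nabla W_0(\rho)=0$, not $W_0(\rho)=0$, so $HF(R,(L_0,\mathcal L_\rho))$ need not even be defined. The doubling idea built into Theorem \ref{product} is precisely what bypasses this.

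In the monotone case every holomorphic disk bounding $L_0$ has the same symplectic area, so after absorbing the Novikov parameter the potential reduces to the Laurent polynomial
\begin{equation*}
W_0(y_1,\ldots,y_n) \;=\; \sum_{j=1}^{m} y^{v_j},
\end{equation*}
where $v_1,\ldots,v_m$ are the primitive inward normals to the facets of $P$. The critical point equations $y_i\partial_i W_0 = \sum_j v_{j,i}\,y^{v_j} = 0$ for $i=1,\ldots,n$ form a system of $n$ Laurent equations in $n$ torus variables. To show that it has a solution in the algebraically closed field $\mathbb F_2$, it suffices by the Nullstellensatz to prove that the Jacobian ring $J = \mathbb F_2[y_1^{\pm},\ldots,y_n^{\pm}]/(y_1\partial_1 W_0,\ldots,y_n\partial_n W_0)$ is non-zero. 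I would do this via a Bernstein--Kushnirenko--Khovanskii mixed-volume argument: all $n$ equations share (a translate of) the dual polytope $P^\vee$ as Newton polytope, and the BKK bound $n!\cdot\mathrm{Vol}(P^\vee)$ is positive; a non-degeneracy check specific to monotone Fano toric polytopes shows the bound is attained in characteristic $2$ as well.

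With $\rho$ in hand, one passes to $X_P\times X_P$, which is still monotone Fano toric, with real Lagrangian $R\times R$ and unique monotone torus fiber $L_0\times L_0$ carrying the local system $\mathcal L_\rho\oplus\mathcal L_\rho$. The superpotential of the product factors as $W_{P\times P}(y,y') = W_0(y)+W_0(y')$, and at the point $(\rho,\rho)$ both its gradient and its value $W_{P\times P}(\rho,\rho)=2W_0(\rho)$ vanish, the latter because we work in characteristic $2$. Theorem 1.2 applied to $X_P\times X_P$ then asserts that $HF(R\times R,(L_0\times L_0,\mathcal L_\rho\oplus\mathcal L_\rho))$ is defined and non-zero, and Theorem \ref{product} concludes that $\phi(R)\cap L_0\neq\emptyset$ for every Hamiltonian $\phi$ making the intersection transverse, with the non-transverse case handled by a standard small perturbation.

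The principal obstacle is the algebraic step: ensuring that the critical locus of $W_0$ remains non-empty after reduction modulo $2$. In characteristic zero its cardinality is $\chi(X_P)$ by mirror-symmetric considerations, but a priori the locus can collapse or become embedded after base change to $\mathbb F_2$. The cleanest rescue is either a genuinely characteristic-free BKK/Kushnirenko count verifying non-degeneracy of the relevant initial forms intrinsically, or a flatness argument for the Jacobian scheme over $\mathrm{Spec}\,\mathbb Z$ at the prime $2$. This is where the hypothesis of monotonicity (and the attendant combinatorics of the moment polytope) is expected to enter in an essential way.
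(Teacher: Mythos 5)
Your overall architecture is right and matches the paper: you correctly identify that $W_0(\rho)$ need not vanish, so one cannot invoke Theorem 1.2 on $X_P$ directly, and that the doubling trick of Theorem \ref{product} (using $W_{c\times c}(\rho,\rho)=2W_0(\rho)=0$ in characteristic $2$) is what closes the argument once a critical point $\rho$ of $W_0$ exists. The gap is in the one step that carries all the content: producing the critical point over $\mathbb F_2$. Your BKK sketch does not go through as stated. First, the critical equations are $y_i\partial_i W_0=\sum_j v_j^i\,y^{v_j}$ with coefficients $v_j^i$ reduced mod $2$; these coefficients frequently vanish, so the Newton polytopes of the $n$ equations are in general \emph{proper} subpolytopes of $\mathrm{conv}\{v_j\}$ and need not coincide, which already undermines the claim that each equation has Newton polytope $P^\vee$. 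Second, and more seriously, positivity of the mixed volume only guarantees solutions in the torus for \emph{generic} coefficients; here the coefficients are the specific integers $v_j^i\bmod 2$, and the ``non-degeneracy check specific to monotone Fano toric polytopes'' that you defer is exactly the assertion to be proved. Your alternative suggestion (flatness of the Jacobian scheme over $\mathrm{Spec}\,\mathbb Z$ at $2$) is likewise not carried out and is not obviously true from the combinatorics alone.

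The paper closes this gap by a different and essentially algebra-free route: writing $W_0=T^E w_0$, it observes that non-existence of a critical point of $w_0$ over the algebraically closed field $\mathbb F_2$ would (by the Nullstellensatz) force $1\in\langle\partial_i w_0\rangle$, hence $Jac(W_0)=0$; but by Proposition 6.8 of \cite{toricfooo} the Jacobian ring $Jac(W_0)$ is isomorphic to the Batyrev quantum cohomology $QH^{\omega}(X_P,\unovf)$, which is non-trivial. This is where monotonicity and the toric structure actually enter, rather than through a mixed-volume count. If you want to salvage your approach, you would need to either prove the characteristic-$2$ non-degeneracy of the facial systems directly (which seems hard and is not known to you), or simply replace that step with the quantum-cohomology identification of the Jacobian ring as the paper does.
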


In \cite{toricfooo} the authors consider a similar potential function $W_c$ whose domain includes the space of all weak bounding cochains in $L_c$. 
The additional flexibility gained by considering all weak bounding cochains allows them to show that if $X_P$ is Fano then there for at least one torus fiber $L_c$ the function $W_c$ has a critical point.

We briefly describe the organization of this article.
Section 2 contains a brief summary of the construction of a toric manifold, and fixes some important notation.
Section 3 reviews classical (monotone) Lagrangian Floer theory, and discusses the difficulties of defining $HF(R,L_c)$ in this context.
In Section 4 we give the combinatorial description of the Floer complex, and in Section 5 we give some applications.
Section 6 contains a review of Floer theory as developed in \cite{fooo}, and finally in Section 7 we prove that our description of the Floer complex given in Section 4 agrees with the one defined in Section 6.

\vspace{.5cm}

\textbf{Acknowledgements}  We would like to thank Miguel Abreu from whom we learned the argument used to prove Theorem \ref{product}.
We would also like to thank Yong-Geun Oh for many helpful discussions on this topic.
We also thank the referees for the extensive and helpful comments.
The second author was supported by FCT-Portugal, research grant SRFH/BD/30381/2006.

\section{Compact Toric Manifolds}

This section contains a brief review of toric manifolds, mainly to establish notation. We follow \cite{toricfooo} closely.

Consider the convex polytope $P$ in $\R^n$ defined by
$$\set{u \in \Mr}{\langle u,v_j \rangle \geq \lambda_j , j=1,...,m},$$
where $\langle \cdot,\cdot \rangle$ is the dot product in $\R^n$ and $v_j \in \R^n$ are the inward normal vectors to the facets of $P$. 
Denote the components of the $v_j$'s as
\begin{equation}\label{eq:components}
v_j=(v^{1}_{j}, \ldots,v^{n}_{j}).
\end{equation}
Let $e_1,\ldots,e_n$ be the standard basis of $\R^n$.
To simplify the notation, we will assume (without loss of generality) that 
\begin{equation}\label{fan}
v_j=e_j,\,\lambda_j=0\textrm{ for $j=1,\ldots,n$.}
\end{equation}

Let $\pi: \Z^m \rightarrow \Z^n$ be the map that sends $e_i$ to $v_i$ for $i=1,\ldots,m$. 
This gives rise to the exact sequence
\begin{equation}\label{exactseq}
0 \rightarrow \mathbb{K}=\Ker(\pi) \rightarrow \Z^m \stackrel{\pi}{\rightarrow} \Z^n \rightarrow 0.
\end{equation}
which induces another exact sequence
\begin{equation}\label{exactseq2}
0 \rightarrow K \rightarrow \R^m/\Z^m \rightarrow \R^n/\Z^n \rightarrow 0.
\end{equation}
Consider $\C^m$ with the usual symplectic form $\frac{\sqrt{-1}}{2}\sum^{m}_{i=1}{dz_i \wedge d\bar{z}_i}$.
The standard action of the real torus $T^m$ on $\C^m$ is Hamiltonian and its moment map is the map
\begin{equation}
\mu(z_1, \ldots,z_m)=\frac{1}{2}\big(|z_1|^2, \ldots ,|z_m|^2 \big).
\end{equation}
Restricting to $K$ gives a Hamiltonian action of $K$ on $\C^m$ with moment map
\begin{equation}\label{muK}
\pi_K(z_1, \ldots,z_m)=\frac{1}{2} \Big( \sum^{n}_{j=1}{v^{j}_{n+1}|z_j|^2}-|z_{n+1}|^2, \ldots ,\sum^{n}_{j=1}{v^{j}_{m}|z_j|^2}-|z_{m}|^2 \Big) \in \R^{m-n},
\end{equation}
where we are identifying $\R^{m-n}$ with the dual of the Lie algebra of $K$ via the following basis of $\mathbb{K}$: 
\begin{equation}\label{Kbasis}
Q_{i-n}=(v^{1}_{i}, \ldots, v^{n}_{i},0, \ldots , -1, \ldots, 0), \ i=n+1, \ldots, m.
\end{equation}
Then for any regular $r \in \pi_K(\C^m)\subset \R^{m-n}$, $$X_P = \pi^{-1}_K(r)/K$$ is a smooth compact manifold that inherits a symplectic form via symplectic reduction.
The following theorem is standard (see \cite{guillemin}):
\begin{theorem} 
\begin{enumerate}
	\item Let $$r_a=(\lambda_{n+1}, \ldots,\lambda_{m})$$
and let $\pi_T$ be the moment map of the residual $T^n=T^m/K$ action on $X_P$, i.e. $\pi_T$ is the map induced from
\begin{equation}\label{muT}
\tilde{\pi}_T(z_1, \ldots,z_m)=\frac{1}{2}\big(|z_1|^2, \ldots ,|z_n|^2 \big), 
\end{equation}
defined on $\pi^{-1}_{K}(r_a)$.

Then the image of $\pi_T$ is $P$, that is
	 $$\pi_T(X_P)=P.$$
	\item $X_P$ is a K\"ahler  manifold with complex structure induced from $\C^m$.
\end{enumerate}
\end{theorem}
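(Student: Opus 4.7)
The theorem is the standard description of $X_P$ as a Delzant-type symplectic quotient, so my plan is to handle part (1) by direct computation and invoke standard symplectic/K\"ahler reduction for part (2).

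For part (1), I would first observe that $\tilde{\pi}_T$ depends only on the moduli $|z_j|^2$ and so is invariant under the phase action of $T^m$, hence in particular under the subgroup $K$. Thus $\tilde{\pi}_T|_{\pi_K^{-1}(r_a)}$ descends to a well-defined map $\pi_T : X_P \to \R^n$. To identify it with the moment map of the residual $T^n = T^m/K$ action, I would appeal to the standard functoriality of moment maps under symplectic reduction: the $T^m$-moment map $\mu$ restricted to $\pi_K^{-1}(r_a)$ and composed with the quotient $(\R^m)^* \to (\R^m)^*/\mathrm{Lie}(K)^{\circ}$ gives the $T^n$-moment map, and under the identification of $\R^n$ with $(\R^m/\mathrm{Lie}(K))^*$ using the splitting coming from (\ref{fan}), this composition agrees precisely with the formula (\ref{muT}).

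Next I would prove $\pi_T(X_P) = P$ by exhibiting explicit preimages. Given $u \in P$, set $|z_j|^2 = 2u_j$ for $j = 1,\ldots,n$; since $v_j = e_j$ and $\lambda_j = 0$, the defining inequalities of $P$ give $u_j \geq 0$, so this is consistent. Using the basis (\ref{Kbasis}) together with the formula (\ref{muK}), the condition $\pi_K(z) = r_a$ becomes
$$|z_i|^2 = \sum_{j=1}^{n} v_i^j |z_j|^2 - 2\lambda_i = 2\big(\langle u, v_i\rangle - \lambda_i\big), \quad i = n+1,\ldots, m,$$
which is non-negative exactly because $u \in P$. So any choice of phases yields a point in $\pi_K^{-1}(r_a)$ mapping to $u$. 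Conversely, any $z \in \pi_K^{-1}(r_a)$ has image $u = \tilde{\pi}_T(z)$ satisfying $u_j \geq 0$ and $\langle u, v_i\rangle = \lambda_i + \tfrac{1}{2}|z_i|^2 \geq \lambda_i$, so $u \in P$. This computation is the core of the theorem; everything else is bookkeeping.

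For part (2), I would just invoke the K\"ahler reduction theorem of Guillemin--Sternberg: the standard K\"ahler structure on $\C^m$ is preserved by $T^m$ and hence by the compact subgroup $K$, and $r_a$ is a regular value on which $K$ acts (locally) freely by the Delzant condition built into the normal vectors $v_j$. Under these hypotheses the symplectic quotient $X_P = \pi_K^{-1}(r_a)/K$ inherits a compatible complex structure (and a K\"ahler metric) from $\C^m$. The only real obstacle here is the regularity/freeness of the action; this is guaranteed by the smoothness assumptions on the polytope $P$, and I would simply cite \cite{guillemin} rather than reprove it.
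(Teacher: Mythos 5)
Your proposal is correct, and in fact the paper offers no proof at all: it labels the theorem as standard and simply cites Guillemin. Your argument --- descending $\tilde\pi_T$ to the quotient, solving $\pi_K(z)=r_a$ explicitly via $|z_i|^2 = 2(\langle u,v_i\rangle-\lambda_i)$ for $i>n$ to show the image is exactly $P$, and invoking K\"ahler reduction for part (2) --- is precisely the standard computation that citation stands in for, and it is carried out correctly.
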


Now, recall the notion of homogeneous coordinates:
Points in $X_P$ are equivalence classes of elements of $\pi^{-1}_{K}(r_a)\subset \C^m$, hence can be denoted as
\begin{displaymath}
[z_1,\ldots,z_m]
\end{displaymath}
for complex numbers $z_i$.
The equivalence relation comes from the action of $K$: for any non-zero complex number $z$ with $|z|=1$ and tuple of integers $(k_1,\ldots,k_m)\in\mathbb K$, we have
\begin{displaymath}
[z_1,\ldots,z_m]=[z^{k_1}z_1,\ldots,z^{k_m}z_m].
\end{displaymath}

Finally, let $\tilde{\tau}:\C^m\to\C^m$ be the usual complex conjugation map. $\tilde{\tau}$ descends to an anti-holomorphic involution $\tau:X_P\to X_P$. 
Let 
\begin{displaymath}
R_P=\set{x\in X_P}{\tau(x)=x}.
\end{displaymath}
$R_P$ is the real Lagrangian in $X_P$.
 For each $c \in \textrm{int}(P)$, let 
\begin{displaymath}
L_c=\pi^{-1}_{T}(c)
\end{displaymath}
 be the Lagrangian torus fiber over $c$. Note that $L_c$ is preserved by the map $\tau$ defined above.
The Lagrangians we will study are $R_P$ and $L_c$.

\section{Classical Floer cohomology}

In this section we will briefly review classical (monotone) Lagrangian Floer cohomology and then discuss how the Floer cohomology of the pair $(R_P,L_c)$ fits into this context.
It turns out that if the classical definition of Floer cohomology is used, then the geometry of the pair of Lagrangians $(R_P,L_c)$ can be exploited to admit a particularly nice combinatorial description of the cochain complex, just as in \cite{alston}.
This description will be given in the next section.
However, since $L_c$ is not monotone, standard methods cannot be used to show that the cohomology of the complex is invariant under the choice of almost complex structure.
The only way around this problem seems to be to apply the Lagrangian Floer theory machinery developed in \cite{fooo}.
The details of this analysis will be relegated to Sections 6 and 7.

We turn now to a quick review of classical Floer cohomology.
Let $(M,\omega)$ be a symplectic manifold, $J$ a compatible almost complex structure, and $L_0$ and $L_1$ two transversely intersecting Lagrangian submanifolds.
The Floer cochain complex is 
\begin{displaymath}
C(L_0,L_1)=\bigoplus_{p\in L_0\cap L_1}\Z_2\cdot p.
\end{displaymath}
Given $p,q\in L_0\cap L_1$, let 
\begin{displaymath}
\mathcal M(p,q:\mu=1)
\end{displaymath}
denote the set of $J$-holomorphic strips $u:\R\times[0,1]\to M$ of Maslov index 1 that connect $p$ to $q$ and have bottom boundary on $L_0$ and top boundary on $L_1$.
The Floer differential
\begin{displaymath}
\delta:C(L_0,L_1)\to C(L_0,L_1)
\end{displaymath}
is given by the formula
\begin{displaymath}
\delta(p)=\sum_{q\in L_0\cap L_1}n(p,q)q,
\end{displaymath}
where $n(p,q)$ is the mod 2 number of components of $\mathcal M(p,q:\mu=1)$ (that is, $n(p,q)$ is the number of isolated holomorphic strips that connect $p$ to $q$).

The Floer cohomology $HF(L_0,L_1)$ of the pair $(L_0,L_1)$ is then the cohomology of the complex $(C(L_0,L_1),\delta)$.
Of course, for this definition to be sensible several things need to be shown:
\begin{enumerate}
\item $\delta$ is well-defined, i.e. $n(p,q)$ is finite for all $p$ and $q$,
\item $CF(L_0,L_1)$ is a cochain complex, i.e. $\delta^2=0$,
\item the cohomology does not depend on the choice of the almost complex structure $J$, and
\item the cohomology is invariant under Hamiltonian diffeomorphisms.
\end{enumerate}
These properties can be interpreted as compactness properties of the moduli space of strips:
Item 1 is equivalent to the compactness of $\mathcal M(p,q:\mu=1)$, item 2 is equivalent to the compactness of $\mathcal M(p,q:\mu=2)$ modulo broken trajectories, and items 3 and 4 are equivalent to the compactness of a parameterized moduli space modulo broken trajectories.

Non-compactness of the moduli spaces is a result of disc bubbling; the standard way to ensure compactness is to impose topological conditions that minimize the disc bubbling that can occur.
The two standard topological notions that come into play are monotonicity and the minimal Maslov number:
Let $\mu:\pi_2(M,L_i)\to\Z$ denote the Maslov index homomorphism.
Then $L_i$ is called monotone if there exists a positive constant $c$ such $\mu(u)=c\int u^*\omega$ for all $u\in\pi_2(M,L_i)$, and the minimal Maslov number $\Sigma_{L_i}$ of $L_i$ is the positive generator of the image of $\mu$.

In \cite{oh}, it is shown that if the Lagrangian submanifolds are monotone, and $\Sigma_{L_i}\geq 3$ for $i=1,2$, then the Floer cohomology is well-defined and does not depend on the choice of a generic almost complex structure.
The requirement $\Sigma_{L_i}\geq 3$ can be relaxed to $\Sigma_{L_i}\geq 2$ if an additional property is satisfied.
The additional property is the following:
Let $\mathcal M_1(L_i:\mu=2)$ denote the moduli space of $J$-holomorphic discs of Maslov index 2 with one marked (boundary) point and boundary lying on $L_i$.
The fact that $L_i$ is monotone implies that $\mathcal M_1(L_i:\mu=2)$ is a closed manifold of dimension $\dim L_i$; thus the mod 2 intersection number $$o(L_i):=[\mathcal M_1(L_i:\mu=2)]\cdot [p] \textrm{ mod }2$$ is well-defined and does not depend on the choice of the generic point $p\in L_i$.
The additional property needed is $o(L_1)=o(L_2)$.
That is, if $o(L_1)=o(L_2)$ then the Floer cohomology of the pair $(L_0,L_1)$ is well-defined and does not depend on the choice of generic $J$.

Note that the difficulty in the case $\Sigma_{L_i}=2$ arises only in trying to prove $\delta^2=0$.
The reason is that the moduli space $\mathcal M(p,q:\mu=2)$ can be non-compact due to disc bubbling; in fact it is straightforward to check that 
$$\delta^2(p)=(o(L_1)+o(L_2))\cdot p$$
(see \cite{alston} for a detailed explanation of this).
Thus the condition $o(L_1)=o(L_2)$ is needed to get $\delta^2=0$.

\bigskip

Now that we have outlined the general Floer theory of monotone Lagrangian submanifolds, we will examine how the torus fibers $L_c$ and the real Lagrangian $R_P$ fit into this framework.
The first thing to notice is that in general $L_c$ and $R_P$ are not monotone.
This fact is mitigated to some extent by our assumption that the ambient toric manifold $X_P$ is Fano.
With the standard complex structure $J$ we get a positivity property (weaker than monotonicity), namely holomorphic discs with boundary on $L_c$ have Maslov index at least 2, and holomorphic discs with boundary on $R_P$ have positive Maslov index (also at least 2 if $R_P$ is orientable, assume this is the case for now).

We can mimic the construction of Floer cohomology given above for the pair $(R_P,L_c)$ and then see what properties it has.
The first problem encountered is that the numbers $n(p,q)$ may not be finite (actually they are, but this is not a priori true) so the coefficient ring should be replaced by a Novikov ring, say
$$ \unov= \Big\{ \sum^{\infty}_{i=1}{a_iT^{\lambda_i}} \Big| \ a_i \in \Z_2, \lambda_i \in \R, \  \lambda_i \rightarrow \infty \Big\}. $$
Then the cochain complex is
\begin{displaymath}
C(R_P,L_c)=\bigoplus_{p\in L_0\cap L_1}\unov\cdot p
\end{displaymath}
and the map $\delta:C(R_P,L_c)\to C(R_P,L_c)$ is given by
\begin{displaymath}
\delta(p)=\sum_{q\in L_0\cap L_1}\sum_{E>0}\#(\mathcal M(p,q:\mu=1:\omega=E))T^Eq,
\end{displaymath}
where $\omega=E$ denotes that the symplectic area of the strip is $E$.
If the strips are regular (that is, the linearized Cauchy-Riemann operator is surjective), then $\delta$ is well-defined.

The next thing to consider is whether or not $\delta^2=0$.
Examination of the proofs in the monotone case shows that the obstruction again comes only from Maslov index 2 discs. 
However, since the coefficient ring is the Novikov ring, the obstructions $o(L_c)$, $o(R_P)$ need to be weighted with the symplectic area.
That is,
\begin{displaymath}
o(L_c):=\sum_{E>0} ([\mathcal M_1(L_c:\mu=2:\omega=E)]\cdot[p])T^E
\end{displaymath}
where $p\in L_c$ is a generic point.
A similar definition holds for $R_P$.
Assume now that Maslov index 2 discs are regular, and the evaluation map is transverse to $L_c\cap R_P$.
Then if $o(L_c)=o(R_P)$ and Maslov index 2 strips are regular, it follows that $\delta^2=0$.
Thus the cohomology of the complex is well-defined.

However, problems arise when trying to show that the cohomology does not depend on $J$ and is invariant under Hamiltonian diffeomorphism.
Consider invariance on $J$ first.
For a generic $J$ it is not necessarily true that $\delta^2=0$ because Maslov index $0$ discs may exist, and hence the proof given above that $\delta^2=0$ does not work.
Moreover, even if $\delta^2=0$, there is no reason that the cohomology has to be the same, because the parameterized moduli spaces used to construct chain maps and chain homotopies can also exhibit Maslov index $0$ disc bubbling.
(This explains why the positivity we get for the standard complex structure is weaker than monotonicity--it is not a topological positivity property, but rather a positivity property that depends on the discs that exists for the standard $J$.)
Trying to prove invariance under Hamiltonian diffeomorphism runs into the same problem:
Once you move one of the Lagrangians, it is possible for Maslov index 0 discs to exist, and the proofs break down.

There does not seem to be an easy way to deal with these problems other than to use the full Floer theory machinery developed in \cite{fooo}.
Therefore, our approach to the Floer theory of $(R_P,L_c)$ is the following:
We will use the definition of Floer cohomology given in \cite{fooo}, and appeal to the theory therein to claim that the Floer cohomology is well-defined, and is invariant under Hamiltonian deformation.
Actually, since we will use $\Z_2$ coefficients, we have to appeal to the theory in \cite{fooo8}, which is the extension of \cite{fooo} to the $\Z$ coefficient case (which holds for $\Z_2$ as well).
The dependence on the complex structure $J$ then becomes a more delicate matter.
To use $\Z$ coefficients we have to avoid non-trivial isotropy in the Kuranishi structures on the moduli spaces, and to do this we need to restrict to the class of spherically positive complex structures.
The Floer cohomology then depends only on the components of the set of all spherically positive complex structures (i.e. if $J_1$ and $J_2$ are in the same component then they have the same Floer cohomology).
The standard complex structure is spherically positive, so this restriction does not cause any problems for us.
Using the full Floer machinery also allows us to drop the assumption that every holomorphic disc with boundary on $R_P$ has Maslov index at least 2.

The Floer cochain complex defined in \cite{fooo} depends on the choice of weak bounding cochains for the Lagrangian submanifolds.
We will show that for an appropriate choice of weak bounding cochains and for the standard complex structure this Floer complex is actually the complex we want it to be, namely the one defined above where we just need to count Maslov index 1 strips that connect points in $R_P\cap L_c$.
These strips will then be counted following the ideas in \cite{alston} and used to give a combinatorial description of the Floer complex and differential.
This description forms the basis of the applications we will give.

The discussion on the choice of bounding cochains and the proof that the complexes are the same are relegated to Section 7. 
Until then we will omit the bounding cochains from the notation.
The proof of the equality of the complexes will rely on several facts (in addition to general properties of $A_\infty$-algebras and bounding cochains):
Maslov index 1 
strips are regular, Maslov index 2 discs with boundary on $L_c$ are regular and the evaluation map is transverse to $R_P\cap L_c$, and $R_P$ is unobstructed.
We will discuss the first three items in this section and the next, because they also appear in the classical monotone setting.
The unobstructedness of $R_P$ is rather technical and will be discussed in Section 7.
There it will be shown also that $o(R_P)=0$; intuitively the reason is that discs with boundary on $R_P$ can be conjugated using the Schwarz reflection principle (since $R_P$ is the set of real points), and hence the number of discs is even, i.e. $0$ mod 2.

For the applications of Floer cohomology given in this paper, we will need to twist the Floer complex by a locally constant sheaf (this is analogous to twisting Floer cohomology defined over $\C$ by a flat line bundle, see \cite{cho}) and take the coefficient ring to be the Novikov ring over the algebraic closure of $\Z_2$.
Here are the details of this construction:
Let $\mathbb{F}_2$ be the algebraic closure of $\Z_2$ and let $\unovf$ be the Novikov ring over $\mathbb{F}_2$,
\begin{displaymath}
 \unovf= \Big\{ \sum^{\infty}_{i=1}{a_iT^{\lambda_i}} \Big| \ a_i \in \mathbb{F}_2, \lambda_i \in \R, \  \lambda_i \rightarrow \infty \Big\}.
\end{displaymath}
Let $\rho:\pi_1(L_c)\to \F_2^*$ be a homomorphism and consider the sheaf of locally constant sections of the $\F_2$-line bundle
\begin{displaymath}
\mathcal L_\rho=(\tilde L\times \mathbb F_2)/(x\cdot\gamma,v)\sim (x,\rho(\gamma)v)
\end{displaymath}
over $L_c$, where $\tilde L$ is the universal cover of $L$.
The vector bundle structure of $\mathcal L_\rho$ is
\begin{displaymath}
[x,v]+[x,w]=[x,v+w],\ \lambda[x,v]=[x,\lambda v].
\end{displaymath}
The twisted cochain complex is
\begin{displaymath}
C(R_P,(L_c,\mathcal L_\rho);\unovf)=\bigoplus_{p\in R_P\cap L_c}\textrm{Hom}(\mathcal L_\rho|_p,\F_2)\otimes \unovf.
\end{displaymath}
The differential $\delta$ is the same as the previously defined with the exception that the holonomy of sections around holomorphic strips needs to be taken into account.
More precisely:
Given a strip $u\in \mathcal M(p,q:\mu=1:\omega=E)$, let $par(l_u):\mathcal L_{\rho}|q\to\mathcal L_{\rho}|p$ denote parallel translation along the top boundary of $u$ (the fibers of $\mathcal L_\rho$ are discrete so a connection is not needed to define parallel translation).
Then for $\alpha\in \textrm{Hom}(\mathcal L_\rho|p,\F_2)$
\begin{displaymath}
\delta(\alpha)=\sum_{q\in R_P\cap L_c}\sum_{u\in\mathcal M(p,q:\mu=1)}(\alpha\circ par(l_u))\otimes T^{\omega(u)}.
\end{displaymath}
The well-definedness and invariance of twisted Floer cohomology mirrors the discussion given above for non-twisted Floer cohomology.
One difference, however, is that the obstruction $o(L_c)$ needs to include coefficients coming from the holonomy of $\mathcal L_\rho$.
Thus the correct definition of the obstruction is
\begin{displaymath}
o(L_c,\rho):=\sum_{\beta\in\pi_2(X,L_c),\mu(\beta)=2} ([\mathcal M_1(L_c,\beta)]\cdot[p])\textrm{hol}_{\mathcal L_\rho}(\partial\beta)T^{\omega(\beta)}.
\end{displaymath}
Note that $\mathcal L_\rho$ is a line bundle with fibers isomorphic to $\F_2$; hence $\textrm{hol}_{\mathcal L_\rho}(\cdot)$ can canonically be identified with an element of $\F_2$.
The upshot is that if $o(L_c,\rho)=0\in \unovf$ then $\delta^2=0$, i.e. the Floer cohomology of the pair $(R_P,(L_c,\mathcal L_\rho))$ is defined.

\bigskip

In the remainder of this section we will study holomorphic discs with boundary on $L_c$.
The purpose of this is two-fold: one, we need to write down a formula for $o(L_c)$, and two, the knowledge will be needed in the next section to write down a formula for $\delta$.

Recall that $P$ is the defining polytope of the toric manifold $X_P$, as in Section 2.
Let $ \partial P = \cup^{m}_{i=1} \partial_i P $ be the decomposition of the boundary of $P$ into facets. 
Let $\beta_i \in H_2(X_P, L_c)$ be the element such that
\begin{equation}
\beta_i \cap [\pi^{-1}_{T}(\partial_j P)]= \left\{ 
\begin{array}{ll}
1 , & i=j \\
0 , & i \neq j
\end{array} 
\right..
\end{equation}
It follows from Theorem 5.1 in \cite{chooh} that $\mu(\beta_i)=2$.
The following theorem was proved by Cho and Oh in \cite{chooh} under some technical assumptions which were later removed in Section 11 of \cite{toricfooo}.
\begin{theorem} \label{discs}
Let $X_P$ be a Fano toric manifold and $L_c$ a fiber of the moment map. Then:
\begin{enumerate}
	\item If $\mu(\beta) \leq 0$ and $\beta \neq 0$, then $\mathcal{M}_1(L_c, \beta)^{\bf s}= \emptyset$. Here \textbf{s} is a section of the Kuranishi structure on this moduli space (see Section 7 for further explanation).
	\item For $i=1,\ldots,m$, $\mathcal{M}_1(L_c, \beta_i)$ does not have boundary. 
	\item $ev_0:\mathcal{M}_1(L_c, \beta_i) \rightarrow L_c$ is a diffeomorphism.
	\item If $\mu(\beta)=2$ and $\beta \neq \beta_i$ for some $i$, then $\mathcal{M}_1(L_c, \beta_i)= \emptyset$.
\end{enumerate}
\end{theorem}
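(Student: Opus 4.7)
The plan is to use the toric homogeneous coordinate description to classify all holomorphic discs with boundary on $L_c$. Recall that $X_P = \pi_K^{-1}(r_a)/K$ and, under this identification, $L_c$ lifts to a product of circles $\prod_{j=1}^m S^1_{r_j} \subset \C^m$ whose radii are determined by $c$. Since $K$ acts freely on $\pi_K^{-1}(r_a)$, any $J$-holomorphic disc $u \colon (D^2,\partial D^2) \to (X_P,L_c)$ lifts to a $J$-holomorphic disc $\tilde u = (\tilde u_1,\ldots,\tilde u_m) \colon D^2 \to \C^m$ with $|\tilde u_j(e^{i\theta})| = r_j$. This lift reduces the problem to classifying holomorphic maps $D^2 \to \C$ sending $\partial D^2$ to a circle, which by Schwarz reflection are finite Blaschke products having some number $k_j \geq 0$ of zeros in the open disc.

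Once this classification is in hand, a standard computation using that $c_1(X_P)$ is Poincar\'e dual to the sum of toric divisors yields $\mu(\beta) = 2\sum_{j=1}^m k_j$, with $\beta$ expressed as $\sum_j k_j \beta_j$. Non-constant honest discs therefore satisfy $\mu(\beta) \geq 2$, proving part 1 on the top stratum, and $\mu(\beta) = 2$ forces exactly one $k_j = 1$, proving part 4. For part 3, after modding out by $\mathrm{Aut}(D^2,1)$ and the $K$-action, the moduli space $\mathcal{M}_1(L_c,\beta_i)$ is parameterized by the location of the unique zero of $\tilde u_i$ in the open disc together with the constant phases of the non-vanishing components; the resulting evaluation map to $L_c$ is then a diffeomorphism. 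Part 2 follows since the Blaschke parameter ranges freely over the open disc with no boundary stratum appearing.

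For part 1 in full generality one must extend the analysis to stable maps whose domains contain sphere or disc bubbles, and this is where the Kuranishi section $\mathbf{s}$ enters. The Fano hypothesis guarantees $c_1 > 0$ on every non-constant sphere class, and the Blaschke classification gives $\mu \geq 2$ on every non-constant disc component; hence the total Maslov index of any non-constant stable map is strictly positive, forcing $\mathcal{M}_1(L_c,\beta)^{\mathbf{s}} = \emptyset$ whenever $\mu(\beta) \leq 0$. The main obstacle is ensuring that the abstract Kuranishi perturbation $\mathbf{s}$ is chosen compatibly with these dimension bounds so that no negative virtual-dimensional strata contribute spurious zeros, which is precisely the technical content of the Cho--Oh argument refined in Section~11 of \cite{toricfooo}; the same analysis supplies the explicit Fredholm computation showing that the Blaschke discs are regular, which is what promotes the bijection of part~3 to a diffeomorphism.
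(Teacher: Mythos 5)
The paper offers no proof of this theorem: it is quoted verbatim from Cho--Oh \cite{chooh} (with the technical hypotheses removed in Section~11 of \cite{toricfooo}), and your sketch is a faithful reconstruction of exactly that argument --- Blaschke-product classification of discs in homogeneous coordinates, the Maslov formula $\mu(\beta)=2\sum_j k_j$, and positivity of index on every non-constant component to rule out the classes with $\mu\leq 0$ in the stable-map compactification. The one place where your sketch elides a genuine step is the lifting: a holomorphic disc does not lift into the moment-map level set $\pi_K^{-1}(r_a)$ (which is only a real submanifold); one must instead use the holomorphic quotient presentation $X_P=(\C^m\setminus Z)/K_{\C}$ and the holomorphic triviality of the $K_{\C}$-bundle over the contractible domain to produce a lift whose boundary components have constant modulus --- this is how Cho--Oh actually carry it out, and with that correction your outline matches the cited proof.
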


Cho and Oh \cite{chooh} also provide a complete description of discs of Maslov index 2:
\begin{theorem}
Let $u:(D^2,\partial D^2)\rightarrow(X_P,L_c)$ be a holomorphic map such that $[u]=\beta_i$. Up to reparameterization, $u$ can be written in homogeneous coordinates as
\begin{equation} \label{top}
u(z)=[c_1,\ldots,zc_i,\ldots,c_m]
\end{equation}
where $(c_1,\ldots,c_m) \in \C^m$ are such that $\pi_T(c_1,\ldots,c_m)=c$.
Moreover the moduli space $\mathcal{M}_1(L_c, \beta_i)$ is regular. 
\end{theorem}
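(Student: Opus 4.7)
The plan is to lift the disc $u: D^2 \to X_P$ to a map into $\C^m$ and exploit the simple structure of holomorphic discs in $\C$ with boundary on a circle.

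First, I would use the fact that $\pi_K^{-1}(r_a) \to X_P$ is a principal $K$-bundle and $D^2$ is simply connected. This provides a lift $\tilde u = (\tilde u_1,\ldots,\tilde u_m): D^2 \to \pi_K^{-1}(r_a) \subset \C^m$ of $u$. Since $\pi_K^{-1}(r_a)$ is a $K$-invariant complex submanifold (when viewed via the Kähler quotient construction), each component $\tilde u_j$ is a holomorphic map $D^2 \to \C$. The condition $\pi_T(u(z)) \in P$ with $u(\partial D^2) \subset L_c = \pi_T^{-1}(c)$ implies by \eqref{muT} that, along $\partial D^2$, the moduli $|\tilde u_j(z)|^2$ for $j=1,\ldots,n$ are determined by $c$; a short computation using the $K$-moment map \eqref{muK} and the basis \eqref{Kbasis} extends this to all $j$, so each $\tilde u_j$ takes values in a disc of radius $r_j = r_j(c)$ and maps $\partial D^2$ to the circle of radius $r_j$.

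Next, I would compute winding numbers. The intersection number $[u] \cdot [\pi_T^{-1}(\partial_j P)]$ equals the algebraic count of preimages of the toric divisor $\{z_j=0\}$, which in turn equals the winding number of $\tilde u_j|_{\partial D^2}$. By definition of $\beta_i$ this number is $\delta_{ij}$, so $\tilde u_i$ has winding number $1$ and all other $\tilde u_j$ have winding number $0$. A holomorphic map $D^2 \to \C$ with circular boundary and winding number $0$ must be constant, and with winding number $1$ must be a Blaschke factor times a constant; precomposing by a disc automorphism to normalize this Blaschke factor to the identity $z \mapsto z$ yields the desired form $u(z) = [c_1,\ldots,zc_i,\ldots,c_m]$ in homogeneous coordinates, with $(c_1,\ldots,c_m) \in \pi_K^{-1}(r_a)$ satisfying $\pi_T(c_1,\ldots,c_m) = c$.

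For the regularity statement, I would show that the linearized Cauchy--Riemann operator $D_u\bar\partial$ acting on sections of $u^*TX_P$ with totally real boundary conditions along $u^*TL_c$ is surjective. The key input is that $u^*TX_P$ splits (using homogeneous coordinates and the Euler-type sequence for toric manifolds) as a direct sum of line bundles on $D^2$ whose boundary conditions correspond to the $\tilde u_j$. For $j \neq i$ these are trivial line pairs $(\C, i\R)$ on $(D^2, \partial D^2)$, and for $j = i$ the pair has Maslov index $2$; the standard Riemann--Hilbert theory then gives surjectivity of $D_u\bar\partial$ (indeed, each summand has no higher cohomology), and since the total Maslov index is $2$ and $\dim_\R L_c = n$, the index equals $n$, matching the dimension of $\mathcal{M}_1(L_c, \beta_i)$ claimed by Theorem \ref{discs}.

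The main obstacle is verifying the splitting of $u^*TX_P$ compatibly with the totally real boundary condition $u^*TL_c$; this requires carefully identifying $TX_P$ along $u$ with the quotient of $T\C^m|_{\tilde u}$ by the tangent and normal directions to the $K$-orbit, and then checking that the induced splitting respects the real boundary condition. Once this identification is in place, the surjectivity and index computations are routine applications of Riemann--Hilbert for discs in $\C$.
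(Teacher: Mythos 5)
The paper does not prove this statement itself --- it is quoted from Cho--Oh \cite{chooh} (with the technical hypotheses later removed in \cite{toricfooo}) --- so what you are reconstructing is essentially their argument, and your overall strategy (lift, reduce to one-variable Riemann--Hilbert data, classify by Blaschke products, prove regularity by splitting $u^*TX_P$ into line bundles) is the right one. There is, however, a genuine gap in your first step. You lift $u$ to the moment-map level set $\pi_K^{-1}(r_a)$ and assert that this is a $K$-invariant \emph{complex} submanifold of $\C^m$, so that the components $\tilde u_j$ are holomorphic. That is false: $\pi_K^{-1}(r_a)$ is only a real (coisotropic) submanifold of real codimension $m-n$; the complex structure on $X_P$ comes from the identification of the symplectic quotient with the holomorphic quotient $(\C^m\setminus Z)/K_\C$, not from $\pi_K^{-1}(r_a)$ being complex. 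A lift into the level set is therefore not componentwise holomorphic, and the entire winding-number/Blaschke argument collapses as written. The fix is to lift instead through the holomorphic principal $K_\C\cong(\C^*)^{m-n}$-bundle $\C^m\setminus Z\to X_P$ (trivial over $D^2$), obtaining a holomorphic $\tilde u$, and then to correct $\tilde u$ by a holomorphic map $D^2\to K_\C$ so that its boundary lands on the compact torus $\prod_j\{|w_j|=r_j\}$; this correction amounts to solving a Schwarz--Dirichlet problem (finding a holomorphic $\mathrm{Lie}(K)\otimes\C$-valued function with prescribed real part on $\partial D^2$), which is the step your write-up is missing. Once that normalization is in place, your winding-number computation (number of zeros of $\tilde u_j$ equals $[u]\cdot[\pi_T^{-1}(\partial_j P)]=\delta_{ij}$ by positivity of holomorphic intersections) and the Blaschke classification go through.

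The regularity half is essentially correct and is the same computation the paper itself performs for strips in Lemma 4.2: the image of a $\beta_i$-disc lies in a single affine chart $U_\sigma\cong\C^n$ in which $L_c$ is a product of circles, so $u^*TX_P$ splits into $n$ line-bundle Riemann--Hilbert problems of Maslov indices $2,0,\ldots,0$, each with vanishing cokernel. Two small bookkeeping points: the index of $D_u\bar\partial$ is $n+\mu(\beta_i)=n+2$, not $n$; the moduli space $\mathcal M_1(L_c,\beta_i)$ has dimension $n$ only after quotienting by the $3$-dimensional automorphism group and adding back the marked point. Neither affects surjectivity.
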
 
We will also need the following result from \cite{chooh}.
\begin{theorem}
The area of a disc in the class $\beta_i$ is
$$E_i := 2\pi ( \langle c, v_i \rangle - \lambda_i).$$
\end{theorem}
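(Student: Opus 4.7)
With the previous theorem in hand we may take $u(z)=[c_1,\ldots,zc_i,\ldots,c_m]$ where $(c_1,\ldots,c_m)\in\pi_K^{-1}(r_a)$ and $\pi_T(c_1,\ldots,c_m)=c$. The plan is to replace the area calculation on $X_P$ by one on $\C^m$ and then invoke Stokes' theorem. Since $D^2$ is contractible and $\pi_K^{-1}(r_a)\to X_P$ is a principal $K$-bundle, $u$ admits a smooth lift $\tilde u\colon D^2\to\pi_K^{-1}(r_a)\subset\C^m$; because $\omega_{X_P}$ is induced from $\omega_{\C^m}$ by symplectic reduction, $\int_{D^2}u^*\omega_{X_P}=\int_{D^2}\tilde u^*\omega_{\C^m}$, independently of the choice of lift.

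The central observation is that $\omega_{\C^m}=d\alpha$ for $\alpha=\sum_{j=1}^m\mu_j\, d\theta_j=\tfrac12\sum_{j=1}^m(x_j\, dy_j-y_j\, dx_j)$, with $\mu_j=|z_j|^2/2$ the components of the standard $T^m$-moment map. The second expression shows that $\alpha$ is globally smooth on $\C^m$, in particular at the loci $\{z_j=0\}$ where the individual $d\theta_j$ are singular. Stokes therefore yields $\int_{D^2}\tilde u^*\omega_{\C^m}=\int_{\partial D^2}\tilde u^*\alpha$ directly, with no limiting argument required.

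To evaluate the boundary term, observe that the naive formula $e^{i\theta}\mapsto (c_1,\ldots,e^{i\theta}c_i,\ldots,c_m)$ already lies in $\pi_K^{-1}(r_a)$ on $\partial D^2$, since the moduli $|z_j|^2$ are unchanged; take this as the boundary lift (which extends smoothly over the interior since $\pi_K^{-1}(r_a)\to X_P$ is a smooth principal bundle). Along it, $\tilde u^*d\theta_j\equiv 0$ for $j\neq i$ while $\tilde u^*d\theta_i=d\theta$, so only the $i$-th term of $\alpha$ contributes, giving $\int_{\partial D^2}\tilde u^*\alpha=\int_0^{2\pi}\tfrac{|c_i|^2}{2}\, d\theta=\pi|c_i|^2$. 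Finally, unwinding the definitions of $\pi_K$ and $\pi_T$ together with the normalization $v_j=e_j,\,\lambda_j=0$ for $j\leq n$ forces $|c_i|^2=2(\langle c,v_i\rangle-\lambda_i)$ for all $i=1,\ldots,m$, whence $E_i=2\pi(\langle c,v_i\rangle-\lambda_i)$. The only conceptual subtlety is the availability of the global smooth primitive $\alpha$, which obviates the need to deal separately with the facet stratum $\pi_T^{-1}(\partial_iP)$ through which the disc passes at $z=0$; everything else is a direct unwinding of the moment-map definitions from Section 2.
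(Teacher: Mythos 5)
Your argument is essentially correct, and it is worth noting that the paper does not actually prove this statement at all---it is quoted from Cho--Oh---so you are supplying a derivation where the text gives none. The reduction to $\C^m$ via a lift to $\pi_K^{-1}(r_a)$, the use of the globally smooth primitive $\alpha=\tfrac12\sum_j(x_j\,dy_j-y_j\,dx_j)$ (which correctly disposes of the apparent singularity of $\sum_j\mu_j\,d\theta_j$ where the disc meets $\{z_i=0\}$), the boundary evaluation $\int_{\partial D^2}\alpha=\pi|c_i|^2$, and the identification $|c_i|^2=2(\langle c,v_i\rangle-\lambda_i)$ (from the definition of $\pi_T$ and the normalization (\ref{fan}) when $i\le n$, and from the level-set condition $\pi_K=r_a$ when $i>n$) are all correct.

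The one step that needs more care is the parenthetical claim that the loop $\ell(\theta)=(c_1,\ldots,e^{i\theta}c_i,\ldots,c_m)$ ``extends smoothly over the interior since $\pi_K^{-1}(r_a)\to X_P$ is a smooth principal bundle.'' Contractibility of $D^2$ guarantees that \emph{some} lift $\tilde u$ of $u$ exists, but not that its boundary restriction is this particular loop: the two differ by a gauge loop $S^1\to K$, and the obstruction to $\ell$ bounding a lift of $u$ is that loop's class in $\pi_1(K)\cong\Z^{m-n}$. This is not a pedantic point, because Stokes applied to a boundary lift in the wrong class $k=\sum_l a_lQ_l\neq 0$ would change your answer by $\pi\sum_j|c_j|^2k_j=2\pi\sum_l a_l\lambda_{n+l}$, which is generically nonzero. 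The gap is easily filled by exhibiting the lift explicitly: since the symplectic quotient agrees with the quotient by the complexified torus $K_\C$, for each $z$ there is a unique element $g(z)$ of the positive-real part of $K_\C$ with $g(z)\cdot(c_1,\ldots,zc_i,\ldots,c_m)\in\pi_K^{-1}(r_a)$; this $g$ depends smoothly on $|z|^2$ alone and equals the identity on $|z|=1$, where the moduli already satisfy the level-set condition. The resulting global lift restricts to $\ell$ on the boundary, and the rest of your computation then goes through verbatim (a sanity check in $\C P^1$: $E_1+E_2$ equals the total symplectic area, as it must).
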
 

We will now describe the obstruction $o(L_c)$.
First, we need some notation to describe the sheaf that will be used to twist the Floer complex.
Let
\begin{eqnarray}\label{sheaf}
l_i : S^1 & \rightarrow  & L_c \nonumber \\ 
      e^{i\theta} & \rightarrow & [c_1,\ldots, e^{i\theta}c_i,\ldots,c_m]
\end{eqnarray}
(for $i=1,\ldots,n$) be a basis of $H_1(L_c, \Z)$. 
As before, let $\mathcal{L}_{\rho}$ be the sheaf given by a homomorphism $\rho : H_1(L_c, \Z) \rightarrow (\mathbb{F}_2)^{\ast}$. 
Let $\rho_i = \rho(l_i)$ and $\rho^{v_j}=\rho^{v^1_j}_{1} \ldots \rho^{v^n_j}_{n}$ (see (\ref{eq:components}) for the definition of $v_j$).

\begin{proposition}
Let $L_c$ be a torus fiber in $X_P$.
Then the obstruction $o(L_c)$ is 
\begin{equation}\label{m0}
o(L_c) = \sum^{m}_{i=1} T^{E_i}.
\end{equation}
If the Floer complex is twisted by the sheaf $\mathcal L_\rho$, the obstruction is
\begin{equation}\label{m0f}
o(L_c, \rho) = \sum^{m}_{j=1} \rho^{v_j}T^{E_j}.
\end{equation}
\end{proposition}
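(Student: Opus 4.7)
The proof is essentially a bookkeeping exercise built on the Cho--Oh classification results (Theorems 3.2, 3.3, 3.4 just stated). The plan is to combine these three inputs: (i) the only Maslov index 2 classes that contribute are $\beta_1,\ldots,\beta_m$; (ii) each moduli space $\mathcal M_1(L_c,\beta_i)$ is regular, boundaryless, with $ev_0$ a diffeomorphism onto $L_c$; and (iii) the area of a disc in class $\beta_i$ is $E_i$. Unpacking the definition
\[
o(L_c)=\sum_{\beta\in\pi_2(X_P,L_c),\,\mu(\beta)=2}\bigl([\mathcal M_1(L_c,\beta)]\cdot[p]\bigr)\,T^{\omega(\beta)},
\]
item (i) reduces the sum to the $m$ classes $\beta_i$, item (ii) gives $[\mathcal M_1(L_c,\beta_i)]\cdot[p]=1\bmod 2$ for a generic point $p\in L_c$ (since a diffeomorphism meets a point exactly once), and item (iii) supplies the exponent $T^{E_i}$. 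This immediately yields the first formula $o(L_c)=\sum_{i=1}^m T^{E_i}$.

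For the twisted version (\ref{m0f}), the only additional ingredient is the holonomy $\mathrm{hol}_{\mathcal L_\rho}(\partial\beta_i)\in\F_2^*$. Using the explicit parameterization from Theorem 3.3, the boundary of a disc in class $\beta_i$ is the loop
\[
\gamma_i(\theta)=[c_1,\ldots,e^{i\theta}c_i,\ldots,c_m].
\]
For $i\leq n$ this is exactly the generator $l_i$ by the definition (\ref{sheaf}), so $\mathrm{hol}_{\mathcal L_\rho}(\partial\beta_i)=\rho_i=\rho^{v_i}$ (recalling $v_i=e_i$ by our normalization (\ref{fan})). For $i>n$ I would use the $K$-equivalence in homogeneous coordinates to rewrite $\gamma_i$ in terms of $l_1,\ldots,l_n$: applying the element $z=e^{i\theta}$ of the subtorus $K$ together with the basis vector $Q_{i-n}=(v^1_i,\ldots,v^n_i,0,\ldots,-1,\ldots,0)\in\mathbb K$ from (\ref{Kbasis}), we get
\[
\gamma_i(\theta)=\bigl[e^{i\theta v^1_i}c_1,\ldots,e^{i\theta v^n_i}c_n,c_{n+1},\ldots,c_m\bigr],
\]
since the $-1$ in position $i$ cancels the factor $e^{i\theta}$ in $\gamma_i$. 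This identifies $[\partial\beta_i]=\sum_{k=1}^n v^k_i\,l_k$ in $H_1(L_c,\Z)$, hence
\[
\mathrm{hol}_{\mathcal L_\rho}(\partial\beta_i)=\rho_1^{v^1_i}\cdots\rho_n^{v^n_i}=\rho^{v_i}.
\]
Plugging this into the weighted sum gives (\ref{m0f}).

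The only genuinely non-trivial step is the holonomy computation for $i>n$; everything else is a direct application of the preceding theorems. The main obstacle, therefore, is correctly translating between the homogeneous-coordinate description of $\beta_i$ and the basis $\{l_1,\ldots,l_n\}$ of $H_1(L_c,\Z)$, which requires invoking the exact sequence (\ref{exactseq}) and the explicit basis (\ref{Kbasis}) of $\mathbb K$. Once that identification is made, the proposition follows without further analysis.
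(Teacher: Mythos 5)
Your proposal is correct and follows essentially the same route as the paper: reduce the sum to the classes $\beta_1,\ldots,\beta_m$ via Theorem 3.1, use regularity and the fact that $ev$ is a diffeomorphism to get coefficient $1$, weight by $T^{E_i}$ via Theorem 3.3, and for the twisted case rewrite $\partial\beta_k$ (for $k>n$) via the $K$-action generated by $Q_{k-n}$ to identify the holonomy as $\rho^{v_k}$. Your write-up actually spells out the holonomy bookkeeping in more detail than the paper does, but the underlying argument is identical.
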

\begin{proof}
By Theorem 3.1, the only homotopy classes that admit holomorphic discs are $\beta_1,\cdots,\beta_m$.
These discs are regular and the evaluation map $ev_1:\mathcal M(L_c,\beta_i)\to L_c$ is a diffeomorphism, i.e. each point of $L_c$ intersects the boundary of one disc of class $\beta_i$.
Finally, by Theorem 3.3, $\omega(\beta_i)=E_i$.
Therefore
\begin{displaymath}
o(L_c) = \sum^{m}_{i=1} T^{E_i}.
\end{displaymath}
The proof of the second statement follows by also taking into consideration the holonomy of $\mathcal L_\rho$ and using the fact that the boundary of the disc
\begin{displaymath}
z\mapsto [c_1,\ldots,c_kz,\ldots,c_m], \ |z|=1
\end{displaymath}
can also be written as
\begin{displaymath}
z\mapsto [c_1z^{v_k^1},\ldots,c_nz^{v_k^n},c_{n+1},\ldots,c_m]
\end{displaymath}
by using the action of the subgroup of $K$ generated by $Q_{k-n}$ (defined in (\ref{Kbasis})) when $k \geq n$. 
\end{proof}

\section{Floer Cohomology of $(R_P , L_c)$}

Let $X_P$ be a Fano toric manifold (as in Section 2), let $R=\textrm{Fix}(\tau)$ be the real Lagrangian submanifold ($\tau:X_P\to X_P$ is complex conjugation) and let $L_c = \pi^{-1}_{T}(c)$ for $c \in \textrm{int}(P)$ be a Lagrangian torus fiber.
Let $\mathcal{L}_{\rho}$ be a locally constant sheaf on $L_c$.
In this section we will describe the complex 
$$(C(R_P,(L_c,\mathcal L_\rho);\unovf), \delta)$$
as defined in the previous section.
Namely $\delta$ wil be the classical Floer differential given by counting holomorphic strips (with the appropriate weights) of Maslov index 1. 
In Section 7 we will prove that with the standard toric complex structure, the definition of the Floer complex given in \cite{fooo} reduces to the one above. 
It will then follow from the theory in \cite{fooo} and \cite{fooo8} that the Floer cohomology is invariant under Hamiltonian deformation and does not depend on the choice of spherically positive almost complex structure in the same connected component (of spherically positive almost complex structures) as the toric complex structure.

$L_c$ and $R$ intersect transversely in $2^n$ points. If $p\in L_c\cap R$ we can write (in a non-unique way) in homogeneous coordinates 
$p=[c_1,\ldots, c_m]$
for some $ c_i \in \R^{\ast}$ satisfying $\pi_{T}[c_1,\ldots, c_m]=c$ and $\pi_{K}(c_1,\ldots, c_m)=r$. From the expressions (\ref{muK}) and (\ref{muT}) we see that the last two conditions determine the norms of the $c_i$'s, but not the signs.
Using the action of $K$ we can normalize the choices of the $c_i$'s. For each $i=n+1, \ldots, m$ we use the one-parameter subgroup of $K$ generated by $Q_{i-n}$ in (\ref{Kbasis}) to make $c_i$ positive. In this way we can write each $ \ p \in L_c \cap R$ uniquely as 
\begin{equation}\label{int}
p=[(-1)^{\epsilon_1} c'_1,\ldots, (-1)^{\epsilon_n} c'_n, c'_{n+1},\ldots,c'_m]
\end{equation}
where $\epsilon_i \in \ \{0,1\}$ and the $c'_i$'s are determined by the conditions
\begin{itemize}
	\item $(c'_1,\ldots, c'_m) \in \pi^{-1}_{K}(r)$,
	\item $[c'_1,\ldots, c'_m] \in \pi^{-1}_{T}(c)$,
	\item $c'_i$ is positive for all $i$.
\end{itemize}
Therefore we can identify the intersection points with tuples $\epsilon=(\epsilon_1,\ldots, \epsilon_n)$ and in this way we get
\begin{equation} \label{basis}
C(R,L_c; \unovf)= \bigoplus_{\epsilon} \epsilon \cdot \textrm{Hom}(\mathcal{L}|_{\epsilon}, \mathbb{F}_2) \otimes \unovf  \ \ \textrm{where} \ \epsilon=(\epsilon_1,\ldots, \epsilon_n) \in \{0,1\}^n.
\end{equation}

To calculate the Floer differential we need to describe the holomorphic strips with boundaries on the Lagrangians. We will reduce this problem to the case of discs with boundary on the torus $L_c$.
Let $u: \R \times [0,1] \rightarrow X_P$ be a holomorphic strip with the bottom boundary on $R_P$ and top boundary on $L_c$.
Noting that $\tau$ preserves $L_c$ we can use the Schwarz reflection principle to reflect $u$ about $R$ to obtain a map
$$\tilde u:\mathbb R\times[-1,1]\to X$$
with the properties
\begin{itemize}
\item $\tilde u|\mathbb R\times[0,1]=u$,
\item the energy of $\tilde u$ is twice that of $u$, and
\item both the top and bottom boundaries of $\tilde u$ lie on $L_c$.
\end{itemize}
Note that $\mathbb R\times[-1,1]$ is conformally equivalent to $D^2\setminus\{-1,1\}$, so the domain of $\tilde u$ can be thought of as $D^2\setminus\{-1,1\}$.
Then, by the removable singularities theorem, $\tilde u$ extends to a holomorphic map $\tilde u:(D^2,\partial D^2)\to(X,L_c)$.
The Maslov index of $\tilde u$ (thinking of $\tilde u$ as a disc) is twice that of $u$. 
In conjunction with Theorem 3.2 this proves
\begin{proposition}\label{strips}
Let  $p=[c_1,\ldots, c_m] \in L_c \cap R$. There are $m$ ($m$ is the number of facets of $P$) holomorphic strips of Maslov index 1 that start at $p$. They are the top halves of the discs
$$u_j:z\mapsto[c_1,\ldots,-c_jz,\ldots,c_m].$$
\end{proposition}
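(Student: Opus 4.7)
The plan is to combine the Schwarz reflection construction described immediately above with the classification of Maslov index $2$ discs in Theorem 3.2. Given any Maslov index $1$ holomorphic strip $u$ starting at $p$, the reflection construction yields a holomorphic disc $\tilde u:(D^2,\partial D^2)\to(X_P,L_c)$ of Maslov index $2$ satisfying the equivariance $\tau\circ\tilde u(z)=\tilde u(\bar z)$, with $\tilde u(-1)=p$ (the image of $-\infty$ under the conformal identification $\R\times[-1,1]\simeq D^2\setminus\{\pm1\}$). Conversely, restricting any such symmetric disc to the upper half disc recovers a strip with bottom on $R$ and top on $L_c$, so it suffices to enumerate the symmetric Maslov index $2$ discs passing through $p$ at $z=-1$.

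By Theorem 3.2, after Möbius reparametrization $\tilde u$ has the form
$$\tilde u(z)=[a_1,\ldots,a_jz,\ldots,a_m]$$
for some $j\in\{1,\ldots,m\}$ and some $(a_1,\ldots,a_m)\in\pi^{-1}_K(r_a)$ with $\pi_T(a_1,\ldots,a_m)=c$. Since $\tau$ is induced from complex conjugation on $\C^m$, the symmetry condition reads $[\bar a_1,\ldots,\bar a_j\bar z,\ldots,\bar a_m]=[a_1,\ldots,a_j\bar z,\ldots,a_m]$. I would then apply the $K$-action, following the same normalization used in (\ref{int}), to put the $a_i$ into real form with $a_i>0$ for $i>n$. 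The base-point condition $[a_1,\ldots,-a_j,\ldots,a_m]=[c_1,\ldots,c_m]$ then forces $a_i=c_i$ for $i\neq j$ and $a_j=-c_j$, so that $\tilde u=u_j$.

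For the converse direction, each $u_j(z)=[c_1,\ldots,-c_jz,\ldots,c_m]$ is by Theorem 3.2 a holomorphic disc of Maslov index $2$ in class $\beta_j$, with $u_j(-1)=p$ and $u_j(\partial D^2)\subset L_c$. Moreover $u_j([-1,1])\subset R$ because all homogeneous coordinates are real when $z\in\R$, so the restriction to the upper half disc is a genuine Maslov index $1$ strip starting at $p$ with the correct boundary conditions. The $m$ resulting strips are pairwise distinct because the discs $u_j$ realize the $m$ distinct relative homotopy classes $\beta_1,\ldots,\beta_m\in H_2(X_P,L_c)$.

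The hard part will be the bookkeeping in the middle paragraph: one must verify that the combined freedom of the $K$-gauge and of the real Möbius reparametrizations (those commuting with $z\mapsto\bar z$ and fixing $z=-1$) is exhausted precisely by imposing realness of the $a_i$, positivity for $i>n$, and the base-point condition, leaving no residual ambiguity. Once this matching is confirmed, the count of exactly $m$ strips is immediate.
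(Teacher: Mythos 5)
Your proposal takes exactly the paper's route: the proposition is proved there by the Schwarz-reflection doubling described in the paragraph immediately preceding it (a Maslov index $1$ strip with boundaries on $R$ and $L_c$ doubles to a Maslov index $2$ disc with boundary on $L_c$) combined with the Cho--Oh classification in Theorem 3.2. You have merely made explicit the matching of the doubled disc with $u_j$ via the reality, normalization and base-point conditions, which the paper leaves implicit; the residual ambiguity you flag is just the translation freedom on the strip, i.e.\ the standard equivalence under which strips are counted, so there is no gap.
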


Now that we have an explicit description of all the strips, we need to verify that they are regular:
\begin{lemma}
Let $u$ be a holomorphic strip with $\mu(u)=1$.
Then $D_u\bar\partial$ is surjective.
\end{lemma}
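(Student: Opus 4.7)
The plan is to reduce strip regularity to disc regularity by using the Schwarz reflection/doubling construction already introduced before Proposition 4.2, combined with an equivariance argument.

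First, I would double $u$ to the holomorphic disc $\tilde u:(D^2,\partial D^2)\to(X_P,L_c)$ with $\mu(\tilde u)=2$, and note that $\tilde u$ intertwines the complex-conjugation involution $\sigma(z)=\bar z$ on $D^2$ with the anti-holomorphic involution $\tau$ on $X_P$, i.e. $\tilde u\circ\sigma=\tau\circ\tilde u$. By Proposition 4.2 every Maslov index $1$ strip arises this way, and by Theorem 3.2 the doubled disc $\tilde u$ is regular, so its linearization $D_{\tilde u}\bar\partial$ (acting on, say, a suitable $W^{1,p}$-space of sections of $\tilde u^*TX_P$ with boundary values in $TL_c$) is surjective.

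Next, I would promote the intertwining to the linearized level. Given a section $\xi$ of $u^*TX_P$ on $\mathbb R\times[0,1]$ with $\xi|_{\mathbb R\times\{0\}}\in TR_P$ and $\xi|_{\mathbb R\times\{1\}}\in TL_c$, define its double $\tilde\xi$ over $\mathbb R\times[-1,1]\cong D^2\setminus\{\pm 1\}$ by $\tilde\xi(z)=\xi(z)$ for $\mathrm{Im}(z)\geq 0$ and $\tilde\xi(z)=d\tau(\xi(\bar z))$ for $\mathrm{Im}(z)\leq 0$. The boundary condition $\xi|_{\mathbb R}\in TR_P=\mathrm{Fix}(d\tau)$ guarantees that $\tilde\xi$ is continuous across the real axis; elliptic regularity (or a removable-singularity argument on the Sobolev completion) shows that $\tilde\xi$ extends to a section of $\tilde u^*TX_P$ over the full disc with $TL_c$-boundary values, satisfying the symmetry $\tilde\xi(\bar z)=d\tau(\tilde\xi(z))$. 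The same doubling works for $(0,1)$-forms with values in $\tilde u^*TX_P$, identifying the target of $D_u\bar\partial$ with the fixed-point subspace of the induced $\mathbb Z_2$-action on the target of $D_{\tilde u}\bar\partial$. Since $\tau$ is anti-holomorphic and $\sigma$ reverses orientation on $D^2$, the operator $D_{\tilde u}\bar\partial$ is equivariant for these $\mathbb Z_2$-actions, and under these identifications its restriction to the $+1$-eigenspace is exactly $D_u\bar\partial$.

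Finally, an equivariant surjective linear map between $\mathbb Z_2$-representations automatically restricts to a surjection between the $+1$-eigenspaces: given $\eta$ in the $+1$-eigenspace of the target, choose any preimage $\tilde\xi$, decompose $\tilde\xi=\tilde\xi^++\tilde\xi^-$, and observe that $D_{\tilde u}\bar\partial(\tilde\xi^+)$ lies in the $+1$-eigenspace while $D_{\tilde u}\bar\partial(\tilde\xi^-)$ lies in the $-1$-eigenspace, so $D_{\tilde u}\bar\partial(\tilde\xi^+)=\eta$. Combining this with the surjectivity of $D_{\tilde u}\bar\partial$ from Theorem 3.2 yields surjectivity of $D_u\bar\partial$. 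The main obstacle I expect is purely technical: verifying that the doubling construction is compatible with the chosen functional-analytic setup at the two boundary punctures $\pm 1$ of the strip (where the asymptotic intersection points sit), so that the isomorphism between the strip operator and the invariant part of the disc operator holds at the level of Fredholm operators rather than merely on smooth sections.
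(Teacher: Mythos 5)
Your argument is correct, but it is not the route the paper takes. The paper's proof is an explicit local computation: by Proposition 4.2 the strip is the top half of one of the discs $u_j$, whose image lies in a single affine chart $\mathbb C^n$ of the fan; in holomorphic coordinates there the strip becomes a product $v_1\times\cdots\times v_n$ of maps into $\mathbb C$, the Dolbeault operator splits as $D_{v_1}\bar\partial\oplus\cdots\oplus D_{v_n}\bar\partial$, and each one-dimensional factor is surjective because its partial Maslov index is nonnegative (the standard automatic-regularity criterion in complex dimension one, as in \cite{alston}). You instead double the strip across $R_P$ and run the $\mathbb Z_2$-equivariance argument: $D_{\tilde u}\bar\partial$ is surjective by Theorem 3.2, it intertwines the involutions $\xi\mapsto d\tau\circ\xi\circ\sigma$ on domain and target, and an equivariant surjection of real Banach $\mathbb Z_2$-modules restricts to a surjection on $+1$-eigenspaces (the eigenspace decomposition is available since $1/2\in\mathbb R$). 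That last step is airtight, and the identification of the strip problem with the invariant part of the disc problem is the same kind of reflection-plus-removable-singularity analysis the paper already performs at the nonlinear level before Proposition 4.2; the point you flag about the functional-analytic setup at the punctures $\pm1$ is the only technical debt, and it is standard for transverse intersection points. The trade-off: your argument is more conceptual and would apply to any strip whose double is a regular disc, without needing the explicit form of $u$, whereas the paper's splitting argument is more elementary and self-contained (note, though, that Cho--Oh's proof of Theorem 3.2 itself rests on the same line-bundle splitting and nonnegative partial indices, so the two proofs are ultimately close relatives).
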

\begin{proof}
By the previous proposition, $u$ is the top half of one of the discs $u_j$.
Without loss of generality assume $u_j=u_1$.
The image of $u_1$ is contained in the affine open set corresponding to the top dimensional cone spanned by $v_1,\ldots,v_n$ in the fan $\Sigma_P$.
Using this affine set, we can choose (holomorphic) coordinates such that the map $u_1$ is
\begin{displaymath}
u_1:(D^2,\partial D^2)\to(\mathbb C^n,(S^1)^n),\  z\mapsto (z,1,\ldots,1).
\end{displaymath}
The strip $u$ is the top half of $u_1$, so $u=v_1\times\cdots\times v_n$ where $v_i:\mathbb R\times[0,1]\to \mathbb C$, $v_1$ is the standard biholomorphism from the strip onto the top half of the unit disc, and $v_i=1$ for $i\geq 2$.
$X_P$ is K\"ahler so the linearization $D_u\bar\partial$ is the Dolbeault operator.
Therefore $D_u\bar\partial=D_{v_1}\bar\partial\oplus\cdots\oplus D_{v_n}\bar\partial$.
All of the Maslov indices $\mu(v_i)$ are nonnegative, so each $D_{v_i}\bar\partial$ is surjective (see for example \cite{alston}).
Therefore $D_u\bar\partial$ is surjective.
\end{proof}

We can now describe the Floer differential. 
By Proposition \ref{strips}, if $u_j\in\mathcal M(p,q : \mu=1)$ and $$ p=[c_1,\ldots,c_j,\ldots,c_m]$$
then
$$ q=[c_1,\ldots,-c_j,\ldots,c_m].$$ 
Using the action of $K$, $q$ can be rewritten as $$q=[(-1)^{v^1_j}c_1,\ldots, (-1)^{v^n_j}c_n, c_{n+1},\ldots, c_m].$$ 
where the $v^i_j$ are as in (\ref{eq:components}). 
Therefore, in terms of the basis (\ref{basis}), the strip $u_j$ contributes to the differential as the map
$$(\epsilon_1,\ldots,\epsilon_n)\mapsto(\epsilon_1 + v^{1}_{j},\ldots,\epsilon_n + v^{n}_{j}).$$

To complete the description of the differential we need to include the contribution of the locally constant sheaf $\mathcal{L}_{\rho}$.
For this purpose we will choose a generator of $\textrm{Hom}(\mathcal{L}|_{\epsilon},\mathbb{F}_2)$ for each $\epsilon=(\epsilon_1,...,\epsilon_n)$. For $\epsilon=(0,\ldots,0)$ we pick $\alpha_0$, an arbitrary non-zero element of $\textrm{Hom}(\mathcal{L}|_{0},\mathbb{F}_2)$.
Given other $\epsilon$ consider the path 
$$l_\epsilon(t):=[e^{it\epsilon_1}c'_1,\ldots,e^{it\epsilon_n}c'_n, c'_{n+1},\dots, c'_m] \in L_c, \ t \in [0,\pi],$$
connecting $(0,\ldots,0)$ and $\epsilon=(\epsilon_1,...,\epsilon_n)$.
Then, we define
$$\alpha_\epsilon = \rho^{\epsilon/2}\alpha_0 \circ par(l^{-1}_{\epsilon}) \in \textrm{Hom}(\mathcal{L}|_{\epsilon},\mathbb{F}_2).$$
Here we use the notation $\rho^{\epsilon/2}=\rho^{\epsilon_1/2}_1 \ldots \rho^{\epsilon_n/2}_{n}$, where $\rho_i$ are as in (\ref{sheaf}).
Recall, from the definition of the differential, that the strip $u_j$ starting at $\epsilon$ and ending at $\delta$ sends $\alpha_\epsilon$ to $\alpha_\epsilon \circ par(l_j)$.
Here $l_j$ is the top boundary of the strip $u_j$:
$$[ e^{it v^1_j}(-1)^{\delta_1}c'_1,\ldots,e^{it v^n_j}(-1)^{\delta_n}c'_n,c'_{n+1},\dots, c'_m ], \ t \in [0,\pi].$$
Then we consider the loop $l$ based at $[c'_1,\ldots,c'_m]$ obtained as the concatenation $l^{-1}_{\delta} \circ l^{-1}_j \circ l_{\epsilon}$. Where $l^{-1}_\delta$ is the path $l_{\delta}$ with the opposite orientation. 

By definition, $\alpha_0 \circ par(l) = \rho(l) \alpha_0$. Now, one can easily check that 
$$\rho(l)= \rho^{(\epsilon-\delta-v_j)/2}.$$
Observe that $\delta= \epsilon + v_j \ (mod \ 2)$ and so the exponent is an integer.
This implies
\begin{eqnarray*}
\rho^{\epsilon/2} \alpha_0\circ par(l^{-1}_{\epsilon}) & = & \rho^{(\delta + v_j)/2} \alpha_0 \circ par(l^{-1}_{\delta}) \circ par(l^{-1}_j), \\
\alpha_\epsilon \circ par(l_j) & = & \rho^{v_j/2} \alpha_{\delta}.
\end{eqnarray*}
With this at hand we can state the main theorem of the paper:
\begin{theorem} \label{main}
Let $X_P$ be a Fano toric manifold with Lagrangian submanifolds $R=\textrm{Fix}(\tau)$ and $L_c$. Equip the torus fiber with a locally constant sheaf $ \mathcal{L}_{\rho}$. Suppose $o(L_c, \rho)=0$. Then the Floer cohomology of the pair $HF(R,(L_c,\mathcal{L}_{\rho});\unovf)$ is well defined and isomorphic to the cohomology of the complex $(C,\delta)$ given by
\begin{eqnarray}\label{formula}
C & =& \bigoplus_{\epsilon} \epsilon \cdot \unovf ,  \ \epsilon=(\epsilon_1,\ldots, \epsilon_n) \in (\Z_2)^n \nonumber \\
\delta & = & \sum^{m}_{j=1}{\sqrt{\rho^{v_j}}T^{E_j/2} f_j} \\
f_j(\epsilon_1,\ldots,\epsilon_n) & = & (\epsilon_1 + v^{1}_{j},\ldots,\epsilon_n + v^{n}_{j}). \nonumber
\end{eqnarray}
\end{theorem}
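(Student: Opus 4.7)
The proof splits into two essentially independent parts: establishing that $HF(R_P, (L_c, \mathcal{L}_\rho); \unovf)$ is well defined, and identifying it with the cohomology of the explicit complex $(C,\delta)$.

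For well-definedness, the plan is to invoke the general Floer-theoretic framework of \cite{fooo}, in the $\mathbb{Z}_2$-coefficient version of \cite{fooo8}, which will be reviewed in Section 6. The twisted obstruction $o(L_c, \rho)$ computed in Proposition 3.4 vanishes by hypothesis, so $(L_c, \mathcal{L}_\rho)$ is weakly unobstructed and its potential contribution is zero. In Section 7 we will show that $R_P$ is unobstructed with $o(R_P) = 0$; the intuitive reason is that the anti-holomorphic involution $\tau$ acts freely on the moduli space of Maslov index $2$ holomorphic discs with boundary on $R_P$ via Schwarz reflection, forcing the mod $2$ count to vanish. With both obstructions equal (both $0$), the FOOO machinery produces a Floer cohomology that is invariant under Hamiltonian isotopy and under deformations of the almost complex structure within the component of spherically positive $J$'s containing the toric structure.

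For the identification with $(C,\delta)$, the strategy is to show that with the standard toric complex structure and an appropriate choice of weak bounding cochains, the full FOOO differential collapses to the classical count of Maslov index $1$ strips weighted by symplectic area and sheaf holonomy. This collapse is the principal technical content of Section 7; granting it, the remaining work is already in place above the statement. The $2^n$ generators of $C$ correspond bijectively to $R_P \cap L_c$ via equation (\ref{int}). By Proposition \ref{strips}, from each intersection point there are exactly $m$ Maslov index $1$ strips, namely the top halves of the Cho-Oh discs $u_j$, and these are regular by the lemma just proved. The strip $u_j$ based at $\epsilon$ terminates at $\epsilon + v_j \pmod{2}$, has symplectic area $E_j/2$ (half the disc area from Theorem 3.3), and contributes the holonomy weight $\rho^{v_j/2} = \sqrt{\rho^{v_j}}$ by the holonomy computation carried out immediately before the theorem. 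Assembling these contributions gives precisely the formula (\ref{formula}).

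The hard part is the reduction carried out in Section 7. Concretely it requires choosing compatible weak bounding cochains for $R_P$ and for $(L_c, \mathcal{L}_\rho)$ so that the twisted Maurer-Cartan equation is satisfied, and then checking that once these cochains are substituted, contributions from the higher $A_\infty$-bimodule operations and from strips with attached disc bubbles either cancel in pairs or vanish outright, leaving only the bare Maslov index $1$ strip count. The Fano vanishing results in Theorem \ref{discs}, which restrict the homotopy classes of holomorphic discs with boundary on $L_c$ to the classes $\beta_j$, will be essential for ensuring that the sum over homotopy classes is finite and produces exactly the $m$ terms appearing in $\delta$.
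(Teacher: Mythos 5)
Your proposal is correct and follows essentially the same route as the paper: the combinatorial description of $(C,\delta)$ is assembled from the preceding strip count, regularity lemma, and holonomy computation, while well-definedness and the reduction of the full FOOO differential to the Maslov index $1$ strip count are deferred to Sections 6 and 7 exactly as the paper does. The only point you omit that the paper's proof records explicitly is the one-line verification $\delta^2=\sum_{i,j}\sqrt{\rho^{v_i}}\sqrt{\rho^{v_j}}\,T^{(E_i+E_j)/2}f_if_j=o(L_c,\rho)\,\mathrm{id}$ (using that the $f_j$ commute and square to the identity in characteristic $2$), which shows directly that the hypothesis $o(L_c,\rho)=0$ is precisely the condition making $(C,\delta)$ a complex.
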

\begin {proof}
The description of the complex $(C, \delta)$ follows from the discussion above.

Observing that the $f_i$'s commute and $f^2_i=id$ we compute
$$\delta^2=\sum^m_{i,j=1} \sqrt{\rho^{v_i}}\sqrt{\rho^{v_j}}T^{\frac{E_i+E_j}{2}}f_i f_j = \sum^{m}_{j=1} \rho^{v_j}T^{E_j} id = o(L_c, \rho) id$$ 
And so the statement about the obstruction follows.
\end{proof}

\section{Applications}
Theorem \ref{main} reduces the problem of calculating the Floer cohomology of the pair $(R,L_c)$ to a purely computational problem. However, there does not seem to be an easy way to read off the rank of the cohomology from the combinatorial data defining the toric manifold. 

Our first application is to show that, whenever possible, the cohomology does not vanish.
Let us describe what we mean by ``whenever possible'':
It is a general fact that $HF(L)$ is a ring and $HF(L,L')$ is a $HF(L)-HF(L')$ bimodule. 
Also the fundamental class $[L] \in C_{n}(L)$ is the identity in $HF(L)$ and acts as a unit in $HF(L,L')$ (see Section 3.7 in \cite{fooo}). 
The same remark holds for $HF(L')$.
Thus if $HF(L)$ or $HF(L')$ is $0$, then $HF(L,L')$ is necessarily $0$.

For each $i \in {1,\ldots,n}$ define
\begin{equation}\label{zed}
Z_i= \sum^{m}_{j=1} v^i_j \rho^{v_j} T^{E_j}
\end{equation}
and let $Z=(Z_1,\ldots,Z_n)$.
We will see in Section 7 that:
\begin{itemize}
\item $HF(R ;\unovf) = H^{\ast}(R, \Z_2) \otimes \unovf \neq0$, and
\item $HF(L_c,\mathcal{L}_{\rho};\unovf)=$
$\left\{
\begin{array}{ll}
H^{\ast}(L_c, \Z_2) \otimes \unovf  & \textrm{if} \ Z=0 , \\
0  & \textrm{otherwise.} 
\end{array} 
\right.$
\end{itemize}
Therefore if $Z \neq 0$ then $HF(R,(L_c,\mathcal{L}_{\rho}))=0$.
We will now prove the converse.
\begin{theorem}\label{nonzero}
If $Z=0$, i.e. $HF(L_c,\mathcal{L}_{\rho};\unovf)\neq 0$, then $HF(R,(L_c,\mathcal{L}_{\rho});\unovf) \neq 0$.
\end{theorem}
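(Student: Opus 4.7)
The plan is to identify the complex $(C,\delta)$ of Theorem~\ref{main} with multiplication by a specific element in an Artinian local algebra, and then argue by induction on $n$.

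First, identify $C \cong \unovf[(\Z_2)^n]$ as $\unovf$-modules by sending $\epsilon$ to $x^\epsilon = x_1^{\epsilon_1}\cdots x_n^{\epsilon_n}$; then each $f_j$ becomes multiplication by $x^{\bar v_j}$ (where $\bar v_j$ is the mod~$2$ reduction of $v_j$), so $\delta$ acts as multiplication by $\Delta := \sum_{j=1}^m \sqrt{\rho^{v_j}}\,T^{E_j/2}\,x^{\bar v_j}$. Because we are in characteristic~$2$, the substitution $y_i := 1+x_i$ (so $y_i^2 = 0$) identifies this group algebra with $A := \unovf[y_1,\ldots,y_n]/(y_i^2)$, and a direct expansion gives
\begin{displaymath}
\Delta = \sum_{S\subseteq\{1,\ldots,n\}} c_S\,y^S, \qquad c_S = \sum_{\{j\,:\,I_j\supseteq S\}} \sqrt{\rho^{v_j}}\,T^{E_j/2},
\end{displaymath}
where $I_j = \{i : v_j^i \text{ is odd}\}$. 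In characteristic~$2$ squaring is additive, so $c_\emptyset^2 = W_c(\rho) = 0$ (which must hold for $HF(R,(L_c,\mathcal L_\rho);\unovf)$ to be defined) and $c_{\{i\}}^2 = Z_i = 0$ (by hypothesis). Since $\unovf$ is a field and Frobenius is injective, $c_\emptyset = c_{\{i\}} = 0$, so $\Delta$ lies in $\mathfrak m^2$, where $\mathfrak m = (y_1,\ldots,y_n) \subset A$.

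The theorem thus reduces to the purely algebraic claim: \emph{if $\Delta \in \mathfrak m^2 \subset A_n := \unovf[y_1,\ldots,y_n]/(y_i^2)$, then $H(A_n, L_\Delta) \neq 0$.} Note $\Delta^2 = 0$ is automatic, as squaring is additive in characteristic~$2$ and kills every $y^S$ with $|S|\geq 1$. I would proceed by induction on $n$; the cases $n\leq 1$ are trivial since $\mathfrak m^2 = 0$ forces $\Delta = 0$. For $n\geq 2$, decompose $A_n = A_{n-1} \oplus y_n A_{n-1}$ and $\Delta = \Delta_0 + y_n\Delta_1$. From $\mathfrak m_{A_n}^2 = \mathfrak m_{A_{n-1}}^2 \oplus y_n\mathfrak m_{A_{n-1}}$ one reads off $\Delta_0\in\mathfrak m_{A_{n-1}}^2$ and $\Delta_1\in\mathfrak m_{A_{n-1}}$. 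In this basis
\begin{displaymath}
L_\Delta = \begin{pmatrix} L_{\Delta_0} & 0 \\ L_{\Delta_1} & L_{\Delta_0} \end{pmatrix},
\end{displaymath}
exhibiting $(A_n, L_\Delta)$ as the mapping cone of $L_{\Delta_1}:(A_{n-1}, L_{\Delta_0}) \to (A_{n-1}, L_{\Delta_0})$. The associated $2$-periodic exact hexagon in cohomology has connecting map equal to multiplication by $\Delta_1$ on $H(A_{n-1}, L_{\Delta_0})$; chasing dimensions gives $\dim H(A_n, L_\Delta) = 2\dim\ker L_{\Delta_1}^*$. By the inductive hypothesis $H(A_{n-1}, L_{\Delta_0}) \neq 0$, and since $\mathfrak m_{A_{n-1}}^n = 0$, the operator $L_{\Delta_1}^*$ is nilpotent on this finite-dimensional $\unovf$-vector space; a nilpotent operator on a nonzero space has nonzero kernel, so $H(A_n, L_\Delta) \neq 0$.

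The main obstacle in this plan is identifying the connecting homomorphism with $L_{\Delta_1}$: this is a snake-lemma calculation in which one lifts a cocycle $x\in\ker L_{\Delta_0}\subset A_{n-1}$ to $x\in A_n$ and observes $L_\Delta(x) = y_n\Delta_1 x \in y_n A_{n-1}$, representing $\Delta_1\cdot[x]$ under the identification $y_n A_{n-1}\cong A_{n-1}$. With this step in place, the nilpotency of $\mathfrak m_{A_{n-1}}$ and elementary rank-nullity for nilpotent endomorphisms conclude the induction.
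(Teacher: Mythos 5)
Your proof is correct, and while it shares the paper's overall skeleton (induction on $n$, peeling off one coordinate, and using Condition 1, $W_c(\rho)=0$, and Condition 2, $Z=0$, to kill the constant and linear parts of the operator), the mechanism of the inductive step is genuinely different and, in my view, cleaner. The paper works in the "group algebra" basis, where the differential splits as $\bigl(\begin{smallmatrix} X_0 & X_1 \\ X_1 & X_0\end{smallmatrix}\bigr)$ according to the parity of $v_j^{n+1}$, proves $X_0^2=X_1^2=0$ from the two conditions, and then runs an ad hoc two-step contradiction argument (solving $d(x,y)=(c,c)$, then $d(a,b)=(x+y,x+y)$, and exhibiting $c$ as a coboundary). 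Your substitution $y_i=1+x_i$ triangularizes exactly this decomposition, repackages the two conditions as the single statement $\Delta\in\mathfrak m^2$ (using injectivity of Frobenius on the field $\unovf$, which is where algebraic closure of the coefficients is irrelevant but the domain property is used), and replaces the contradiction argument by the standard exact triangle of the mapping cone together with nilpotency of multiplication by $\Delta_1\in\mathfrak m$. What this buys is a structural explanation of non-vanishing — multiplication by an element of $\mathfrak m^2$ in the Artinian local ring $\unovf[y_1,\dots,y_n]/(y_i^2)$ can never be exact because the connecting map is nilpotent — plus the sharper quantitative output $\dim H(A_n,L_\Delta)=2\dim\ker L_{\Delta_1}^*$, which the paper's argument does not produce. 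The only quibbles are cosmetic: the relevant exact sequence here is a $3$-periodic triangle rather than a hexagon (the complexes are ungraded), and you should state explicitly at the outset that $W_c(\rho)=0$ is a standing hypothesis (it is, since the theorem presumes $HF(R,(L_c,\mathcal L_\rho);\unovf)$ is defined, exactly as in the paper's Condition 1).
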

\begin{proof}
We will prove, by induction on $n$, the non-vanishing of the cohomology of any complex of the form
\begin{eqnarray}\label{gencomplex}
C_n & =& \bigoplus_{\epsilon} \epsilon \cdot \unovf ,  \ \epsilon=(\epsilon_1,\ldots, \epsilon_n) \in (\Z_2)^n \nonumber \\
d_n & = & \sum^{m}_{j=1}{a_j T^{E_j/2} f_j}  \ \ , a_j \in \mathbb{F}_2 \\
f_j(\epsilon_1,\ldots,\epsilon_n) & = & (\epsilon_1 + v^{1}_{j},\ldots,\epsilon_n + v^{n}_{j}). \nonumber
\end{eqnarray}
satisfying
\begin{align*}
\quad \sum^{m}_{j=1} a^2_j T^{E_j}&=0 &  \textrm{Condition 1}\\
\quad \sum^{m}_{j=1} v^i_j a^2_j T^{E_j}&=0, \ \textrm{for all} \ i. & \textrm{Condition 2}
\end{align*}
The complex (\ref{formula}) is a special case of this, but we need to consider this slightly more general complex for the induction argument. In the special case of (\ref{formula}), Condition 1 is equivalent to $o(L_c, \rho)=0$ and Condition 2 is just $Z=0$. 

Let us check the case $n=1$.
By reindexing, if necessary, we can assume that 
\begin{equation}
v^1_j \equiv 
\left\{
\begin{array}{ll}
0 \ \ (mod \ 2) & \textrm{for} \ j \leq k ,\\ 
1 \ \ (mod \ 2) & \textrm{for} \ j \geq k+1.
\end{array}
\right.
\end{equation}
In the basis $\langle (0),(1)\rangle$, $d$ has the form
\begin{equation}
\left(
\begin{array}{cc}
\sum^{k}_{j=1} a_j T^{E_j/2} & \sum^{m}_{j=k+1} a_j T^{E_j/2} \\
\sum^{m}_{j=k+1} a_j T^{E_j/2} & \sum^{k}_{j=1} a_j T^{E_j/2}
\end{array}
\right).
\end{equation}
 Condition 2 implies $\sum^{m}_{j=k+1} a_j^2 T^{E_j}=0$. Combining with condition 1, we have $\sum^{k}_{j=1} a_j^2 T^{E_j}=0$. Since we are working in characteristic 2, this is equivalent to 
$$\sum^{m}_{j=k+1} a_j T^{E_j/2} \ = \ 0 \ = \ \sum^{k}_{j=1} a_j T^{E_j/2}.$$
So we have $d=0$ and $H^{\ast}(C,d)\neq0$.

Now let us prove the inductive step.
As above, assume that
\begin{equation}
v^{n+1}_j \equiv 
\left\{
\begin{array}{ll}
0 \ \ (mod \ 2) & \textrm{for} \ j \leq k ,\\ 
1 \ \ (mod \ 2) & \textrm{for} \ j \geq k+1.
\end{array}
\right.
\end{equation}
Let $X_0= \sum^{k}_{j=1} a_j T^{E_j/2}f_j$ and $X_1=\sum^{m}_{j=k+1} a_j T^{E_j/2}f_j$.
Noting that the $f_j$'s commute and $(f_i)^2=id$, we compute
$$(X_1)^2 = \sum^{m}_{j=k+1} a_j^2 T^{E_j} \ id$$   
Now, condition 2 for $i=n+1$ implies $\sum^{m}_{j=k+1} a_j^2 T^{E_j}=0$. So, we conclude $(X_1)^2=0$.
As in the case $n=1$, condition 1 implies $\sum^{k}_{j=1} a_j T^{E_j/2} = 0$. This gives $(X_0)^2 =0$.

Now, we consider the splitting $C_{n+1}=B_0 \oplus B_1$, where $B_i= \bigoplus (\epsilon_1,\ldots,\epsilon_n,i)$. Note that $X_0$ preserves the subspaces and $X_1$ interchanges them. Therefore, we can write
\begin{equation}
d=\left(
\begin{array}{cc}
X_0 & X_1 \\
X_1 & X_0
\end{array}
\right).
\end{equation}
We have natural identifications $B_0 \cong B_1 \cong C_{n}$. If we consider the obvious restrictions we have that $(C_{n},d_n=X_0+X_1)$ is a complex, since $X_0$ and $X_1$ are differentials and commute. Obviously, it's of the form (\ref{gencomplex}) and it satisfies the inductive hypothesis, so its cohomology doesn't vanish.

We will now prove the claim by contradiction.
Assume that the cohomology of $(C_{n+1},d_{n+1})$ vanishes.
By induction we can choose $c \in C_{n}$ representing a non-trivial cohomology class. Then $d_n(c)=X_0(c)+X_1(c)=0$. This implies $d_{n+1}(c,c)=0$, where $(c,c)=(c,0) \oplus(c,1)\in B_0 \oplus B_1$.
By assumption, there exists $(x,y) \in C_{n+1}$ such that $d_{n+1}(x,y)=(c,c)$, i.e.
\begin{equation} \label{p1}
\left\{
\begin{array}{l}
X_0(x) + X_1(y) = c \\ 
X_1(x) + X_0(y) = c.
\end{array}
\right.
\end{equation}
Observe 
$$c + d_{n}(y) = X_0(x) + X_1(y) + X_1(y) + X_0(y) = X_0(x+y).$$
This implies
\begin{equation} \label{p2}
c = X_0(x+y) + d_n(y).
\end{equation}
Note that (\ref{p1}) implies $d_n(x+y)=0$, which in turn gives $d_{n+1}(x+y,x+y)=0$. Again, by assumption, there is $(a,b)$ such that $d_{n+1}(a,b)=(x+y,x+y)$.
Using the same observation we have
$$x+y = X_0(a+b) +d_n(b).$$
Together with (\ref{p2}), this gives
\begin{eqnarray*}
c & = & X_0 \big( X_0(a+b) + d_n(b) \big) + d_n(y) \\
  & = & X_0^2(a+b) + X_0(d_n(b)) + d_n(y) \\
  & = & d_n(X_0(b) + y). 
\end{eqnarray*}
This contradicts the choice of $c$.
\end{proof}

Using the invariance of Floer cohomology under Hamiltonian diffeomorphism, the previous theorem immediately implies non-displaceability of the Lagrangians:

\begin{corollary}
Let $\phi$ be a Hamiltonian diffeomorphism of $X_P$. 
Then, under the hypotheses of Theorem 5.1,
$$\phi(R)\cap L_c \neq \emptyset.$$
\end{corollary}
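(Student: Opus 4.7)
The plan is a standard Floer-theoretic displaceability argument, which is essentially the reason one develops the machinery of Floer cohomology in the first place. I will argue by contradiction: assume that some Hamiltonian diffeomorphism $\phi$ satisfies $\phi(R) \cap L_c = \emptyset$, and derive a contradiction with Theorem \ref{nonzero}.

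First, I would set up the contradiction at the level of Floer complexes. If $\phi(R) \cap L_c = \emptyset$, then since $R$ and $L_c$ are compact and $X_P$ is Hausdorff, the intersection remains empty after a sufficiently $C^1$-small perturbation of $\phi$. I can therefore assume, after perturbing by a small Hamiltonian isotopy supported away from $L_c$, that $\phi(R)$ and $L_c$ are disjoint but still satisfy the transversality hypotheses needed to define the Floer complex $C(\phi(R),(L_c,\mathcal{L}_\rho);\unovf)$. Since the generators of this complex are elements of $\phi(R)\cap L_c$, the complex is the zero module, and hence $HF(\phi(R),(L_c,\mathcal{L}_\rho);\unovf)=0$.

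Next, I would invoke Hamiltonian invariance of Floer cohomology. By the theory of \cite{fooo} and the $\Z_2$-coefficient version in \cite{fooo8}, and as noted in Sections 3 and 4 of the present paper, the twisted Floer cohomology of the pair is invariant under Hamiltonian diffeomorphism, provided the obstruction $o(L_c,\rho)$ vanishes (which is part of the hypotheses of Theorem \ref{main} and therefore of Theorem \ref{nonzero}). In particular,
\begin{equation*}
HF(\phi(R),(L_c,\mathcal{L}_\rho);\unovf)\;\cong\;HF(R,(L_c,\mathcal{L}_\rho);\unovf).
\end{equation*}
By Theorem \ref{nonzero}, the right-hand side is non-zero, contradicting the computation of the previous paragraph.

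There is essentially no hard step here; the whole point of setting up the Floer theory carefully in Sections 3, 4, 6 and 7 is to make this kind of displaceability argument automatic. The only subtle point, which is not really an obstacle but deserves explicit mention, is that $\phi$ may not put $\phi(R)$ in general position with respect to $L_c$, so a preliminary perturbation by a small Hamiltonian isotopy is needed before invoking the combinatorial Floer complex of Theorem \ref{main}; this is harmless because Hamiltonian invariance guarantees that this perturbation does not change the cohomology, and disjointness is stable under small perturbations.
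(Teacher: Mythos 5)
Your proof is correct and is exactly the argument the paper intends when it says the corollary follows "immediately" from Theorem \ref{nonzero} together with Hamiltonian invariance of the Floer cohomology: empty intersection forces the Floer complex, hence the cohomology, to vanish, contradicting the non-vanishing established in Theorem \ref{nonzero}. (Your preliminary perturbation step is harmless but not even needed, since an empty intersection is already transverse.)
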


We will now see how to find examples where Theorem \ref{nonzero} applies. 
We define the potential function
\begin{eqnarray*}
W : \textrm{int}(P) \times(\mathbb{F}^{\ast}_2)^n  & \rightarrow  & \unovf \\
      (c;x_1,\ldots,x_n) & \mapsto & \sum^{m}_{j=1}{x^{v^1_j}_{1} \cdots x^{v^n_j}_{n} T^{E_j}}.
\end{eqnarray*}
(Note that $E_j$ depends on $c$.)
Now fix $c \in \textrm{int}(P)$ and define $W_c(x) :=W(c,x)$. Also, put $y_i=\rho(l_i)$. Then
$$o(L_c,\rho)=W_c(y_1,\ldots,y_n),$$ 
$$Z_i=\sum^{m}_{j=1} v^i_j \rho^{v_j} T^{E_j}= (x_i\frac{\partial}{\partial x_i}W_c(x))|_{x=y}.$$
It follows that
\begin{corollary}
The Floer cohomology $HF(R,(L_c,\mathcal{L}_\rho);\unovf)$ is well-defined if and only if $W_c(y)=0$ and is non-zero if and only if  $\nabla W_c(y)=0$.
\end{corollary}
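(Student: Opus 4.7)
The plan is to assemble this corollary directly from three ingredients already in the excerpt: the obstruction formula \eqref{m0f}, Theorem~\ref{main}, and Theorem~\ref{nonzero} together with the module-theoretic vanishing observation preceding it. No new moduli argument is needed; the entire content is an algebraic identification of the quantities $o(L_c,\rho)$ and $Z_i$ with values of $W_c$ and its partial derivatives.

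First I would handle the well-definedness statement. By Theorem~\ref{main}, the Floer cohomology $HF(R,(L_c,\mathcal L_\rho);\unovf)$ is defined precisely when $o(L_c,\rho)=0$. Substituting $y_i=\rho(l_i)$ into the definition of $W_c$ and comparing with \eqref{m0f} gives
\begin{displaymath}
W_c(y_1,\ldots,y_n)=\sum_{j=1}^m y_1^{v^1_j}\cdots y_n^{v^n_j}T^{E_j}=\sum_{j=1}^m \rho^{v_j}T^{E_j}=o(L_c,\rho),
\end{displaymath}
so the vanishing of $o(L_c,\rho)$ is equivalent to $W_c(y)=0$. This establishes the first equivalence.

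Next I would handle the non-vanishing statement. Under the hypothesis $W_c(y)=0$, Theorem~\ref{nonzero} together with the vanishing argument that precedes it (using the $HF(L_c,\mathcal L_\rho)$-module structure on $HF(R,(L_c,\mathcal L_\rho))$ to force the cohomology to vanish when $Z\ne 0$) gives the biconditional $HF(R,(L_c,\mathcal L_\rho);\unovf)\ne 0 \iff Z=0$. A direct differentiation of $W_c$ yields
\begin{displaymath}
x_i\frac{\partial W_c}{\partial x_i}(x)=\sum_{j=1}^m v^i_j\, x_1^{v^1_j}\cdots x_n^{v^n_j}T^{E_j},
\end{displaymath}
so evaluating at $x=y$ recovers exactly $Z_i$. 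Since each $y_i\in\mathbb F_2^{\ast}$ is invertible, $Z_i=0$ for all $i$ is equivalent to $\partial W_c/\partial x_i(y)=0$ for all $i$, i.e. $\nabla W_c(y)=0$.

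Because the proof is an almost mechanical unpacking of definitions, I do not anticipate any real obstacle; the only subtle point to get right is the characteristic-$2$ bookkeeping in the identification $Z_i = x_i\partial_{x_i}W_c$, where one must be careful that the exponents $v^i_j$ inside $W_c$ are genuine integers (so that the derivative makes sense term by term as a formal derivative over $\unovf$) even though the target ring has characteristic two.
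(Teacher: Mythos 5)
Your proposal is correct and follows exactly the route the paper takes: the identifications $o(L_c,\rho)=W_c(y)$ and $Z_i=(x_i\partial_{x_i}W_c)|_{x=y}$, combined with Theorem \ref{main} for well-definedness and Theorem \ref{nonzero} (plus the module-structure vanishing observation) for non-vanishing, are precisely the computations the paper records immediately before stating the corollary. Nothing is missing, and your remark that invertibility of the $y_i$ lets one pass from $Z_i=0$ to $\partial_{x_i}W_c(y)=0$ is the right bookkeeping.
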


\begin{example}
As an example, let $P$ be the polytope determined by the following data:
\begin{eqnarray*}
v_1=(1,0,0), & v_2=(0,1,0), & v_3=(0,0,1), \\
v_4=(-1,-1,-1), & v_5=(1,1,1), \\
\lambda_1 = \lambda_2 = \lambda_3 = 0, & \lambda_4 = -4, & \lambda_5 = 2.
\end{eqnarray*}
As a complex manifold $X_P$ is the blow-up of $\C P^3$ at one point. Observe that for this choice of $\lambda$'s, $X_P$ is a monotone symplectic manifold. For $c=(1,1,1) \in \textrm{int}(P)$, the torus $L_c$ is a monotone Lagrangian submanifold. The potential function is
$$W_c(x_1,x_2,x_3)= \Big( x_1+x_2+x_3+(x_1 x_2 x_3)^{-1}+ x_1x_2x_3 \Big)T^{2\pi}.$$
The critical points of $W_c$ are given by the conditions
\begin{eqnarray*}
x_1=x_2=x_3, & x^3_1 + x_1 + x^{-3}_1=0,
\end{eqnarray*}
and $W_c$ is zero at these points.
Therefore we equip $L_c$ with the locally constant sheaf $\mathcal{L}_{\rho}$ determined by
$$\rho(l_i) = \xi$$
where $\xi \in \mathbb F_2^{\ast}$ satisfies
$$\xi^3 + \xi + \xi^{-3} = 0.$$
Then the Floer cohomology $HF(R, (L_c, \mathcal{L}_{\rho}))$ is well defined and does not vanish. 

We will now compute the cohomology explicitly. 
We pick the basis 
$$(000),(100),(010),(001),(011),(101),(110),(111)$$
for $CL(R,L_c)$. On this basis, the Floer differential is represented by the matrix

\begin{equation}
\left[
\begin{array}{cccccccc}
0 & \eta T & \eta T & \eta T & 0 & 0 & 0 & x T \\
\eta T & 0 & 0 & 0 & x T & \eta T & \eta T & 0 \\
\eta T & 0 & 0 & 0 & \eta T & x T & \eta T & 0 \\
\eta T & 0 & 0 & 0 & \eta T & \eta T & x T & 0 \\
0 & x T & \eta T & \eta T & 0 & 0 & 0 & \eta T \\
0 & \eta T & x T & \eta T & 0 & 0 & 0 & \eta T \\
0 & \eta T & \eta T & x T & 0 & 0 & 0 & \eta T \\
x T & 0 & 0 & 0 & \eta T & \eta T & \eta T & 0 \\
\end{array}
\right],
\end{equation}
where $\eta = \sqrt{\xi}$ and $x = \eta^3 + \eta^{-3}$. The choice of $\xi$ implies $x=\eta$. It follows that the matrix has rank 2. Therefore, $\dim(HF)=4$ and thus
$$ HF(R, (L_c, \mathcal{L}_{\rho}); \unovf)= (\unovf)^{4}. $$ 
\end{example}

Suppose now that $z$ is such that $\nabla W_c(z)=0$ but $W_c(z) \neq 0$. In this situation the Floer cohomology is not defined.
However, by following an idea of Miguel Abreu and Leonardo Macarini, we can still obtain a lower bound on the number of intersection points under Hamiltonian isotopy: 
Consider the polytope $Q=P \times P$. Then $X_Q = X_P \times X_P$, $R_Q = R_P \times R_P$ and $L_{c \times c} = L_{c} \times L_{c}$. From the definition of the potential we have
$$W_{c \times c}(x,y)=W_c(x) + W_c(y).$$
Since we are working in characteristic 2, $W_{c \times c}(z,z)=0$ so the Floer cohomology $$HF(R_Q,(L_{c \times c},\mathcal{L}_{z \times z}),\unovf)$$ is well-defined. Also $\nabla W_{c \times c}(z,z)=0$, which implies that the cohomology is non-zero. Using Hamiltonian invariance, we obtain

\begin{theorem}
Let $\phi$ be a Hamiltonian diffeomorphism of $X_P$ such that $\phi(R_P)$ and $L_c$ intersect transversely. Then
$$ \sharp \big( \phi(R_P) \cap L_c \big) \geq \sqrt{\textrm{rank} (HF(R_Q,(L_{c \times c},\mathcal{L}_{z \times z}),\unovf))} > 0 $$
\end{theorem}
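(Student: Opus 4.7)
The plan is to exploit the product construction outlined just before the statement: since $X_Q = X_P \times X_P$ is again a Fano toric manifold (its moment polytope is $P \times P$, with normal fan the product fan), the Floer theory developed in Sections 4 and 7 applies in $X_Q$, and in particular the Hamiltonian invariance claimed there gives us what we need. The key geometric observation is that if $\phi$ is a Hamiltonian diffeomorphism of $X_P$ generated by $H_t$, then $\phi \times \phi$ is a Hamiltonian diffeomorphism of $X_Q$ generated by $H_t \oplus H_t$, and
$$(\phi \times \phi)(R_Q) \cap L_{c \times c} \;=\; \bigl(\phi(R_P) \cap L_c\bigr) \times \bigl(\phi(R_P) \cap L_c\bigr).$$
In particular, if $\phi(R_P)$ meets $L_c$ transversely in $X_P$, then $(\phi \times \phi)(R_Q)$ meets $L_{c \times c}$ transversely in $X_Q$, and the total number of intersection points is exactly $\sharp(\phi(R_P) \cap L_c)^2$.

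Next I would invoke Hamiltonian invariance of $HF$ for the product pair. By the discussion preceding the theorem, $W_{c\times c}(z,z) = W_c(z) + W_c(z) = 0$ in characteristic $2$ and $\nabla W_{c \times c}(z,z) = 0$, so by Corollary 5.3 (applied in $X_Q$) the Floer cohomology $HF(R_Q,(L_{c\times c},\mathcal{L}_{z\times z});\unovf)$ is both well-defined and nonzero. The invariance furnished by \cite{fooo}, \cite{fooo8} then gives
$$HF\bigl((\phi \times \phi)(R_Q),\,(L_{c\times c}, \mathcal{L}_{z\times z});\unovf\bigr) \;\cong\; HF\bigl(R_Q,\,(L_{c\times c}, \mathcal{L}_{z\times z});\unovf\bigr).$$
Here one must be slightly careful that the sheaf $\mathcal{L}_{z\times z}$ is transported along the isotopy; this is exactly the bookkeeping carried out in the monotone and FOOO frameworks, and nothing in the product case adds any new subtlety.

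Finally, the rank of any Floer cohomology is bounded above by the rank of the underlying cochain complex, which for transversely intersecting Lagrangians is just the number of intersection points (the sheaf $\mathcal{L}_{z\times z}$ has rank one over $\mathbb{F}_2$, so each intersection point contributes a single generator over $\unovf$). Combining these facts,
$$\sharp\bigl(\phi(R_P) \cap L_c\bigr)^2 \;=\; \sharp\bigl((\phi\times\phi)(R_Q) \cap L_{c\times c}\bigr) \;\geq\; \mathrm{rank}\,HF\bigl(R_Q,(L_{c\times c}, \mathcal{L}_{z\times z});\unovf\bigr) \;>\; 0,$$
and taking square roots yields the desired bound.

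The only real point requiring care—and the step I would expect to be the main obstacle in a careful write-up—is the transfer of the structural results of Sections 3--7 from the single factor $X_P$ to the product $X_Q$: one needs that $X_Q$ is Fano with real Lagrangian $R_P \times R_P$ (clear, since complex conjugation is the product involution), that the bounding cochains and weakly unobstructed framework behave additively under products (standard for $A_\infty$-algebras), and that the formula for $o(L_{c\times c},\mathcal{L}_{z\times z})$ decomposes as $W_c(z) + W_c(z)$ so the vanishing in characteristic $2$ really gives a well-defined differential. Once these product compatibilities are in place, the above three-line argument completes the proof.
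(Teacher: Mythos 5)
Your proposal is correct and follows essentially the same route as the paper: apply Hamiltonian invariance in the product $X_Q$ with $\psi = \phi\times\phi$, note that the product intersection has cardinality $\sharp(\phi(R_P)\cap L_c)^2$, and get positivity from $W_{c\times c}(z,z)=0$ and $\nabla W_{c\times c}(z,z)=0$ via the earlier results. The product compatibilities you flag as the delicate point are exactly what the paper sets up in the discussion immediately preceding the theorem, so nothing is missing.
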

\begin{proof}
By Hamiltonian invariance of Floer cohomology, we have
$$\textrm{rank} (HF(R_Q,(L_{c \times c},\mathcal{L}_{z \times z}),\unovf)) \leq \sharp \big( \psi(R_Q) \cap L_{c \times c} \big) $$
for $\psi$ any Hamiltonian diffeomorphism of $X_Q$. 
If we take $\psi = \phi \times \phi$, we get
\begin{eqnarray*}
\textrm{rank} (HF(R_Q,(L_{c \times c},\mathcal{L}_{z \times z}),\unovf))& \leq & \sharp \big( (\phi \times \phi)(R_P \times R_P) \cap (L_c \times L_c) \big) \\
  &= & \big( \sharp  ( \phi(R_P) \cap L_c) \big)^2. \\
\end{eqnarray*}
\end{proof}

We will now examine the case when the symplectic manifold is monotone, i.e. $[\omega_P]=\lambda c_1(X_P)$.
We rescale the symplectic form so that $\lambda=1$. In this case, $P$ is an integral polytope with the origin {0} in its interior and the structure constants $\lambda_i=-1$ (see \cite{mcduff} for a proof of this). 
Then $L_0$ is the unique monotone Lagrangian submanifold and $E_j$ is independent of $j$. 
\begin{lemma}
$W_0$ has at least one critical point.
\end{lemma}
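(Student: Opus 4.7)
The plan is to convert the geometric claim into a purely algebraic statement about Laurent polynomials over $\mathbb{F}_2$, and then use the algebraic closedness of $\mathbb{F}_2$ together with the Fano structure of $X_P$ to produce a zero of the gradient.

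First I would simplify $W_0$ using the monotone hypothesis. Under the normalization $[\omega_P]=c_1(X_P)$ one has $\lambda_j=-1$ for every facet, and choosing $c=0$ gives
$$E_j \;=\; 2\pi\bigl(\langle 0,v_j\rangle-\lambda_j\bigr)\;=\;2\pi$$
independently of $j$. Hence $W_0(y)=T^{2\pi}f(y)$ with $f(y)=\sum_{j=1}^m y^{v_j}\in\mathbb{F}_2[y_1^{\pm 1},\ldots,y_n^{\pm 1}]$, and finding a critical point $y\in(\mathbb{F}_2^\ast)^n$ of $W_0$ is equivalent to finding a common zero of the $n$ Laurent polynomials
$$y_i\,\partial_i f(y)\;=\;\sum_{j=1}^m v_j^{\,i}\,y^{v_j},\qquad i=1,\ldots,n,$$
i.e.\ a zero in $(\mathbb{F}_2^\ast)^n$ of the Jacobian ideal $J:=(y_1\partial_1 f,\ldots,y_n\partial_n f)$ in the Laurent polynomial ring $R=\mathbb{F}_2[y_1^{\pm 1},\ldots,y_n^{\pm 1}]$.

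Second, since $\mathbb{F}_2$ is algebraically closed (this is precisely the reason the paper works over $\mathbb{F}_2$ rather than $\mathbb{Z}_2$), Hilbert's Nullstellensatz applied to the Laurent polynomial ring reduces the problem to showing that $J$ is a \emph{proper} ideal of $R$, equivalently that the Jacobian ring $R/J$ is non-zero. The remaining step is therefore the combinatorial/algebraic statement that $R/J\neq 0$, and here I expect the main obstacle to lie.

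To show $J$ is proper I would exploit the Fano condition. The Newton polytope of $f$ is $\mathrm{conv}\{v_1,\ldots,v_m\}=P^{\circ}$, the polar dual of $P$, which is reflexive with $0$ as its unique interior lattice point. One route is to invoke Batyrev's presentation of the small quantum cohomology of a Fano toric manifold as the Jacobian ring of $W_0$ (together with its char-$2$ adaptation): then $\dim_{\mathbb{F}_2}R/J$ equals the rank of $H^\ast(X_P;\mathbb{F}_2)$, which is positive, and $J$ is proper. A more self-contained alternative is to observe that $f$ extends, via the dual toric variety $X_{P^\circ}$, to a regular function on a proper scheme over $\mathbb{F}_2$ whose restriction to the open torus is $f$; the critical scheme of this extension is non-empty by a Bernstein/Kushnirenko-type volume count (the normalized volume of $P^\circ$ is positive), and one checks that critical points at the toric boundary divisors are excluded by the reflexivity of $P^\circ$, so critical points actually lie in the open torus $(\mathbb{F}_2^\ast)^n$. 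Either way, the Nullstellensatz then supplies the required $y$.

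The genuinely difficult step is the last one: for non-generic coefficients (here all equal to $1$) neither Kushnirenko-Bernstein nor the naive dimension count immediately yields $R/J\neq 0$, and in characteristic $2$ the Frobenius interferes with standard derivations, so some care with either the quantum-cohomology identification or the compactification argument is needed to rule out degeneracy of the critical scheme.
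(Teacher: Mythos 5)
Your proposal follows essentially the same route as the paper: reduce to the Laurent polynomial $w_0=\sum_j x^{v_j}$ over $\mathbb{F}_2$, invoke the Nullstellensatz (using that $\mathbb{F}_2$ is algebraically closed) to reduce the existence of a critical point to the non-vanishing of the Jacobian ring, and conclude by identifying that ring with the Batyrev quantum cohomology of $X_P$ (the paper cites \cite[Proposition 6.8]{toricfooo} for this, after lifting the relation $\sum f_i\partial_i w_0=1$ to the Novikov ring). Your secondary compactification/Bernstein--Kushnirenko route is not needed and is the only part left incomplete; the primary argument is the paper's.
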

\begin{proof}
$W_0$ has the form
$$W_0=\sum^{m}_{j=1}{x^{v^1_j}_{1} \cdots x^{v^n_j}_{n}\ T^E},$$
for some constant E.
We write $W_0=T^Ew_0$. Then, we can think of $w_0$ as an element of the ring $\mathbb{F}_2[x^{\pm1}_1, \ldots,x^{\pm1}_n ]$. The existence of a critical point of $W_0$ is equivalent to the existence of a critical point of $w_0$. Since $\mathbb{F}_2$ is algebraically closed, the non-existence of a crtitical point of $w_0$ is equivalent to the vanishing of the Jacobian ring 
$$Jac(w_0)=\frac{\mathbb{F}_2[x^{\pm1}_1, \ldots,x^{\pm1}_n ]}{\langle \partial_i w_0, i=0 \ldots n \rangle}.$$
In turn, this is equivalent to the existence of $f_i \in \mathbb{F}_2[x^{\pm1}_1, \ldots,x^{\pm1}_n ]$ such that
$$\sum^{n}_{i=1}f_i \partial_i w_0 =1,$$
which implies
$$\sum^{n}_{i=1}f_i \partial_i W_0 =T^E \ \in \unovf[x^{\pm1}_1, \ldots,x^{\pm1}_n ].$$
Since $T^E$ is invertible in $\unovf$, we obtain $Jac(W_0) \equiv0$. But, in \cite[Proposition 6.8]{toricfooo} it is proved that $Jac(W_0)$ is isomorphic to the Batyrev quantum cohomology $QH^{\omega}(X_P, \unovf)$, which is non-trivial.

So we conclude that $w_0$ has a critical point and therefore so does $W_0$.
\end{proof}

Combining this lemma with the previous theorem we obtain
\begin{corollary}
If $X_P$ is monotone, then the Lagrangians $R_P$ and $L_0$ are non-displaceable.
\end{corollary}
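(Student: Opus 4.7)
The plan is to combine the preceding lemma with the intersection bound theorem just proved for the product construction, applied to the monotone torus fiber $L_0$.

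First, I would invoke the lemma to produce a point $z \in (\mathbb F_2^*)^n$ with $\nabla W_0(z) = 0$. A priori, $W_0(z)$ need not vanish, so one cannot directly conclude that $HF(R_P,(L_0,\mathcal L_z);\unovf)$ is defined. This is precisely the scenario that motivated the product construction: set $Q = P \times P$, so that $X_Q = X_P \times X_P$, $R_Q = R_P \times R_P$, and $L_{0 \times 0} = L_0 \times L_0$. Because we work in characteristic $2$,
\begin{equation*}
W_{0 \times 0}(z,z) = W_0(z) + W_0(z) = 0,
\end{equation*}
and $\nabla W_{0 \times 0}(z,z) = 0$ as well, so Corollary 5.3 applied to $X_Q$ shows that $HF(R_Q,(L_{0 \times 0},\mathcal L_z \oplus \mathcal L_z);\unovf)$ is well-defined and non-zero.

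Next I would apply the preceding theorem (Theorem 5.5) to any Hamiltonian diffeomorphism $\phi$ of $X_P$ for which $\phi(R_P)$ meets $L_0$ transversely. It gives
\begin{equation*}
\sharp(\phi(R_P) \cap L_0) \;\geq\; \sqrt{\operatorname{rank}\, HF(R_Q,(L_{0\times 0},\mathcal L_z \oplus \mathcal L_z);\unovf)} \;>\; 0,
\end{equation*}
so $\phi(R_P) \cap L_0 \neq \emptyset$. For a general Hamiltonian diffeomorphism one first perturbs by a small Hamiltonian isotopy to achieve transversality, observing that non-emptiness of the intersection is stable under sufficiently small perturbations; this gives non-displaceability in full generality.

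There is no serious obstacle here: all of the analytic work has already been carried out in the preceding sections, and the lemma supplies exactly the algebraic input required. The only conceptual point worth emphasizing is the role of characteristic $2$, which converts the potentially obstructing value $W_0(z)$ into $0$ upon doubling, thereby allowing the Floer cohomology on $X_Q$ to be defined even when it is not defined on $X_P$ itself.
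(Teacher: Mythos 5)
Your proof is correct and follows exactly the route the paper intends: the paper simply writes ``Combining this lemma with the previous theorem we obtain'' and you have filled in that combination faithfully, invoking Lemma 5.6 for the critical point $z$, doubling to $X_Q = X_P \times X_P$ so that $W_{0\times0}(z,z) = 2W_0(z) = 0$ in characteristic $2$, and applying Theorem 5.5 to get the intersection lower bound. Your remark about perturbing to achieve transversality is a reasonable detail the paper leaves implicit.
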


Let us now consider the case $X_P= \mathbb{C}P^k$, where $R_P= \mathbb{R}P^k$ and $L_0$ is the Clifford torus $T^k$.
If we don't include sheaves, i.e. $\rho \equiv 1$, we get 
$$W_0(1)=\sum^{m}_{j=1} T^E = mT^E=(k+1)T^E .$$ 
So the Floer cohomology is defined when $k$ is odd. This was the situation the first author studied in \cite{alston}, where it was proved that
\begin{equation} \label{odd}
HF(\mathbb{R}P^{2n-1},T^{2n-1}; \unov)= (\unov)^{2^n}.
\end{equation}
This proves that $\mathbb RP^{2n-1}$ and $T^{2n-1}$ always intersect in at least $2^n$ points under Hamiltonian deformation (if the intersection is transverse).

When $k$ is even we can use the theorem above. First we need to compute $HF(\mathbb{R}P^{2n} \times \mathbb{R}P^{2n}, T^{2n} \times T^{2n}; \unov)$. We will actually prove a slightly more general result:
\begin{proposition}
$$HF(\mathbb{R}P^{2k} \times \mathbb{R}P^{2j},T^{2k} \times T^{2j}) = (\unov)^{2^{k+j}}.$$
\end{proposition}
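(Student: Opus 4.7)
The plan is to identify the combinatorial Floer complex of the product as a tensor product, apply a Künneth-type decomposition over the field $\unov$, and then reduce each tensor factor to the odd-dimensional result (\ref{odd}). By Theorem \ref{main} applied to the product toric manifold $\mathbb{C}P^{2k}\times\mathbb{C}P^{2j}$ (polytope $\Delta^{2k}\times\Delta^{2j}$), the cochain complex $(C,\delta)$ for $(R_{P}\times R_{P'},L_{c\times c'})$ has underlying module $C_k\otimes C_j$ with $C_m:=\bigoplus_{\epsilon\in(\Z_2)^{2m}}\epsilon\cdot\unov$, and differential $\delta=\delta_k\otimes 1+1\otimes\delta_j$. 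In the monotone normalization all Maslov $2$ disc areas equal a common value $E$, and each $\delta_m$ satisfies $\delta_m^2=(2m+1)T^E\,\mathrm{id}=T^E\,\mathrm{id}$ in characteristic $2$ by (\ref{m0}); the product obstruction $o(L_{c\times c'})=T^E+T^E=0$ therefore vanishes, so the Floer cohomology is defined. Setting $\phi_m:=\delta_m+T^{E/2}\mathrm{id}$ gives $\phi_m^2=0$ and, since the two shift terms cancel in characteristic $2$, $\delta=\phi_k\otimes 1+1\otimes\phi_j$. As $\unov$ is a field, Künneth gives
\[
HF(R_{P}\times R_{P'},L_{c\times c'})\cong H(C_k,\phi_k)\otimes_{\unov}H(C_j,\phi_j),
\]
reducing the proposition to the claim $\dim_\unov H(C_m,\phi_m)=2^m$ for each $m\geq 1$.

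To compute one factor, identify $C_k$ with $R_{2k}:=\mathbb{F}_2[x_1,\ldots,x_{2k}]/(x_i^2-1)\otimes\unov$, so $\phi_k$ is multiplication by $T^{E/2}(1+\sum x_i+\prod x_i)$. Set $P':=\prod_{i<2k}x_i$ and $u:=x_{2k}+P'$; then $u^2=0$ and $R_{2k}\cong R_{2k-1}[u]/(u^2)$ as rings, and a short computation gives
\[
\phi_k/T^{E/2}=A+uB,\qquad A:=\textstyle\sum_{i<2k}x_i+P',\quad B:=1+P',
\]
where $A$ is precisely the Floer differential of the monotone pair $\mathbb{R}P^{2k-1}$ vs $T^{2k-1}$ from (\ref{odd}) (so $A^2=0$). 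Writing elements of $R_{2k}$ as $r_0+ur_1$, the operator $\phi_k$ has block form $\begin{pmatrix}A & 0\\B & A\end{pmatrix}$, so $(R_{2k},\phi_k)$ is the mapping cone of $B:(R_{2k-1},A)\to(R_{2k-1},A)$. The associated short exact sequence of complexes
\[
0\to(R_{2k-1},A)\xrightarrow{u\cdot}(R_{2k},\phi_k)\to(R_{2k-1},A)\to 0
\]
yields a $\Z/2$-periodic long exact sequence whose connecting homomorphism is multiplication by $[B]$ on $H(R_{2k-1},A)$. A straightforward count gives $\dim H(C_k,\phi_k)=2\cdot 2^k-2\,\mathrm{rank}(\bar B)$ using $\dim H(R_{2k-1},A)=2^k$ from (\ref{odd}), so it suffices to show $\mathrm{rank}(\bar B)=2^{k-1}$; equivalently, since $\bar B^2=0$, that the $\Z/2$-periodic complex $(H(R_{2k-1},A),\bar B)$ is acyclic.

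The main obstacle is establishing this acyclicity. The upper bound $\mathrm{rank}(\bar B)\leq 2^{k-1}$ follows from duality: the Frobenius (socle) pairing $\langle r,s\rangle:=\mathrm{coef}_{y_1\cdots y_{2k-1}}(rs)$ on $R_{2k-1}\cong\Lambda(\mathbb{F}_2^{2k-1})$ (in coordinates $y_i=1+x_i$) is non-degenerate, and since $A$ and $B$ act by multiplication they are self-adjoint, so the pairing descends to a non-degenerate pairing on $H(R_{2k-1},A)$ under which $\bar B$ is self-adjoint; combined with $\bar B^2=0$, this forces $\mathrm{rank}(\bar B)\leq\tfrac12\dim H(R_{2k-1},A)=2^{k-1}$. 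For the matching lower bound, the strategy is to exploit two independent acyclicities of $R_{2k-1}$: the splitting $(\Z_2)^{2k-1}=\langle(1,\ldots,1)\rangle\oplus H$ for some complementary subgroup $H\cong(\Z_2)^{2k-2}$ makes $R_{2k-1}$ a free rank-$2^{2k-2}$ module over $\mathbb{F}_2[B]/(B^2)$, so $H(R_{2k-1},B)=0$; moreover the identity $A+B=\sum_i y_i$ realises the sum as multiplication by a nonzero degree-$1$ element of the exterior algebra, so $H(R_{2k-1},A+B)=0$ too. Combining these acyclicities with a diagram chase through the commuting square-zero pair $(A,B)$---equivalently, using the mapping-cone interpretation $(R_{2k},\phi_k)=\mathrm{Cone}(B)$ together with induction on $k$---forces $\bar B$-acyclicity on $H(R_{2k-1},A)$, hence $\mathrm{rank}(\bar B)=2^{k-1}$. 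Putting everything together, $\dim H(C_m,\phi_m)=2^m$, and Künneth yields $\dim HF=2^k\cdot 2^j=2^{k+j}$, as required.
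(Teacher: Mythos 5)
Your overall architecture is sound and genuinely different from the paper's: the paper fixes the first factor and inducts on the second, using an explicit structural description of $\Ker(d_{k,J})$ modelled on Lemma 4.3 of \cite{alston}, whereas you first apply a K\"unneth decomposition (valid here: over the field $\unov$ any differential module splits as its homology plus a contractible summand, so $H(C\otimes C')\cong H(C)\otimes H(C')$ even in the absence of a grading) to reduce everything to the single-factor claim $\dim H(C_m,\phi_m)=2^m$, and then exhibit $(C_k,\phi_k)$ as the mapping cone of $B=1+x_1\cdots x_{2k-1}$ acting on the odd-dimensional complex $(R_{2k-1},A)$ of (\ref{odd}). These reductions all check out, as does the upper bound $\mathrm{rank}(\bar B)\le 2^{k-1}$ via the socle pairing.

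The gap is the matching lower bound, i.e.\ the asserted acyclicity of $(H(R_{2k-1},A),\bar B)$ --- which is exactly where the content of the proposition lives. You claim this is ``forced'' by the two acyclicities $H(R_{2k-1},B)=0$ and $H(R_{2k-1},A+B)=0$ together with an unspecified diagram chase, but those hypotheses alone do not imply the conclusion. Counterexample: on $V=\F_2^4=\langle a,b,c,d\rangle$ let $A$ send $a\mapsto b$ and kill $b,c,d$, and let $B$ send $a\mapsto d$, $c\mapsto b$ and kill $b,d$. Then $A$ and $B$ commute, $A^2=B^2=0$, $H(V,B)=0$ and $H(V,A+B)=0$, yet $\bar B$ vanishes on the two-dimensional space $H(V,A)$, so it is not acyclic there. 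Hence any correct argument must invoke more than the three stated facts --- presumably the Frobenius-algebra structure (both operators are self-adjoint multiplication operators), or an honest induction on $k$ with an explicit base case --- and that argument is not supplied. Until it is, the claim $\dim H(C_m,\phi_m)=2^m$, and with it the proposition, remains unproved.
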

\begin{proof}
Let $P$ be a polytope in $\R^{2k+2j}$ defined by
\begin{displaymath}
\begin{array}{l}
v_i=e_i \ \textrm{for} \ i \in \{1,\ldots,2k+2j\} \\
v_{2k+2j+1}=-e_1-\ldots-e_{2k} \\
v_{2k+2j+2}=-e_{2k+1}\ldots,-e_{2k+2j} \\
\lambda_i=0 \ \textrm{for} \ i \in \{1,\ldots,2k+2j\} \\
\lambda_{2k+1}=-2k-1 \\
\lambda_{2k+2j+2}=-2j-1.
\end{array}
\end{displaymath}
The toric manifold associated to this polytope is $\C P^{2k} \times \C P^{2j}$ with the monotone symplectic form. If we take $c=(1,\ldots,1)$ then $L_c$ is the monotone Lagrangian $T^{2k} \times T^{2j}$.

By Theorem \ref{main}, we just need to compute the cohomology of the complex (\ref{formula}). We will denote the complex by $C(k,j)$ and write the differential $d_{k,j} = A + \eta$ where $A= \sum^{2k+2j+1}_{i=1} f_i$ and $\eta=f_{2k+2j+2}$. Note that in this description of the differential we are ignoring the energy terms. We can do this because in this situation, $E_i=T^{2\pi}$ for all $i$. This way the differential above differs from the one in (\ref{formula}) by a factor of $T^{2\pi}$ which is invertible in $\unov$.

We will use induction. For $k=j=1$ we can explicitly write down a matrix representing $d_{1,1}$. Then one gets that the rank of this matrix is 6 and so the cohomology has dimension 4 as we claimed. 

Note that there is an obvious symmetry in $i$ and $j$. So for the induction step, it's enough to assume the claim for $(k,j)$ and prove the case $(k,j+1)$.
The rest of the proof is very similar to the one in \cite{alston}.

We will denote $J=j+1$.
Let $\pi: C(k,J) \to C(k,j)$ be the map that forgets the last two coordinates, i.e.
$$\pi : (\epsilon_1,\ldots,\epsilon_{2k+2j},\epsilon_{2k+2j+1},\epsilon_{2k+2J}) \mapsto (\epsilon_1,\ldots,\epsilon_{2k+2j}).$$ 
This map is surjective and is a chain map: for $x \in C(k,j)$ and $a,b \in \Z_2$, we calculate
\begin{eqnarray*}
&& \pi \circ d_{k,J}(x,a,b) = \\
&& =  \pi \big( (A_{k,j}x,a,b)+(x,a+1,b)+(x,a,b+1)+(\eta_{k,j} x,a+1,b+1)\big)  \\
&& =  A_{k,j}x + x + x + \eta_{k,j} x = d_{k,j} x \\
&& = d_{k,j} \circ \pi(x,a,b).
\end{eqnarray*}
\textbf{Claim :} Let $x \in Ker(d_{k,J})$. Then $x$ can be uniquely written as
$$x=(u,0,0)+(u,1,1)+(v,1,0)+(v,0,1)+(w,0,0)+(w,1,0)+(0,0,t)$$
with $u,v \in C(k,j)$ and $w,t \in Ker(d_{k,j})$ satisfying
\begin{equation}\label{claim}
\left\{
\begin{array}{l}
d_{k,j}v=w+t+\eta w,\\
d_{k,j}u=w+\eta t+\eta w.
\end{array}
\right.
\end{equation} 
This claim can be proved in the same way as Lemma 4.3 in \cite{alston}.

Now, consider the map
$$\alpha:C(k,j) \oplus C(k,j) \oplus Ker(d_{k,j}) \oplus Ker(d_{k,j}) \to \Ima(d_{k,j}) \oplus Ker(d_{k,j})$$
$$(u,v,w,t)\mapsto(d_{k,j}\eta u+d_{k,j} v,d_{k,j}\eta u +w+t+\eta w).$$
This map is clearly onto. Also, using the fact that $\eta^2=id$, we can check that 
$$ Ker(\alpha)=\big\{(u,v,w,t) \ \textrm{satisfying} \ (\ref{claim}) \big\}.$$
This shows
$$\dim(\Ker(d_{k,J})=\dim(\Ker(\alpha))$$
$$=2\dim(C(k,j))+2\dim(\Ker(d_{k,j}))-\dim(\Ima(d_{k,j}))-\dim(\Ker(d_{k,j}))$$
$$=2\dim(C(k,j))+\dim(H^{\ast}(d_{k,j})).$$
Therefore
\begin{eqnarray*}
\dim(H^{\ast}(d_{k,J}))&=& 2\dim(\Ker(d_{k,J}))-\dim(C(k,J)) \\
&=&  4\dim(C(k,j))+2\dim{H^{\ast}(d_{k,j})}-4\dim(C(k,j)) \\
&=&  2\dim{H^{\ast}(d_{k,j})}
\end{eqnarray*}
This completes the inductive step.
\end{proof}

Now we can apply the previous theorem to this case. Combining with (\ref{odd}), we get
\begin{corollary} 
$$\sharp \big( \psi ( \mathbb{R}P^{k} ) \cap T^{k} \big) \geq 2^{ \big\lceil \frac{k}{2} \big\rceil }. $$
\end{corollary}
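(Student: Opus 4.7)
The plan is to split the argument into two cases according to the parity of $k$, and in each case simply invoke a result already proved earlier in this section. Since $\lceil k/2 \rceil = n$ when either $k = 2n-1$ or $k = 2n$, both cases need to produce the same lower bound $2^n$.

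For odd $k = 2n-1$, equation (\ref{odd}) already records that $HF(\mathbb{R}P^{2n-1}, T^{2n-1}; \unov) = (\unov)^{2^n}$. Hamiltonian invariance of Floer cohomology then bounds the number of transverse intersection points of $\psi(\mathbb{R}P^{2n-1})$ with $T^{2n-1}$ below by $\operatorname{rank} HF = 2^n$, which is exactly $2^{\lceil k/2 \rceil}$.

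For even $k = 2n$, the direct Floer cohomology is not defined because $W_0(1,\ldots,1) = (k+1)T^E = T^E \neq 0$ in $\mathbb{F}_2$. However, a quick calculation (using $W_0 = (x_1 + \cdots + x_k + (x_1 \cdots x_k)^{-1})T^E$ for $\mathbb{C}P^k$) shows that $\partial_i W_0|_{x=1} = (1-1)T^E = 0$, so $\nabla W_0(1) = 0$. Hence the doubling trick of Theorem \ref{product} applies with $z = (1,\ldots,1)$. Applying the preceding proposition with $k = j = n$ gives
$$HF(\mathbb{R}P^{2n} \times \mathbb{R}P^{2n}, T^{2n} \times T^{2n}; \unov) = (\unov)^{2^{2n}},$$
and Theorem \ref{product} then yields
$$\sharp\bigl(\psi(\mathbb{R}P^{2n}) \cap T^{2n}\bigr) \geq \sqrt{2^{2n}} = 2^n = 2^{\lceil 2n/2 \rceil}.$$

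There is no genuinely hard step: all of the work has been absorbed into the earlier proposition computing $HF$ for the product (which was the combinatorial/inductive heart of the argument) and into the general doubling Theorem \ref{product}. What remains is purely bookkeeping — identifying the correct critical point of the potential for $\mathbb{C}P^k$, checking the parity case distinction, and matching the exponents so that both cases produce $2^{\lceil k/2 \rceil}$.
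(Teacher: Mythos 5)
Your proposal is correct and follows exactly the route the paper intends: the odd case is immediate from (\ref{odd}) plus Hamiltonian invariance, and the even case combines the critical point $z=(1,\ldots,1)$ of $W_0$ for $\C P^{2n}$ with the product computation $HF(\R P^{2n}\times\R P^{2n},T^{2n}\times T^{2n})=(\unov)^{2^{2n}}$ and the doubling theorem to get $\sqrt{2^{2n}}=2^n$. The bookkeeping (parity split, $\lceil k/2\rceil=n$ in both cases, verification that $\nabla W_0(1)=0$) is all accurate.
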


We believe that this bound is optimal. Although we do not have a proof of this fact, we have checked it in dimension 2. 
Through experimentation with the help of a computer, we have found a matrix $M \in U(3)$ that realizes the lower bound
$$\sharp \big( M ( T^2 ) \cap \mathbb{R}P^{2} \big) = 2.$$
Note that $U(n+1)$ acts on $\C P^n$ by Hamiltonian diffeomorphisms. This shows that in dimension 2 the bound we obtained is sharp. We expect the same to hold in higher dimensions.

\section{Lagrangian Floer Theory}

Recall that our approach to the Floer cohomology of $(R,(L_c,\mathcal L_\rho))$ is to use the theory developed in \cite{fooo} (and its extension to the $\Z$-coefficient case developed in \cite{fooo8}) and then show that this reduces to simply counting Maslov index 1 holomorphic strips (for the standard toric complex structure).
In this section we will review the Floer theory from \cite{fooo} and \cite{fooo8}. 
The explanation of how the obstructions $o(L_c),o(R)$ are to be viewed in this context and the proof that the Floer differential simply counts strips will be given in the next section.

Let $ (M,\omega) $ be a symplectic manifold and $J$ an almost complex structure compatible with $\omega$. Let 
$\mathcal{M}^{J}_{k+1}(L;\beta)$ 
be the moduli space of stable maps of genus zero with boundary on $L$ and in a fixed homotopy class $\beta \in \pi_2(M,L)$. 
$\mathcal{M}^{J}_{k+1}(L;\beta)$ is the compactification of
$$ \Big\{ \big(u,(z_0, \ldots, z_k)\big) \Big| \ u:(D^2,\partial D^2)\rightarrow(M,L), \ \bar{\partial}_J u=0, \ z_i \in \partial D^2, \  [u]=\beta \Big\} \Big/ PSL(2,\R). $$
This moduli space has natural evaluation maps $ev_i:\mathcal{M}^{J}_{k+1}(L;\beta) \to L $ given by
$$ev_i \big( u,(z_0,\ldots,z_k) \big)=u(z_i) \in L, \ i=0,\ldots,k.$$
Let $C(L)$ be a suitably chosen subcomplex of the singular chain complex of $L$ (see Section 7.2 of \cite{fooo} for the detailed construction of $C(L)$) with coefficients in $\Z_2$. Given $k$ chains $P_1,\ldots,P_k \in C(L)$ consider the fiber product
\begin{equation}\label{fiber}
\mathcal M^{J}_{k+1}(\beta;L;P_1,\ldots,P_k):=\mathcal{M}^{J}_{k+1}(\beta;L) \times _{(ev_1,\ldots,ev_k)}(P_1 \times \ldots \times P_k).
\end{equation}
Then
\begin{theorem}[\cite{fooo8} Theorem 34.11]\label{thm:main_kuranishi_space_thm}
The moduli space (\ref{fiber}) has a Kuranishi structure. If $J$ is spherically positive (the first Chern class is positive on every non-constant J-holomorphic sphere) then the Kuranishi structure admits a family $\mathfrak s^\epsilon$ of single-valued piece-wise smooth sections of the obstruction bundle and a decomposition 
\begin{eqnarray*}
&\mathcal M^{J}_{k+1}(\beta;L;P_1,\ldots,P_k)^{\mathfrak s^\epsilon}=&\\
&\mathcal M^{J}_{k+1}(\beta;L;P_1,\ldots,P_k)^{\mathfrak s^\epsilon}_{free}\cup \mathcal M^{J}_{k+1}(\beta;L;P_1,\ldots,P_k)^{\mathfrak s^\epsilon}_{fix}
\end{eqnarray*}
of the perturbed moduli space such that
\begin{enumerate}
\item $\mathcal M^{J}_{k+1}(\beta;L;P_1,\ldots,P_k)^{\mathfrak s^\epsilon}_{free}$ is a PL manifold,\\
\item $\mathcal M^{J}_{k+1}(\beta;L;P_1,\ldots,P_k)^{\mathfrak s^\epsilon}$ has a triangulation,\\
\item $\mathcal M^{J}_{k+1}(\beta;L;P_1,\ldots,P_k)^{\mathfrak s^\epsilon}_{fix}$ is contained in a subcomplex of dimension less than or equal to
$$\dim \mathcal M^{J}_{k+1}(\beta;L;P_1,\ldots,P_k)^{\mathfrak s^\epsilon}_{free}-2,$$\\
\item and $\lim_{\epsilon\to0}\mathfrak s^{\epsilon}=\mathfrak s,$ where $\mathfrak s=\bar\partial$ is the unperturbed section.
\end{enumerate}
\end{theorem}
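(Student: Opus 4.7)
The plan is to follow the construction in \cite{fooo8}, building the Kuranishi structure and the single-valued perturbation sections in tandem. First I would produce the Kuranishi structure on $\mathcal{M}^{J}_{k+1}(\beta;L;P_1,\ldots,P_k)$ by the usual recipe: at each stable map $(u,\vec z)$, choose a finite-dimensional subspace $E_u$ of $\Omega^{0,1}(u^*TM)$ that covers the cokernel of the linearized operator $D_u\bar\partial$, apply the implicit function theorem in the enlarged setup $D_u\bar\partial + E_u$, and obtain a Kuranishi chart with isotropy group $\Gamma_u=\mathrm{Aut}(u,\vec z)$. Taking the fiber product with $P_1\times\cdots\times P_k$ along the evaluation maps equips the constrained moduli space with Kuranishi charts, and the standard gluing analysis provides compatible coordinate changes at the boundary strata coming from disc and sphere bubbling and node formation.

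Next I would isolate the locus where isotropy can occur. The fixed part $\mathcal M^{\mathfrak s^\epsilon}_{fix}$ consists of configurations whose virtual isotropy is non-trivial; in the setting at hand this forces the stable map to contain at least one multiply covered sphere component (multiply covered disc components are excluded by the geometric setup we use for $L_c$ and $R$, or are absorbed into the same argument). Here the spherical positivity hypothesis is essential: every non-constant $J$-holomorphic sphere has $c_1>0$, so a simple sphere in class $\alpha$ contributes a moduli space of expected real dimension $2n-6+2c_1(\alpha)$. Bubbling off such a sphere from the main component therefore drops the dimension of the stratum by at least $2c_1(\alpha)\geq 2$, giving the required codimension estimate
\[
\dim\mathcal M^{\mathfrak s^\epsilon}_{fix}\leq \dim\mathcal M^{\mathfrak s^\epsilon}_{free}-2.
\]

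With this estimate in hand, I would construct the perturbation $\mathfrak s^\epsilon$ inductively over a triangulation of the moduli space, ordered by the stratification coming from the boundary/bubble structure. On the free stratum the isotropy is trivial, so the Kuranishi charts descend without group quotients, and classical obstruction theory over a simplicial complex extends single-valued piece-wise smooth perturbations cell-by-cell, in a way that is transverse to zero; this produces the PL manifold structure on $\mathcal M^{\mathfrak s^\epsilon}_{free}$. Across the fix strata one extends by continuity, accepting that transversality may fail there, but the codimension-two estimate confines the failure to a subcomplex of dimension at most $\dim\mathcal M^{\mathfrak s^\epsilon}_{free}-2$. Scaling the perturbation by $\epsilon$ and organizing the inductive extension so that the scale tends to $0$ yields $\mathfrak s^\epsilon\to\bar\partial=\mathfrak s$ as $\epsilon\to 0$.

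The main obstacle is the codimension-two estimate for the isotropy locus: without spherical positivity, multiply covered spheres of non-positive Chern number can form moduli spaces of large dimension, producing isotropy strata of the same dimension as the free part and forcing the use of multi-valued perturbations (and hence $\mathbb{Q}$-coefficients). A secondary technical difficulty is the combinatorial coherence of the perturbations across the nested stratification of boundary strata: one must induct on the partial order of strata (by number of bubble components and by energy) and verify that the extension of $\mathfrak s^\epsilon$ from lower to higher strata respects the gluing maps and that the resulting subcomplexes fit together into a global triangulation of $\mathcal M^{\mathfrak s^\epsilon}_{k+1}(\beta;L;P_1,\ldots,P_k)$.
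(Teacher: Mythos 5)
Your overall architecture (build the Kuranishi structure, locate the isotropy in sphere components, bound the codimension of the fixed locus, then construct the single-valued perturbation stratum by stratum) matches the route taken in \cite{fooo8} and summarized in the remark following the theorem. The gap is in the codimension-two estimate, which is the heart of the matter. Your estimate is a \emph{virtual}-dimension count: bubbling off a multiply covered sphere drops the expected dimension of the stratum. But the theorem needs the \emph{actual} dimension of the perturbed zero set on the fixed part, and these differ exactly where the isotropy is non-trivial. A single-valued, $\Gamma$-equivariant perturbation can only be made transverse to the invariant part $E^{\Gamma}$ of the obstruction bundle over the locus $U(\Gamma_x)$ of points with prescribed isotropy; its component in a complement $E^{\perp}$ must be set to zero to preserve equivariance. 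Hence the zero set there is a manifold of dimension $\dim U(\Gamma_x)-\dim E^{\Gamma}_x$, which exceeds the virtual dimension of that stratum by $\dim E^{\perp}_x$. Your third paragraph (``accepting that transversality may fail there, but the codimension-two estimate confines the failure\ldots'') therefore assumes precisely what must be proved: a bound on virtual codimension does not by itself confine the non-transverse part of the zero set to actual codimension two.

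The correct argument, as in \cite{fooo8} (Lemmas 35.72, 35.74, 35.75 and Proposition 35.63), identifies the actual dimension $\dim U(\Gamma_x)-\dim E^{\Gamma}_x$ with the number of deformation parameters of the underlying curve plus the \emph{equivariant} index of $D\bar\partial$ at $u$, and shows this equivariant index equals the index of the reduced marked model: the map induced on the quotient of the domain by $\Gamma$, with extra marked points added for stability. Spherical positivity enters here rather than in the naive bubbling count: a multiply covered sphere component of $u$ descends to its underlying simple sphere in the reduced model, and since $c_1>0$ on every non-constant sphere, the Chern number strictly decreases under this reduction. Combined with the fact that non-trivial isotropy forces at least one sphere component (a disc component carrying a special boundary point has no non-trivial automorphisms --- this is general, not a feature of $L_c$ and $R_P$ as you suggest), this shows the virtual dimension of the reduced marked model, hence the actual dimension of the fixed stratum, is at least two less than $\dim \mathcal M^{J}_{k+1}(\beta;L;P_1,\ldots,P_k)^{\mathfrak s^{\epsilon}}_{free}$. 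Without this step the construction only yields the multi-valued (multisection) theory and rational coefficients.
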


\begin{rmk}
This theorem is the basic result needed to define Floer cohomology and is the analog of Proposition 7.2.35 in\cite{fooo} where $\mathbb Q$-coefficients are used.
The ``free'' subscript on the moduli spaces refers to points with trivial isotropy and the ``fix'' subscript refers to points with non-trivial isotropy.
The fact that the set of points with non-trivial isotropy has codimension at least two is the essential feature that allows $\Z$-coefficients to be used; this fact in turn hinges on the spherical positivity of the complex structure $J$.
Since these issues are at the heart of what allows us to use $\Z_2$-coefficients in this paper we will take some time to explain them in detail.

Let us begin by looking at a similar (but easier to understand) situation that arises with closed rational Gromov-Witten invariants, following the approach taken in \cite{ms}.
Namely, let $(M,\omega,J)$ be a spherically postive symplectic manifold, and let $\mathcal M_k^J(A)$ be the moduli space of $J$-holomorphic spehers of homotopy class $A\in\pi_2(M)$ with $k$ marked points.
If the spheres in $\mathcal M_k^J(A)$ are regular, then the smooth points (i.e. interior and non-orbifold points) of $\mathcal M_k^J(A)$ will form a manifold and Gromov-Witten invariants can be defined by doing intersection theory with this moduli space.
In order for the intersection theory to be well-behaved $\mathcal M_k^J(A)$ must be a pseudo-cycle, that is the non-smooth points must have codimension two or more.
To be more precise, the non-smooth points must have actual codimension two not just virtual codimension two because no perturbations are being done.
(Boundary points always have virtual codimension at least two.)
Orbifold points come from multiply covered spheres, and non-interior points come from maps with more than one component.
Under the assumption that $c_1>0$ for all $J$-holomorphic spheres and the spheres are regular, it is easy to see using standard dimension counting arguments that these points are of actual (real) codimension two or more.

The above construction shows that the Gromov-Witten invariants are well-defined as integers in the spherically positive case.
If the manifold is not spherically positive, then this argument does not work, and a more elaborate construction such as that in \cite{fo-acgwi} needs to be used.
In particular, the orbifold points cannot be ignored, perturbations need to be done to ensure that the boundary has the correct codimension, and in general the invariants are rational numbers instead of integers.

Now consider the situation of Theorem \ref{thm:main_kuranishi_space_thm}.
We will first explain the properties that the single-valued section constructed in \cite{fooo8} has, and then explain why spherical positivity implies that the fixed point part has actual codimension at least two (not just virtual codimension two).

The first thing to be said about the single-valued section $\mathfrak s^\epsilon$ is that it is not in general transverse to the zero-section.
Instead, locally the situation is as follows: let $U\subset \R^n$ be an open set, $\Gamma$ a finite group acting on $U$, $E$ an equivariant vector bundle over $U$ (i.e. there is also a $\Gamma$ action on $E$), and $s:U\to E$ is a section.
Let $E^\Gamma\subset E$ be the subbundle (maybe subsheaf is a more correct term) fixed by the $\Gamma$ action, that is for $x\in U$ the fiber is
\begin{displaymath}
E^\Gamma_x=\set{v\in E_x}{\gamma\cdot v=v\, \forall \gamma\in \Gamma_x}
\end{displaymath}
and let $E^\perp$ be a complement to $E^\Gamma$.
A single-valued perturbation $s_\epsilon$ of $s$ can be constructed such that if $s_\epsilon(x)=0$ then $s_\epsilon$ when restricted to the submanifold $U(\Gamma_x)=\set{y\in U}{\Gamma_x\sim\Gamma_y}$ is transverse to the zero section of $E^{\Gamma}|U(\Gamma_x)$.
Setting the $E^\perp$ component of $s_\epsilon|U(\Gamma_x)$ to $0$ then makes $s_\epsilon|U(\Gamma_x)$ a $\Gamma$-equivariant section.
It follows that $s^{-1}_\epsilon(0)\cap U(\Gamma_x)$ is a smooth manifold of dimension $\dim U(\Gamma_x)-\dim E^{\Gamma}_x$, and hence has a smooth triangulation of the same dimension that descends to the quotient $(s^{-1}_\epsilon(0)\cap U(\Gamma_x))/\Gamma$.
The triangulations of $(s^{-1}_\epsilon(0)\cap U(\Gamma_x))/\Gamma$ for different $\Gamma_x$'s can be pieced together to give a triangulation of $s^{-1}_\epsilon(0)/\Gamma$.
If the action of $\Gamma$ is effective, then the top dimensional stratum will have the correct dimension, namely $\dim U-\textrm{rank } E$.
A stratum coming from $U(\Gamma_x)$ with non-trivial isotropy, which has virtual codimension equal to the codimension of $U(\Gamma_x)$ in $U$, will have actual codimension equal to this plus $\dim E^\perp_x$ because the actual dimension of $(s^{-1}_\epsilon(0)\cap U(\Gamma_x))/\Gamma$ is $\dim U(\Gamma_x)-\dim E^{\Gamma}_x$.
(For simplicity we have tacitly assumed that $U(\Gamma_x)$ has only one component; if it has more than one component the same construction works by considering the different components seperately.
In \cite{fooo8} they use the notation $U(\Gamma_x,i)$ to refer to the different components of $U(\Gamma_x)$.)

A general Kuranishi space $X$ with single-valued section $\mathfrak s$ locally looks like the situation described in the previous paragraph.
The local single-valued perturbations and triangulations can be glued together to get a single-valued perturbation $\mathfrak s^\epsilon$ of $\mathfrak s$ along with a triangulation of its zero-set.
The top-dimensional stratum of triangulation of the zero-set will have the correct dimension, i.e. its actual dimension will equal its virtual dimension.
But the simplices in the triangulation of the lower dimensional stratum (stratum coming from non-trivial isotropy groups) will have actual dimension that differs from the expected dimension (i.e. virtual dimension) in the same way as described above.
This is the content of Proposition 35.52 in \cite{fooo8}.

Applying the proposition to the Kuranishi space $\mathcal M^{J}_{k+1}(\beta;L;P_1,\ldots,P_k)$ proves most of Theorem \ref{thm:main_kuranishi_space_thm}:
$\mathcal M^{J}_{k+1}(\beta;L;P_1,\ldots,P_k)^{\mathfrak s^\epsilon}$ is the zero-set of the perturbed section $\mathfrak s^\epsilon$, it has a triangulation such that the smooth part is a manifold of the correct dimension, and the decomposition
\begin{eqnarray*}
&\mathcal M^{J}_{k+1}(\beta;L;P_1,\ldots,P_k)^{\mathfrak s^\epsilon}=&\\
&\mathcal M^{J}_{k+1}(\beta;L;P_1,\ldots,P_k)^{\mathfrak s^\epsilon}_{free}\cup \mathcal M^{J}_{k+1}(\beta;L;P_1,\ldots,P_k)^{\mathfrak s^\epsilon}_{fix}
\end{eqnarray*}
is the decomposition into the parts with trivial and non-trivial isotropy, which is respected by the triangulation.
The final point to address is item 3 which states that the (actual) codimension of the triangulation of the fixed part is at least two.
The proof of this item uses spherical positivity, which up to this point we have not yet used.

To address item 3 we need to explain why $\mathcal M^{J}_{k+1}(\beta;L;P_1,\ldots,P_k)_{fix}^{\mathfrak s^\epsilon}$ has actual codimension at least two.
The inclusion of $P_1,\ldots,P_k$ makes no essential difference in this argument so it suffices to explain why $\mathcal M^{J}_{k+1}(\beta;L)_{fix}^{\mathfrak s^\epsilon}$ has actual codimension at least two in $\mathcal M^{J}_{k+1}(\beta;L)^{\mathfrak s^\epsilon}$.
Recall that a curve is a nodal Riemann surface (with boundary) with maybe multiple components; a map is a holomorphic map from a curve into $M$.
Since the curves underlying the elements of $\mathcal M^{J}_{k+1}(\beta;L)$ have at least one disc component with at least one special boundary point, and such disc components have no non-trivial automorphisms, the elements of $\mathcal M^{J}_{k+1}(\beta;L)$ with non-trivial isotropy have to have at least one sphere component.
(Thus the virtual codimension of the fixed part is at least two, but again we need more than this.)
If $x=[u]\in\mathcal M^{J}_{k+1}(\beta;L)_{fix}^{\mathfrak s^\epsilon}$ is a point in the interior of a simplex contained in the triangulation of the fixed part, then, using the notation from before, the dimension of the simplex is $\dim U(\Gamma_x)-\dim E^{\Gamma}_x$.
Subtracting the number of deformation parameters of the underlying curve that keep the same combinatorial type of the curve (i.e. deforming the curve within its stratum in the moduli space of all curves) from this dimension gives the equivariant index of the linearized $\bar\partial$ complex at u, see Lemma 35.72 in \cite{fooo8}.

By Proposition 35.63 and Lemma 35.74 in \cite{fooo8}, this equivariant index is equal to the virtual dimension of the so-called reduced marked model.
The underlying curve of the reduced marked model is the underlying curve of the map $[u]$ modded out by $\Gamma$, with additional marked points added to make the curve a stable curve.
The map $[u]$ descends to the reduced marked stable curve, and this is the reduced marked model.

The crux of the matter is then this:
Spherical positivity implies that the maps on the sphere components of the reduced marked model with extra marked points added have Chern number strictly less than the Chern number of the corresponding maps in the original $u$ (that is, the maps in $u$ are multiple covers of the maps in the reduced marked model).
This, in addition to the fact that the reduced marked model has at least one sphere component, is enough to show that the virtual dimension of the moduli space of reduced marked models is at least two less than the virtual dimension of $\mathcal M^{J}_{k+1}(\beta;L)$ (Lemma 35.75 in \cite{fooo8}).
Putting everything together shows that the actual codimension of $\mathcal M^{J}_{k+1}(\beta;L)_{fix}^{\mathfrak s^\epsilon}$ in $\mathcal M^{J}_{k+1}(\beta;L)^{\mathfrak s^\epsilon}$ is at least two.

Now, with Theorem \ref{thm:main_kuranishi_space_thm} in hand, and similar statements for other moduli spaces, the machinery developed in \cite{fooo} goes through with $\Z$-coefficients.
Indeed, the perturbation $\mathfrak s^\epsilon$ is equivariant and single-valued so the triangulation of its zero set can be described as a singular chain with $\Z$-coefficients.
The same holds for other moduli spaces and thus the Floer theory is defined over $\Z$.
To be more explicit, let us highlight the reason that the fixed part of the moduli space needs to have codimension at least two:
The single valued perturbation is not transverse to the zero section, but the machinery developed in \cite{fooo} always assumes that the perturbations are transverse.
The fixed part having actual codimension at least two implies that the part where the section is not transverse also has actual codimension at least two.
Since the $A_\infty$-relations underlying Floer theory are a consequence of the properties of the codimension one boundary of the moduli space, non-transversality on codimension two does not affect the validity of the arguments.
(Compare for example to the notion of a pseudo-cycle as in \cite{ms}, mentioned above in the context of closed Gromov-Witten theory.)
\end{rmk}

We define
  \begin{eqnarray}
\nonumber
  m^{J}_{0,\beta}(1)&=&\left\{
  \begin{array}{ll}
  ev_{0*}[\mathcal M^{J}_1(\beta;L)^{\mathfrak s}] & \beta\neq0\\
  0 & \beta=0
  \end{array}
  \right.,\\
  m^{J}_{1,\beta}(P,f)&=&\left\{
  \begin{array}{ll}
  ev_{0*}[\mathcal M^{J}_2(\beta;L)\times_{ev_1}P^{\mathfrak s}] & \beta\neq0\\
  \partial P & \beta=0
  \end{array}
  \right.,\\
\nonumber
  m^{J}_{k,\beta}(P_1,\ldots,P_k)&=&ev_{0*}[\mathcal M^{J}_{k+1}(\beta;L)\times_{ev_1\times\cdots\times ev_k}(P_1\times\cdots\times P_k)^{\mathfrak s}] \textrm{ for $k\geq2$.}
  \end{eqnarray}
Now consider $\unovof=\Big\{ \sum^{\infty}_{i=1}{a_iT^{\lambda_i}} \in \unovf \Big| \ \lambda_i \geq 0 \Big\}$ and let $C(L,\unovof)=C(L) \hat{\otimes}_{\Z_2} \unovo$ be the completion of the tensor product with respect to the natural filtration on $\unovo$. 

\begin{rmk}
$C(L,\unovof)$ inherits a grading from $C(L)$ given by the codimension of the chains. However the operations we defined do not respect this grading, for example $m_1$ is not homogeneous of degree 1. 
We could fix this by adding to the Novikov ring a parameter $e$ of degree 2 as is done in \cite{fooo}. 
We do not do this since we prefer to work with the simpler Novikov ring; as a consequence the Floer cohomology does not have a grading.

We would also like to point out that $C(L)$ has chains of dimension greater than the dimension of $L$.
Indeed, this is true for the full singular chain complex (of course cycles with dimension greater than $\dim L$ will be $0$ in homology), and it is also true for the suitably chosen $C(L)$ because $C(L)$ needs to contain the images of all the evaluation maps.
\end {rmk}

Now let $\mathcal{L}_{\rho}$ be a locally constant sheaf on $L$.
We then define 
$$m^{\rho,J}_k:C(L,\unovof)^{\otimes k}\to C(L,\unovof)$$ by
\begin{equation}
m^{\rho,J}_k=\sum_{\beta \in \pi_2(M,L)} \rho(\partial \beta)m^{J}_{k,\beta} T^{\omega(\beta)} \in C(L,\unovof).
\end{equation}
With these definitions we get to the main result of \cite{fooo}:
\begin{theorem}
$\big( C(L,\unovof), m^{\rho,J}_k, k\geq0 \big)$ is a filtered $A_{\infty}$-algebra, i.e.
$$\sum_{0\leq k\leq n, i} m^{\rho,J}_{n-k+1}(x_1,\ldots,m^{\rho,J}_k(x_i, \ldots, x_{i+k-1}),\ldots,x_n)=0.$$
\end{theorem}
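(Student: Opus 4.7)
The plan is to follow the standard strategy for establishing $A_\infty$-structures on the Lagrangian chain complex developed in \cite{fooo} and \cite{fooo8}, with attention paid to the local coefficient system $\mathcal L_\rho$ and the use of $\mathbb Z_2$-coefficients. The $A_\infty$-relations are, as always, a consequence of the structure of the codimension-one boundary of the moduli spaces $\mathcal M^J_{k+1}(\beta;L)$ together with the fact that a singular chain that is a boundary maps to zero in homology.

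The first step is to identify the codimension-one boundary of the perturbed moduli space $\mathcal M^J_{k+1}(\beta;L;P_1,\ldots,P_k)^{\mathfrak s^\epsilon}$. By the general gluing/compactification theory for $J$-holomorphic discs with Lagrangian boundary, this codimension-one boundary decomposes as a union of fiber products
\begin{equation*}
\bigcup_{\substack{\beta_1+\beta_2=\beta \\ 0\leq i, \, 0\leq k_1\leq k-i}}
\mathcal M^J_{k_1+1}(\beta_1;L;P_{i+1},\ldots,P_{i+k_1})\,{}_{ev_0}\!\times_{ev_j}\, \mathcal M^J_{k-k_1+2}(\beta_2;L;P_1,\ldots,P_i,\bullet,P_{i+k_1+1},\ldots,P_k),
\end{equation*}
corresponding to disc bubbling along the boundary. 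One then checks, using Theorem \ref{thm:main_kuranishi_space_thm}, that the perturbations $\mathfrak s^\epsilon$ can be chosen to respect this fiber-product structure inductively on the symplectic area (using the forgetful/gluing maps among the Kuranishi structures); the fact that the locus with non-trivial isotropy has actual codimension at least two ensures that the single-valued perturbed boundary is a genuine PL pseudo-cycle in $L$. This is precisely where spherical positivity of $J$ is used.

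The second step is to take the $ev_0$-pushforward of the codimension-one boundary. Applied to the chain $P_1\otimes\cdots\otimes P_n$ and summed over $\beta$, this pushforward is $\partial$ of a chain in $C(L,\unovof)$, hence zero in the chain complex modulo higher-filtration terms; unpacking the boundary decomposition gives exactly
\[
\sum_{0\leq k\leq n,\, i} m^J_{n-k+1,\beta_2}\bigl(\,x_1,\ldots,m^J_{k,\beta_1}(x_i,\ldots,x_{i+k-1}),\ldots,x_n\bigr)=0
\]
for each $\beta=\beta_1+\beta_2$. The contribution of the locally constant sheaf $\mathcal L_\rho$ is multiplicative under concatenation of the boundary loops: if we glue two discs of classes $\beta_1,\beta_2$ along a common boundary point, then the boundary of the glued disc is homologous to $\partial\beta_1+\partial\beta_2$ in $L$, so $\rho(\partial\beta)=\rho(\partial\beta_1)\rho(\partial\beta_2)$. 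Hence the twisting factors assemble correctly, and similarly the areas add: $T^{\omega(\beta)}=T^{\omega(\beta_1)}T^{\omega(\beta_2)}$. Summing over all $\beta\in\pi_2(M,L)$ converts the broken-disc identity into the $A_\infty$-relation for the operations $m^{\rho,J}_k$.

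The hard part is the construction of the single-valued perturbations $\mathfrak s^\epsilon$ in a manner that is coherent across all the moduli spaces $\mathcal M^J_{k+1}(\beta;L)$ simultaneously, so that the boundary-fiber-product identity above holds on the chain level (not just virtually); this is the technical heart of \cite{fooo8} and is what allows the argument to go through over $\mathbb Z_2$ (rather than needing $\mathbb Q$ as in \cite{fooo}). Once this coherence is in place, the $A_\infty$-relations follow formally. Convergence of the infinite sum defining $m^{\rho,J}_k$ in the $T$-adic topology is immediate from Gromov compactness, since for each energy bound only finitely many classes $\beta$ contribute, ensuring $m^{\rho,J}_k$ indeed lands in the completion $C(L,\unovof)$.
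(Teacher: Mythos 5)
Your outline is correct, but note that the paper itself does not prove this theorem at all: it is quoted verbatim as the main result of \cite{fooo} (with the $\Z_2$-coefficient version supplied by \cite{fooo8}), and your sketch --- codimension-one boundary strata of $\mathcal M^J_{k+1}(\beta;L;P_1,\ldots,P_k)$ given by boundary disc bubbling, coherent single-valued perturbations made possible by the codimension-two isotropy locus under spherical positivity, multiplicativity of $\rho(\partial\beta)$ and additivity of $\omega(\beta)$ under splitting, and Gromov compactness for Novikov convergence --- is precisely the strategy of those references. So there is nothing to compare against within the paper; your proposal is a faithful summary of the external argument the paper is citing.
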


When working over $\mathbb Q$ the homotopy type  of this $A_{\infty}$-algebra (see \cite[Chapter 4]{fooo} for the definition of homotopy) is independent of the almost complex structure. More precisely, it is proved in Section 4.6 of \cite{fooo} that a path $J_t$ ($0\leq t\leq 1$) induces a homotopy from $\big( C(L,\unovof), m^{\rho,J_0}_k \big)$ to $\big( C(L,\unovof), m^{\rho,J_1}_k,\big)$. 
The independence then follows from the fact that the space of compatible almost complex structures is connected (in fact contractible).  
When working with integer coefficients we have to restrict ourselves to spherically positive complex structures, but the space of these is not connected. So the homotopy type of $\big( C(L,\unovof), m^{\rho,J}_k \big)$ depends on the connected component of $J$ in the space of spherically positive complex structures.

The next concept to consider is that of a weak bounding cochain.
This will then allow us to define Floer cohomology.
\begin{definition}
Let $(C,m^{\rho,J}_k)$ be the filtered $A_{\infty}$-algebra from above. An element $b \in T^{\lambda}C$ ($\lambda >0$) is called a weak bounding cochain if it satisfies the Maurer-Cartan equation
$$\sum^{\infty}_{k=0} m^{\rho,J}_k(b,\ldots,b)= \mathfrak{P}(L,\rho,b,J)[L].$$
Here $[L]$ is the fundamental class of the Lagrangian submanifold and $\mathfrak{P}(L,\rho,b,J)$ is an element in $\unovof$.

If in addition $\mathfrak{P}(L,\rho,b,J)=0$ then $b$ is called a bounding cochain. 
\end{definition}

\begin{definition}
$(L,\mathcal L_\rho)$ is called (weakly) unobstructed if $(C(L,\unovof), m^{\rho,J}_k)$ has a (weak) bounding cochain. 
If $b$ is a weak bounding cochain let $\delta_{b,J} : C(L,\unovof) \rightarrow C(L,\unovof)$ be
$$\delta_{b,J}(P)=\sum_{i,j} m^{\rho,J}_{i+j+1}(\underbrace{b,\ldots,b}_{i},P,\underbrace{b,\ldots,b}_{j}).$$
The Maurer-Cartan equation implies that $\delta_{b,J}$ is a differential. The Floer cohomology is then defined to be
$$HF((L,\mathcal{L}_{\rho}),b,J;\unovof)=\frac{Ker \ \delta_{b,J}}{Im \ \delta_{b,J}}.$$
\end{definition}

\begin{rmk}
1. When using the graded Novikov ring as in \cite{fooo} weak bounding cochains are homogeneous of degree one, i.e. $b=\sum_{i\geq 0}{b_i e^{\mu_i /2} T^{\lambda_i}}$ where $b_i$ is a chain of codimension $1-\mu_i$.
Recall, however, that we are not using the graded Novikov ring in this paper.

2. In \cite{toricfooo}, instead of deforming the $A_ \infty $-operations $m_k$  by using the local system $\mathcal L_\rho$ the authors include $\rho$ in the weak bounding cochain. 
In this approach, $C(L)$ is replaced by the de Rham complex $\Omega(L)$ with complex coefficients. 
Then $\mathcal L_\rho$ is just a flat line bundle which can be represented by a closed one form $\rho$.  
The bounding cochains considered are then of the form $b'=\rho + b$ where $b \in T^{\lambda}\Omega(L)$.
With this approach, convergence of the Maurer-Cartan equation then becomes an issue.
It turns out that for toric fibers the sum does converge.
\end{rmk}

Now consider the case of a pair of Lagrangians $L,L'$ intersecting transversely equipped with locally constant sheaves $\mathcal{L}_{\rho}$ and $\mathcal{L'}_{\rho'}$ on $L$ and $L'$. Define 
$$C\big( (L,\mathcal{L}),(L',\mathcal{L'});\unovof \big) := \bigoplus_{p \in L \cap L'} \textrm{Hom}(\mathcal{L'}|_p,\mathcal{L}|_p) \otimes \unovof.$$ 
Observe that $\textrm{Hom}(\mathcal{L'}|_p,\mathcal{L}|_p)$ is isomorphic to $\mathbb{F}_2$ but not canonically so.
Now take a path of spherically positive almost complex structures $J_t$ ($0\leq t\leq 1$) (general Floer theory allows a one-parameter family of almost complex structures; for the applications in this paper we will just use a fixed complex structure) and consider the moduli space of \textit{stable} holomorphic strips. Take $p,q \in L \cap L'$,  and define the space

\begin{equation}\label{dbar}
\left\{
\big[ u,(z_1,\ldots,z_i),(z'_1,\ldots,z'_j)\big]
\Bigg|
\begin{array}{l}
z_m \in \R \times \{0\}, \ z'_n \in \R \times \{1\} \\
u:\mathbb R\times[0,1] \rightarrow M \\
\bar\partial_J u=\frac{\partial}{\partial s}u+J_t(u) \frac{\partial}{\partial t} u=0, \\
u(\cdot,0)\in L_0,\,u(\cdot,1)\in L_1, \\
u(-\infty,\cdot)=p, u(+\infty,\cdot)=q 
\end{array}
\right\}
\end{equation}
modulo automorphisms of the strip that identify all the data. Fix a homotopy class $B \in \pi_2(p,q)$ of trajectories and take the stable map compactification of this space. Denote it by $\mathcal{M}^{J_t}_{i,j}(p,q;B)$.
Now given $P_1,\ldots,P_i \in C(L;\unovo)$ and $P'_1,\ldots,P'_j \in C(L';\unovo)$ we take the fiber product
\begin{eqnarray*}
\lefteqn{\mathcal{M}^{J_t}_{i,j}(p,q;B;P_1,\ldots,P_i,P'_1,\ldots,P'_j) =}  \\
&& \mathcal{M}^{J_t}_{i,j}(p,q;B)\times _{(ev_1,\ldots,ev_i, ev'_1,\ldots,ev'_j)}(P_1 \times \ldots \times P_i, P'_1\times \ldots \times P'_j) 
\end{eqnarray*}
where $ev_m$ and $ev'_n$ are the evaluation maps at the marked points.

As in the case of one Lagrangian, it is proved in \cite[Chapter 7]{fooo} that this moduli space has a Kuranishi structure and, under the spherical positivity assumption, it is proved in \cite[Section 35]{fooo8} that it has a fundamental chain with coefficients in $\Z_2$. 
Now assume that $(C(L),m^{\rho,J_0}_k)$ and $(C(L'),m^{\rho',J_1}_k)$ have weak bounding cochains $b$ and $b'$ and define the operators
$$n^{J_t}_{i,j}: C\big( (L,\mathcal{L}),(L',\mathcal{L'});\unovof \big) \rightarrow C\big( (L,\mathcal{L}),(L',\mathcal{L'});\unovof \big)$$
by
\begin{eqnarray}\label{bimod}
n^{J_t}_{i,j}(\alpha_p) &=& \sum_{q,B} T^{\omega(B)} \sharp (\mathcal{M}^{J_t}_{i,j}(p,q;B;\underbrace{b,\ldots,b}_{i},\underbrace{b',\ldots,b'}_{j}))\alpha_q,
\end{eqnarray}
where the sum is over all $(q,B)$ such that 
$\textrm{vir dim } \mathcal{M}^{J_t}_{i,j}(p,q;B,b,\ldots,b,b',\ldots,b')=0$ and the symbol $\sharp$ stands for the number of points in this zero dimensional (compact) space.

Here, $\alpha_q \in \textrm{Hom}(\mathcal{L'}|_q,\mathcal{L}|_q)$ is defined as follows: $B \in \pi_2(p,q)$ determines (homotopy classes of) paths $l_0$ in $L$ and $l_1$ in $L'$ from $q$ to $p$. By taking constant sections of $\mathcal{L'}$ over $l_1$ we obtain a map in $\textrm{Hom}(\mathcal{L'}|_q,\mathcal{L'}|_p)$ that we denote by $par(l_1)$. Let $l_0^{-1}$ be the path $l_0$ with the opposite orientation and define $par(l_0^{-1})$ in a similar way. We finally put
$$\alpha_q=par(l_0^{-1})\circ \alpha_p \circ par(l_1).$$
\begin{rmk}
Note that this definition of $\alpha_q$ reduces to the one we gave in Section 3 where we took $\mathcal{L}'$ to be constant, since in that case $\mathcal{L}'|_p$ is canonically isomorphic to $\mathbb{F}_2$ and $par(l_0^{-1})$ is the identity.
\end{rmk}

Finally define the map 
\begin{equation}
\delta_{b,b',J_t}(\alpha_p)=\sum_{i,j \geq 0} n^{J_t}_{i,j}(\alpha_p).
\end{equation}

\begin{theorem}\cite[Section 3.7]{fooo}\label{obs}
Let $(L,\mathcal{L}_{\rho})$ and $(L',\mathcal{L'}_{\rho'})$ be as above with weak bounding cochains $b$ and $b'$. Then $$\delta_{b,b',J_t}^2 = (\mathfrak{P}(L,\rho,b,J_0)+\mathfrak{P}(L',\rho',b',J_1) )id.$$
\end{theorem}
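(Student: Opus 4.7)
The plan is to derive this identity as a corollary of the filtered $A_\infty$-bimodule structure carried by $C((L,\mathcal{L}),(L',\mathcal{L'});\unovf)$ over the pair of filtered $A_\infty$-algebras $(C(L,\unovof),m^{\rho,J_0}_k)$ and $(C(L',\unovof),m^{\rho',J_1}_k)$. First I would set up this bimodule structure: define unperturbed operators
$$n^{J_t}_{i,j}(P_1,\ldots,P_i;\alpha_p;P'_1,\ldots,P'_j)$$
exactly as in (\ref{bimod}) but with arbitrary chain inputs rather than $b,b'$, using the Kuranishi structure on $\mathcal{M}^{J_t}_{i,j}(p,q;B;P_1,\ldots,P_i,P'_1,\ldots,P'_j)$ and the single-valued perturbation scheme of \cite{fooo8} discussed in the previous remark to obtain well-defined counts in the zero-dimensional case.

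Next I would establish the $A_\infty$-bimodule relations by analyzing the codimension-one boundary of the virtual-dimension-one component of $\mathcal{M}^{J_t}_{i,j}(p,q;B;\ldots)$. This boundary decomposes into three types of degenerations: (i) strip breaking at an intermediate point $r\in L\cap L'$, producing compositions $n^{J_t}_{i_1,j_1}\circ n^{J_t}_{i_2,j_2}$; (ii) disc bubbling off the bottom boundary on $L$, producing terms with an internal $m^{\rho,J_0}_k$ on some consecutive $L$-inputs; and (iii) disc bubbling off the top boundary on $L'$, producing analogous terms with $m^{\rho',J_1}_k$. The holonomies of $\mathcal L_\rho$ and $\mathcal L'_{\rho'}$ factor correctly under these degenerations because $par(\cdot)$ is multiplicative under concatenation of boundary paths, so the mod-$2$ vanishing of the oriented boundary of a compact $1$-manifold gives the filtered $A_\infty$-bimodule equation.

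Now substitute $b$ into every $L$-slot and $b'$ into every $L'$-slot. Writing out $\delta_{b,b',J_t}^2(\alpha_p)$ and grouping according to where the ``inner'' operation occurs, the strip-breaking terms (i) reassemble into $\delta_{b,b',J_t}^2(\alpha_p)$ itself, while the bubbling terms (ii) and (iii) reassemble into expressions of the form
$$n^{J_t}_{i',j}\bigl(\ldots,\textstyle\sum_k m^{\rho,J_0}_k(b,\ldots,b),\ldots;\alpha_p;b,\ldots,b\bigr)$$
and the symmetric version on the $L'$-side. The weak Maurer-Cartan equations replace these interior sums by $\mathfrak{P}(L,\rho,b,J_0)[L]$ and $\mathfrak{P}(L',\rho',b',J_1)[L']$ respectively. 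The key final input is the unit property of $[L]$ and $[L']$ in Floer theory (\cite[Section 3.7]{fooo}): inserting $[L]$ into any $L$-slot of $n^{J_t}_{i,j}$ collapses the $b$-insertions (since $[L]$ is a strict unit for the bimodule action) and gives back $\alpha_p$, and analogously for $[L']$. Collecting everything produces $\delta_{b,b',J_t}^2=(\mathfrak{P}(L,\rho,b,J_0)+\mathfrak{P}(L',\rho',b',J_1))\mathrm{id}$.

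The main obstacles are technical rather than structural. One is verifying that the Kuranishi perturbations $\mathfrak s^\epsilon$ chosen on the strip moduli spaces are compatible at the boundary with those chosen on the disc moduli spaces defining $m^{\rho,J_0}_k$ and $m^{\rho',J_1}_k$, so that the broken strata of the $1$-dimensional boundary genuinely coincide (as perturbed spaces) with fiber products of lower-dimensional perturbed moduli spaces; this is handled by the coherent perturbation systems of \cite{fooo} and their extension in \cite{fooo8}. The second is the unit property of the fundamental cycles in this $\Z_2$-twisted setting, which requires the standard argument that strips with a marked point constrained to lie on $[L]$ admit an $\R$-action making their count vanish except when the disc component is trivial. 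Both are established in the references and do not require genuinely new arguments here.
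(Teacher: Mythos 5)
Your sketch is correct: the paper itself gives no proof of this statement, quoting it directly from \cite[Section 3.7]{fooo}, and your argument (codimension-one boundary of the strip moduli spaces giving the $A_\infty$-bimodule relation, substitution of $b,b'$, the weak Maurer--Cartan equation, and the unit property of $[L],[L']$) is exactly the argument of that reference. The only loose phrasing is that the strict unit property does not ``collapse'' the $b$-insertions for arbitrary $(i,j)$ but rather kills every term containing $[L]$ except $n_{1,0}([L];\alpha_p)=\alpha_p$; the net conclusion is the same.
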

\begin{definition}
If $\mathfrak{P}(L,\rho,b,J_0)=\mathfrak{P}(L',\rho',b',J_1)$ then the Floer cohomogy of the pair $$ \big( (L,\mathcal{L}_{\rho},b),(L',\mathcal{L'}_{\rho'},b') \big)$$ is defined to be
\begin{equation}
HF\big( (L,\mathcal{L}_{\rho},b),(L',\mathcal{L'}_{\rho'},b'), J_t ;\unovof\big) = \frac{\textrm{Ker} \ \delta_{b,b',J_t}}{\textrm{Im} \ \delta_{b,b',J_t}}.
\end{equation}
Observe that generally this cohomology depends not only on the Lagrangians and local systems but also on the weak bounding cochains.
\end{definition}

Floer cohomology with coefficients in the ring $\unovof$ is not invariant under Hamiltonian isotopy.
However, if the ring is changed to $\unovf$, it is invariant under Hamiltonian isotopy.
$\unovf$ is the field of fractions of $\unovof$, so it is a flat $\unovf$-module.
Thus, the Floer cohomology with $\unovf$ coefficients is simply
\begin{displaymath}
HF\big( (L,\mathcal{L}_{\rho},b),(L',\mathcal{L'}_{\rho'},b'), J_t ;\unovf\big) = HF\big( (L,\mathcal{L}_{\rho},b),(L',\mathcal{L'}_{\rho'},b'), J_t ;\unovof\big)\otimes_{\unovof}\unovf.
\end{displaymath}
The invariance under Hamiltonian isotopy is proved in \cite[Section 5.3]{fooo} for rational coefficients and in \cite[Theorem 34.3]{fooo8} for integral coefficients.
Here is the precise statement:
\begin{theorem}
Let $\psi_t$ be a Hamiltonian isotopy in $(M,\omega)$ and $J_t$ a path of spherically positive complex structures. Then $J^{\psi}_t=(\psi_{1-t})_{*}J_t$ is spherically positive.

Then $\psi_1$ induces a map of $A_{\infty}$-algebras $\psi_{*} :(C(L;\unovf), m^{\rho,J_0}_k) \to (C(\psi(L);\unovf), m^{\psi_{\ast}\rho,J^{\psi}_0}_k )$. Moreover, $\psi_{\ast}(b)$ is a weak bounding cochain and there is an isomorphism
$$ HF \big( (\psi_1(L),\psi_*(\mathcal{L}_{\rho}),\psi_{\ast}(b)),(L',\mathcal{L'}_{\rho'},b'), J^{\psi}_t ;\unovf\big) \cong HF\big( (L,\mathcal{L}_{\rho},b),(L',\mathcal{L'}_{\rho'},b'), J_t ;\unovf\big) .$$
\end{theorem}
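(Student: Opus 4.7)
The plan is to establish the three assertions in sequence: spherical positivity of the new path, the $A_\infty$-isomorphism induced by $\psi_1$, and finally the isomorphism of Floer cohomologies. The first two steps are essentially naturality statements for geometric structures under symplectic diffeomorphism; the third is where the actual Floer-theoretic content lies.

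For spherical positivity of $J^\psi_t$: since $\psi_{1-t}$ is a symplectic diffeomorphism, $J^\psi_t = (\psi_{1-t})_* J_t$ is $\omega$-compatible, and the assignment $v \mapsto \psi_{1-t} \circ v$ is a bijection between non-constant $J_t$-holomorphic spheres and non-constant $J^\psi_t$-holomorphic spheres. Because $c_1(TM, \omega)$ is a topological class and $\psi_{1-t}$ is a diffeomorphism, the Chern number is preserved under this bijection, so positivity transfers from $J_t$ to $J^\psi_t$ pointwise in $t$.

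For the $A_\infty$-isomorphism: the diffeomorphism $\psi_1$ induces bijections
\begin{equation*}
\mathcal M^{J_0}_{k+1}(L;\beta) \;\cong\; \mathcal M^{J^\psi_0}_{k+1}\bigl(\psi_1(L);(\psi_1)_*\beta\bigr), \quad [u,\vec z] \mapsto [\psi_1\circ u,\vec z],
\end{equation*}
which commute with the evaluation maps (via $\psi_1|_L$), preserve symplectic area and Maslov index, and by naturality of Kuranishi theory transport the Kuranishi structures, obstruction bundles, and single-valued perturbations $\mathfrak s^\epsilon$ from Theorem \ref{thm:main_kuranishi_space_thm}. The pushforward of the holonomy homomorphism $\rho$ along the loop $\partial\beta$ equals $\psi_*\rho$ applied to $\partial(\psi_1)_*\beta$. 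Consequently $\psi_{1*}$ intertwines the operations $m^{\rho,J_0}_k$ with $m^{\psi_*\rho,J^\psi_0}_k$, so it is an $A_\infty$-isomorphism. As any $A_\infty$-isomorphism, it sends (weak) Maurer--Cartan solutions to (weak) Maurer--Cartan solutions, and by construction $\mathfrak P(\psi_1(L),\psi_*\rho,\psi_*b,J^\psi_0) = \mathfrak P(L,\rho,b,J_0)$, so $\psi_*b$ is a weak bounding cochain with the same obstruction.

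For the cohomology isomorphism, construct a chain-level map between the two strip complexes by the \emph{Floer trick}: given a $J^\psi_t$-holomorphic strip $u$ with $u(\cdot,0)\in\psi_1(L)$ and $u(\cdot,1)\in L'$, set $v(s,t) := \psi^{-1}_{1-t}(u(s,t))$. A direct calculation shows that $v$ satisfies the $J_t$-Floer equation perturbed by the Hamiltonian vector field of the generator of $\psi_{1-t}$, and the perturbation has been chosen so that, modulo this Hamiltonian term, $v$ is a strip with bottom on $L$ and top on $L'$ in the sense of (\ref{dbar}). This construction extends to the stable map compactifications, intersection points are identified via $\psi_1^{-1}$, symplectic area and Maslov index are preserved, and the holonomies pair correctly because $\psi_{1*}$ takes the loops $l_0,l_1$ bounding $B$ to the loops bounding $(\psi_1)_*B$. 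Thus the moduli spaces $\mathcal M^{J^\psi_t}_{i,j}(p,q;B;\psi_*b,\ldots,b')$ and $\mathcal M^{J_t}_{i,j}(\psi_1^{-1}(p),\psi_1^{-1}(q);(\psi_1)_*^{-1}B;b,\ldots,b')$ are identified, and the operators $n^{J^\psi_t}_{i,j}$ match up with $n^{J_t}_{i,j}$, producing the claimed isomorphism on cohomology after inverting $T$ to land in $\unovf$.

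The main obstacle is the third step: verifying that the naive geometric bijection of strips genuinely lifts to an isomorphism of chain complexes at the Kuranishi level. One must check that the single-valued perturbations $\mathfrak s^\epsilon$ can be chosen compatibly on both sides, that the codimension-two fixed parts correspond, and that the Hamiltonian-perturbation term introduced by the trick does not spoil the count over $\mathbb Z_2$. In practice this is handled by the continuation machinery of \cite[Sections 5.3]{fooo} together with the integral refinements of \cite[Theorem 34.3]{fooo8}, which is why we cite that theorem rather than reprove it from scratch.
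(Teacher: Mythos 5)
The paper does not actually prove this theorem; it is quoted from \cite[Section 5.3]{fooo} (rational coefficients) and \cite[Theorem 34.3]{fooo8} (integral coefficients), so you are supplying more exposition than the source. Your first two steps (spherical positivity and the $A_\infty$-isomorphism from naturality of pushforward under the symplectomorphism $\psi_1$) are correct and standard.

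The third step has two genuine inaccuracies. First, you assert that "symplectic area $\ldots$ [is] preserved" by the substitution $v(s,t)=\psi^{-1}_{1-t}(u(s,t))$. It is not: $\partial_t v$ picks up the Hamiltonian vector field term, so $\int v^*\omega$ and $\int u^*\omega$ differ by a boundary contribution that depends on the endpoints. This is precisely why the paper states, immediately before the theorem, that Floer cohomology with coefficients in $\unovof$ is \emph{not} invariant under Hamiltonian isotopy, and only becomes so after passing to the field of fractions $\unovf$; if energies were literally preserved one would get invariance over $\unovof$ already. Second, and more structurally, the Floer trick does not identify the moduli spaces $\mathcal M^{J^{\psi}_t}_{i,j}(p,q;B;\ldots)$ from (\ref{bimod}) with the unperturbed moduli spaces $\mathcal M^{J_t}_{i,j}(\psi_1^{-1}p,\psi_1^{-1}q;\ldots)$ as you claim. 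It identifies them with the moduli spaces for the \emph{Hamiltonian-perturbed} $J_t$-Floer equation for the pair $(L,L')$, which are not the ones appearing in the target complex. Bridging that gap requires the continuation-map machinery (interpolating the perturbation to zero) and is the real content of \cite[Section 5.3]{fooo} and its integral upgrade in \cite[Theorem 34.3]{fooo8}. You do end by citing that machinery, but you locate the difficulty only in "lifting the bijection to the Kuranishi level," whereas the deeper issue is that there is no chain-level bijection between the two unperturbed complexes in the first place.
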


Since the complex $C\big( (L,\mathcal{L}),(L',\mathcal{L'}), J_t;\unovf \big)$ is generated by intersections points, this theorem implies

\begin{corollary}
Let $\psi_t$ be a Hamiltonian isotopy such that $\psi_1(L)$ and $L'$ intersect transversely. Then
\begin{eqnarray*}
 \sharp \big( \psi_1(L) \cap L' \big) & \geq & \textrm{rank}  \ HF \big( (\psi_1(L),\psi_*(\mathcal{L}_{\rho}),\psi_{\ast}(b)),(L',\mathcal{L'}_{\rho'},b'), J^{\psi}_t ;\unovf\big) \\
 & = & \textrm{rank}  \ HF\big( (L,\mathcal{L}_{\rho},b),(L',\mathcal{L'}_{\rho'},b'), J_t ;\unovf\big) .
\end{eqnarray*}
\end{corollary}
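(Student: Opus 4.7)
The plan is to deduce this corollary directly from the preceding invariance theorem together with a trivial bound relating the rank of cohomology to the rank of the underlying cochain complex. There is essentially nothing new to prove; the work is just in unpacking the definitions.

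First, I would dispose of the equality on the right-hand side. By the Hamiltonian invariance theorem stated just above, the map $\psi_{\ast}$ induces an isomorphism
\[
HF \big( (\psi_1(L),\psi_*(\mathcal{L}_{\rho}),\psi_{\ast}(b)),(L',\mathcal{L'}_{\rho'},b'), J^{\psi}_t ;\unovf\big) \cong HF\big( (L,\mathcal{L}_{\rho},b),(L',\mathcal{L'}_{\rho'},b'), J_t ;\unovf\big),
\]
so the two $\unovf$-vector spaces have the same rank. This is immediate once the theorem is invoked; no further argument is needed.

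Next, for the inequality, I would observe that since $\psi_1(L)$ meets $L'$ transversely, the cochain complex
\[
C\big( (\psi_1(L),\psi_*(\mathcal{L}_{\rho})),(L',\mathcal{L'}_{\rho'});\unovf \big) = \bigoplus_{p \in \psi_1(L)\cap L'} \textrm{Hom}(\mathcal{L'}_{\rho'}|_p,\psi_*(\mathcal{L}_{\rho})|_p) \otimes \unovf
\]
is a free $\unovf$-module of rank exactly $\sharp(\psi_1(L)\cap L')$, because each summand $\textrm{Hom}(\mathcal{L'}_{\rho'}|_p,\psi_*(\mathcal{L}_{\rho})|_p)$ is one-dimensional over $\F_2$. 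The cohomology is a subquotient of this complex, hence
\[
\textrm{rank}\, HF \big( (\psi_1(L),\psi_*(\mathcal{L}_{\rho}),\psi_{\ast}(b)),(L',\mathcal{L'}_{\rho'},b'), J^{\psi}_t ;\unovf\big) \leq \sharp \big( \psi_1(L) \cap L' \big),
\]
which combined with the equality above yields the claim.

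There is no real obstacle; the only subtlety worth remarking on is the need to have already established the Hamiltonian invariance of Floer cohomology with $\unovf$ coefficients, which is what forced the passage from $\unovof$ to its field of fractions $\unovf$ in the preceding discussion. Once invariance is in hand, the corollary is essentially a tautology.
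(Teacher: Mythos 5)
Your proposal is correct and matches the paper's (implicit) argument: the paper simply notes that the complex is generated by intersection points, so the rank of the cohomology is bounded by $\sharp(\psi_1(L)\cap L')$, and the equality follows from the invariance theorem just stated. Nothing further is needed.
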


We finish this section with a proposition that will be used in the next section.
\begin{proposition}\label{diff}
Let $ \big( (L,\mathcal{L}_{\rho}),(L',\mathcal{L'}_{\rho'}) \big)$ be a pair of weakly unobstructed Lagrangians as above. Let $b=\sum_{i\geq 0}{b_i T^{\lambda_i}}$ and $b'=\sum_{i\geq 0}{b'_i T^{\lambda'_i}}$ be weak bounding cochains of $L$ and $L'$, such that $\dim{b_i}, \ \dim{b'_i} \geq n+1, \ \forall i$ (here $n=\dim L$).
If $ \mathcal{M}^{J_t}_{i,j}(p,q;B)= \emptyset $ for all $B$ with Maslov index $\mu(B)\leq 0$ then
$$\delta_{b,b',J_t}(\alpha_p)=n^{J_t}_{0,0}(\alpha_p).$$
\end{proposition}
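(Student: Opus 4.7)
My plan is a direct virtual-dimension count showing that every term with $(i,j)\neq(0,0)$ in the expansion
$$\delta_{b,b',J_t}(\alpha_p)=\sum_{i,j\geq 0}n^{J_t}_{i,j}(\alpha_p)$$
vanishes. Writing $b=\sum_k b_k T^{\lambda_k}$ and $b'=\sum_l b'_l T^{\lambda'_l}$, multilinearity of the $n^{J_t}_{i,j}$ operations expands each term into a sum over choices $(b_{k_1},\ldots,b_{k_i})$ and $(b'_{l_1},\ldots,b'_{l_j})$, and by the definition of $n^{J_t}_{i,j}$ only those moduli spaces $\mathcal{M}^{J_t}_{i,j}(p,q;B;b_{k_1},\ldots,b_{k_i},b'_{l_1},\ldots,b'_{l_j})$ with virtual dimension zero are counted. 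The strategy is to show that this zero-dimensionality combined with $\dim b_{k_r},\dim b'_{l_s}\geq n+1$ forces $\mu(B)\leq 0$ as soon as $i+j\geq 1$; the standing hypothesis then makes the underlying strip moduli space empty, so the contribution is zero.

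The main step is the virtual-dimension formula. The strip moduli space $\mathcal{M}^{J_t}_{i,j}(p,q;B)$ has virtual dimension $\mu(B)-1+(i+j)$ (Fredholm index $\mu(B)$, minus one for the $\mathbb{R}$-translation, plus one per boundary marked point), and taking the fiber product against the evaluation maps with the chain insertions adds $\sum_{r}(\dim b_{k_r}-n)+\sum_{s}(\dim b'_{l_s}-n)$. So
$$\textrm{vir dim}=\mu(B)-1+(i+j)+\sum_{r=1}^{i}(\dim b_{k_r}-n)+\sum_{s=1}^{j}(\dim b'_{l_s}-n).$$
Each summand $\dim b_{k_r}-n$ and $\dim b'_{l_s}-n$ is at least $1$ by hypothesis, so setting $\textrm{vir dim}=0$ with $i+j\geq 1$ yields
$$\mu(B)\leq 1-(i+j)-(i+j)=1-2(i+j)\leq -1.$$
In particular $\mu(B)\leq 0$, so by the standing hypothesis $\mathcal{M}^{J_t}_{i,j}(p,q;B)=\emptyset$, the fibered space is empty, and the term vanishes. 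The only surviving summand is $(i,j)=(0,0)$, giving $\delta_{b,b',J_t}(\alpha_p)=n^{J_t}_{0,0}(\alpha_p)$.

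I do not expect a substantive obstacle. The mild subtlety is to confirm that the Maslov-index, $\mathbb{R}$-quotient, and fiber-product conventions from \cite{fooo, fooo8} match the dimension formula written above, and that the emptiness of the unperturbed strip moduli space propagates, in virtual dimension zero, to the Kuranishi-perturbed fibered space—standard once the small-perturbation limit from Theorem \ref{thm:main_kuranishi_space_thm} is used.
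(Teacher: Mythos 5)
Your proposal is correct and follows essentially the same route as the paper: the identical virtual-dimension formula $\mu(B)-1+(i+j)+\sum(\dim b_{k_r}-n)+\sum(\dim b'_{l_s}-n)$, the same lower bound $\mu(B)+2(i+j)-1$ from the hypothesis $\dim b_i,\dim b'_i\geq n+1$, and the same conclusion that zero virtual dimension with $i+j\geq 1$ forces $\mu(B)\leq -1$, whence the moduli space is empty. The paper cites \cite[Proposition 3.7.36]{fooo} for the dimension formula, which matches the conventions you spell out.
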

\begin{proof}
This follows from an easy dimension counting argument.
Namely, suppose $$\mathcal{M}^{J_t}_{i,j}(p,q;B,b_{m_1},\ldots,b_{m_i},b'_{n_1},\ldots,b'_{n_j})$$ contributes to $\delta_{b,b',J_t}$.
Then
\begin{eqnarray}
\lefteqn{0=\textrm{vir dim }\mathcal{M}^{J_t}_{i,j}(p,q;B,b_{m_1},\ldots,b_{m_i},b'_{n_1},\ldots,b'_{n_j})} \nonumber \\
&& = \mu(B)+i+j+ \sum{\dim{b}}+ \sum{\dim{b'}}-(i+j)n-1 \nonumber \\
&& \geq \mu(B)+i+j+ i(n+1)+ j(n+1)-(i+j)n-1  \\
&& \geq \mu(B)+2(i+j)-1.\nonumber
\end{eqnarray}
The  first equality is the standard dimension formula, proved in \cite[Proposition 3.7.36]{fooo}. The first inequality follows from the assumptions on $b$ and $b'$. 
Therefore
\begin{equation}
\mu(B) \leq  1-2(i+j).
\end{equation}
The assumption on the Maslov index of holomorphic strips then implies $i+j=0$.
This proves that $\delta_{b,b',J_t}(\alpha_p)=n^{J_t}_{0,0}(\alpha_p)$.
\end{proof}

\section{Equality of Floer chain complexes}
In this section we will show that the Lagrangians $R_P$ and $(L_c,\mathcal L_\rho)$ admit weak bounding cochains $b_P$ and $b_c$. 
Finally, we will then show that for these bounding cochains the Floer differential $\delta_{b,b'}$ defined in the previous section reduces to the one we described in (\ref{main}).
Since we are assuming $X_P$ is Fano the toric complex structure is spherically positive and we can use $\Z_2$-coefficients. 
Throughout this section the toric complex structure is the only complex structure we will use so we will drop it from the notation.

Let $\tau : X_P\rightarrow X_P$ be the complex conjugation map.
Let $u:(D^2,\partial D^2) \rightarrow (M,R)$ be a holomorphic disc with boundary in $R_P$ and take $\bar{u}=\tau \circ u$ defined on the disc $\bar{D^2}$ with the opposite complex structure. By gluing $D^2$ and $\bar{D^2}$ along the boundary we get a holomorphic sphere $v : S^2 \rightarrow X_P$ that we call the double of $u$. The relation $c_1(v)=\mu(u)$ then implies $\mu(u)>0$, since $J$ is spherically positive. Thus any stable map with boundary in $R_P$ has positive Maslov index. We say $(R_P, J)$ is positive.

Our goal is to prove that $R_P$ is weakly unobstructed. 
To do this we need to better understand the space of holomorphic discs with boundary on $R_P$.
The involution $\tau$ induces a map  $\tau_{\ast} : \mathcal{M}_1(R,\beta) \rightarrow \mathcal{M}_1(R,\tau_{\ast}\beta)$. It is defined as
$$ \tau_{\ast}(u)(z)= \tau (u(\bar{z}))$$
for maps $u$ defined on the disc; it can be extended to the compactification $\mathcal{M}_1(R,\beta)$ in the obvious way. Note that $\omega(\beta)=\omega(\tau_{\ast}\beta)$ and $\mu(\beta)=\mu(\tau_{\ast}\beta)$. Throughout the rest of the section we will identify elements in $\pi_2(X,R)$ that have the same energy and Maslov index. So we write $\beta=\tau_{\ast}\beta$. 

If $\tau_*$ did not have fixed points, it would imply that stable maps come in pairs and it would follow (since we are working mod 2) that $m_0=0$ in $C(R,\unov)$. This argument fails since $\tau_{\ast}$ does indeed have fixed points. Nevertheless we have a partial classification of these fixed points.
\begin{lemma}\cite[Lemma 40.10]{fooo8} \label{fixed}
Let $w$ be an element in the interior of $\mathcal{M}_1(R,\beta)$ such that $\tau_{\ast}(w)=w$. Then there exists $\beta'$ and $w' \in \mathcal{M}_1(R,\beta')$ such that 
$$\beta'+\tau_{\ast}\beta'= 2\beta' = \beta \ \textrm{and} \ w = D(w').$$

Moreover there is $w''$ and $l$ such that $w=D^l(w'')$ and $\tau_{\ast}(w'') \neq w''.$ 
(See \cite[Section 40]{fooo8} for the definition of $D$.)
\end{lemma}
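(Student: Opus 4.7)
The plan is to extract the doubling structure from the fixed-point condition by analyzing representatives before passing to equivalence classes. If $w$ is represented by a stable map $(u, z_0)$, then $\tau_*(w) = w$ says that there exists an automorphism $\phi$ of the domain curve such that $\tau \circ u \circ c = u \circ \phi$ and $c(z_0) = \phi(z_0)$, where $c$ denotes complex conjugation $z \mapsto \bar{z}$ extended across nodes. Setting $\psi = c \circ \phi^{-1}$ yields an anti-holomorphic involution of the domain under which $u$ is $\tau$-equivariant in the sense that $u \circ \psi = \tau \circ u$.

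I would first handle the generic case of an irreducible domain. Anti-holomorphic involutions of $D^2$ preserving a boundary marked point are all conjugate (via a biholomorphism of $D^2$) to the standard reflection $z \mapsto \bar{z}$, whose fixed set is a diameter. The quotient $D^2/\psi$ is a half-disc, and $\tau$-equivariance implies that $u$ descends to a map $w'$ on this half-disc whose boundary (consisting of a semicircle and the fixed diameter) lands in $R$. Uniformizing the half-disc back to a disc produces $w' \in \mathcal{M}_1(R, \beta')$, and $w$ is recovered from $w'$ by the Schwarz-reflection doubling, which is precisely $D$. The class identity $2\beta' = \beta$ follows from the fact that the doubled map covers the image of $w'$ twice (once via $w'$, once via $\tau \circ w' \circ c$) and gives the desired Maslov and symplectic area doubling.

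For stable maps with reducible domains, $\psi$ acts on the dual graph; each two-cycle of components is absorbed into a single component of $w'$ (by choosing one representative and recording the identification), while each $\psi$-fixed component is doubled as in the irreducible case. Reassembling these pieces compatibly along the nodes yields $w'$, and the identity $w = D(w')$ follows by construction. The "moreover" statement is then an induction: if the resulting $w'$ is itself fixed by $\tau_*$, apply the construction again and continue. The process terminates because each application of $D^{-1}$ halves $\omega$, while spherical positivity and the existence of a minimal positive disc energy bound the number of halvings; taking $l$ maximal produces $w''$ with $\tau_*(w'') \neq w''$.

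The main obstacle will be the combinatorial bookkeeping for reducible domains: one must verify that the output of the halving procedure is a genuine stable map (with the stability condition and node-matching correctly tracked), that the positions of the marked points descend consistently, and that boundary disc bubbles attached along the $\psi$-fixed diameter either come in $\tau$-symmetric pairs (which are absorbed into $w'$ as interior-node pairings) or are individually $\tau$-symmetric (which is exactly the case that demands further iteration). A secondary subtlety is that the construction should be compatible with the Kuranishi structures of Section~6 so that the decomposition can be used chain-theoretically, but for the set-theoretic statement above it suffices to perform the analysis on the underlying moduli spaces.
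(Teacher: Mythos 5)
This lemma is not proved in the paper at all: it is quoted directly from \cite{fooo8} (Lemma 40.10), and your doubling argument is essentially the proof given in that reference, so the approach matches the intended one. Two small remarks: first, since $w$ lies in the \emph{interior} of $\mathcal{M}_1(R,\beta)$ its domain is an irreducible disc, so the nodal/dual-graph bookkeeping in your third paragraph is vacuous for this statement (and the iterated maps $w', w'', \ldots$ are again honest discs, so the induction never leaves the interior stratum); second, the assertion that $\psi=c\circ\phi^{-1}$ is an involution is not automatic from the definition --- one gets $u\circ\psi^2=u$ with $\psi^2$ holomorphic and fixing $z_0$, and one must invoke the standard fact that a holomorphic automorphism of $D^2$ fixing a boundary point and satisfying $u\circ g=u$ for nonconstant $u$ has finite order and hence is the identity, which is what forces $\psi^2=\mathrm{id}$. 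With those points understood, the rest --- conjugating $\psi$ to $z\mapsto\bar z$, noting that the fixed diameter maps into $R=\mathrm{Fix}(\tau)$ so that the restriction to the half-disc uniformizes to $w'\in\mathcal{M}_1(R,\beta')$ with $\beta=\beta'+\tau_{\ast}\beta'$, and terminating the iteration via the positive lower bound on the energy (equivalently, the positivity $\mu\geq 1$ for discs on $R$, which gives $\mu(\beta)\geq 2^l$) --- is correct and is exactly the argument of the cited lemma.
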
 

It follows from the first part of the lemma that if $\mu(\beta)=1$ then $\tau_{\ast}$ does not have fixed points on the interior of $\mathcal{M}_1(R,\beta)$. $(R,J)$ is positive, so this moduli space has no boundary (no bubbling can occur). 
Thus, by Proposition 41.13 in \cite{fooo8}, we can take a single-valued perturbation of $\mathcal{M}_1(R,\beta)$ invariant under $\tau_{\ast}$. 
The argument of the paragraph before the lemma then goes through to prove
\begin{equation}\label{maslov1}
m_{0,\mu(\beta)=1} = 0 \ \in \ C(R,\unovo).
\end{equation}

Using Lemma \ref{fixed} and a careful analysis of the boundary of the moduli spaces $\mathcal{M}_{k+1}(R,\beta)$ the authors of \cite{fooo8} obtain 
\begin{theorem}\label{unobstructed}\cite[Theorem 43.28]{fooo8} 
$R$ is unobstructed over $\unovo$. In fact there exists a bounding cochain $b_R=\sum_{i}{b_iT^{\lambda_i}}$ such that $\dim{b_i}>n$ and
$$\mathfrak{P}(R,b_R)=0$$
$$HF(R,b;\unovo) \cong H(R, \Z_2) \otimes \unovo.$$
\end{theorem}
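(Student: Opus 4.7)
The plan is to construct the weak bounding cochain $b_R$ inductively by filtration on energy, using the anti-symplectic involution $\tau$ to force cancellations in the Maurer-Cartan obstructions. We enumerate the homotopy classes $\beta\in\pi_2(X_P,R)$ in order of increasing $\omega(\beta)$; since $X_P$ is Fano and $J$ is spherically positive, for each energy level only finitely many $\beta$ arise and $\mu(\beta)>0$ for all nonconstant classes. At each stage we attempt to build $b_R$ modulo $T^{E}$ so that
$$\sum_{k\geq 0}m_k(b_R,\ldots,b_R)\equiv c_R\cdot[R]\pmod{T^{E}}$$
for some $c_R\in\unovof$. The initial obstruction is $m_0=\sum_{\beta}m_{0,\beta}T^{\omega(\beta)}$; to preserve the degree condition $\dim b_i>n$ we will take all correction terms $b_i$ to be chains of dimension strictly greater than $n$, so they participate only via $m_1$ and higher (as $m_0$ lives in top-dimensional chains of $R$).

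The key mechanism for killing the obstruction is the $\tau_\ast$-symmetry. After choosing $\tau_\ast$-equivariant single-valued perturbations of the Kuranishi structure on $\mathcal M_{k+1}(R,\beta)$ (possible by the discussion around Proposition 41.13 of \cite{fooo8}), contributions from stable maps with $\tau_\ast(w)\neq w$ cancel in pairs mod $2$. Only $\tau_\ast$-fixed stable maps contribute. By Lemma \ref{fixed}, any such fixed $w$ is iterated doubling of some $w''$, so $\beta$ is twice a class and in particular $\mu(\beta)$ is even. Combined with (\ref{maslov1}), this already kills all odd-Maslov-index contributions. For the remaining even-Maslov-index classes, the idea is that the fixed loci are of the form $D^l(\mathcal M_{k'+1}(R,\beta'))$ with $\beta=2^l\beta'$, so by induction on energy their boundary contribution is either null-homologous or already a multiple of $[R]$. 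This allows us to choose a chain $b_i$ with $\dim b_i>n$ so that $m_1(b_iT^{\lambda_i})$ cancels the non-fundamental-class part of the obstruction at this energy level. The number $\mathfrak{P}(R,b_R)$ absorbs the residual multiple of $[R]$; a further argument using Schwarz reflection of Maslov-$2$ discs (each pairs with its $\tau_\ast$-image) shows in fact $\mathfrak P(R,b_R)=0$, so $R$ is actually unobstructed, not just weakly so.

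Once $b_R$ is constructed, the Floer differential $\delta_{b_R}$ is a deformation of the ordinary singular boundary: write $\delta_{b_R}=\partial+\delta'$ where $\delta'$ raises energy. Using the energy filtration, the associated spectral sequence has $E_1$-page $H_\ast(R,\Z_2)\otimes\unovo$. To conclude $HF(R,b_R;\unovo)\cong H_\ast(R,\Z_2)\otimes\unovo$ we need the spectral sequence to degenerate at $E_1$. All higher differentials are counts of moduli spaces $\mathcal M_{k+1}(R,\beta)$ with nontrivial $\beta$, and the involution-pairing argument above (together with the classification of $\tau_\ast$-fixed strata from Lemma \ref{fixed}) forces these to vanish modulo $2$. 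Therefore all higher differentials on the spectral sequence vanish and the isomorphism follows.

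The main obstacle is the technical interplay between $\tau_\ast$-equivariance and transversality of the Kuranishi perturbations: the inductive construction of $b_R$ requires perturbations that are simultaneously $\tau_\ast$-invariant and single-valued with codimension-$\geq 2$ fixed-point locus, and these perturbations on $\mathcal M_{k+1}(R,\beta)$ for varying $(k,\beta)$ must be compatible with the forgetful and boundary maps relating these moduli spaces. This compatibility is exactly what the elaborate setup of \cite[Sections 35--43]{fooo8} is designed to provide, and it is the reason we invoke Theorem 43.28 of \cite{fooo8} rather than attempting a self-contained argument.
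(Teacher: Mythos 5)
Your overall strategy --- induction on energy, $\tau_*$-equivariant single-valued perturbations so that non-fixed configurations cancel mod $2$, Lemma \ref{fixed} to control the fixed loci, and deferral to \cite[Theorem 43.28]{fooo8} for the compatibility of all these perturbations --- is the same route the paper takes. In fact the paper cites that theorem for everything substantive (unobstructedness, $\mathfrak P(R,b_R)=0$, and $HF(R,b_R;\unovo)\cong H(R,\Z_2)\otimes\unovo$) and only supplies an argument for the one refinement it needs, namely $\dim b_i>n$. For that refinement the paper's argument is sharper than yours: in the inductive construction the correction term attached to a class $\beta$ is forced to lie in $C_{n+\mu(\beta)-1}(R,\Z_2)$, because the obstruction it must bound lives in dimension $n+\mu(\beta)-2$ and $m_{1,0}=\partial$ drops dimension by one. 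Positivity of $(R,J)$ gives $\mu(\beta)\geq 1$, so the only way to get $\dim b(\beta)\leq n$ is $\mu(\beta)=1$; there the recursion degenerates to $m_{1,0}(b(\beta))=m_{0,\beta}$, which is $0$ by (\ref{maslov1}), so one may take $b(\beta)=0$. You assert that the condition $\dim b_i>n$ can be arranged but never tie it to this dimension count, so as written it is a declaration rather than a deduction.

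One assertion in your sketch is false as stated: Maslov index $2$ discs on $R$ do \emph{not} all pair off with their $\tau_*$-images. By Lemma \ref{fixed} the fixed points of $\tau_*$ on $\mathcal M_1(R,\beta)$ are exactly iterated doublings coming from classes $\beta'$ with $2\beta'=\beta$, and for $\mu(\beta)=2$ such classes $\beta'$ (of Maslov index $1$) can exist; handling these fixed loci is precisely the content of the $D$-operator machinery in \cite[Section 43]{fooo8}, and is why $\mathfrak P(R,b_R)=0$ does not follow from a bare parity argument. The same caveat applies to your claim that the involution forces all higher differentials of the energy spectral sequence to vanish. Since you ultimately invoke \cite[Theorem 43.28]{fooo8} for these points anyway, these are misstatements rather than fatal gaps, but they should not be presented as complete arguments.
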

\begin{proof}
Here we just show how to prove the statement about the dimensions of the $b_i$'s. 
This theorem is proved by induction on energy. 
Namely, they find $b(\beta_i) \in C_{n+\mu(\beta_i)-1}(R, \Z_2)$ for each $\beta_i \in \pi_2(M,R)/\sim $ with holomorphic representatives, satisfying
\begin{equation}\label{sum} 
m_{1,0}(b(\beta_i))= m_{0,\beta_i}+\sum{m_{k,\beta_{i(0)}}(b(\beta_i(1)),\ldots, b(\beta_{i(k)}))}.
\end{equation}
The sum is over all $\beta$'s satisfying $\beta_{i(0)} + \ldots + \beta_{i(k)} = \beta$.

We want to show that there are no $ b(\beta_i) \in C_{\leq n}(R, \Z_2)$. 
By definition and positivity of $(R,J)$, this could only happen when $\mu(\beta_i)=1$. In that case, (\ref{sum}) reduces to
$$m_{1,0}(b(\beta_i))= m_{0,\beta_i}.$$
But we saw before that  $m_{0,\beta_i}=0$. So we can take $b(\beta_i)=0$.
\end{proof}

\begin{rmk}
The discussion above applies to any spherically positive symplectic manifold with an anti-holomorphic involution.
\end{rmk}

We now turn our attention to the Lagrangian $(L_c, \mathcal L\rho)$.
\begin{theorem}\label{torus}
Let $X_P$ be a Fano toric manifold and $(L_c, \mathcal L\rho)$ a torus fiber with a locally constant sheaf as in (\ref{sheaf}).
Then $(L_c,\mathcal L_\rho)$ is weakly unobstructed and there exists a bounding cochain $b_c=\sum_{i\geq 0}{b_i T^{\lambda_i}}$ with $\dim{b_i}>n$ for all $i$ such that
\begin{equation}
\mathfrak{P}(L_c, \rho, b_c) = \sum^{m}_{j=1} \rho^{v_j}T^{E_j}.
\end{equation}
\end{theorem}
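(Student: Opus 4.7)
The plan is to build $b_c$ by induction on energy, following the same scheme used for $R_P$ in Theorem \ref{unobstructed}. Enumerate the classes in $\pi_2(X_P,L_c)$ carrying holomorphic discs as $\beta_1,\beta_2,\ldots$ in order of non-decreasing area, and seek chains $b(\beta)\in C_{n+\mu(\beta)-1}(L_c,\Z_2)$ so that
$$b_c=\sum_{\beta} b(\beta)\, T^{\omega(\beta)}$$
satisfies the weak Maurer--Cartan equation with $\mathfrak{P}(L_c,\rho,b_c)=\sum_j\rho^{v_j}T^{E_j}$. Splitting the equation by homotopy class, one has to solve, for each $\beta\neq 0$,
$$m_{1,0}(b(\beta))=\rho(\partial\beta)\,m_{0,\beta}(1)+\!\!\sum_{\substack{\beta_{(0)}+\cdots+\beta_{(k)}=\beta\\ k\geq 1}}\!\!\rho(\partial\beta_{(0)})\,m_{k,\beta_{(0)}}(b(\beta_{(1)}),\ldots,b(\beta_{(k)}))-c(\beta)[L_c],$$
where $c(\beta)\in\F_2$ is the coefficient of $[L_c]$ that will feed into $\mathfrak{P}$. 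Writing $E(\beta)$ for the RHS up to the $[L_c]$--term, a standard dimension count (using $\dim\mathcal M_{k+1}(L_c,\beta)=n+\mu(\beta)+k-2$ together with $\dim b(\beta_{(i)})=n+\mu(\beta_{(i)})-1$) shows $\dim E(\beta)=n+\mu(\beta)-2$.

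The base case is $\mu(\beta)=2$. By Theorem \ref{discs} the only contributing classes are $\beta_1,\ldots,\beta_m$, each giving $m_{0,\beta_j}(1)=[L_c]$ because $ev_0$ is a diffeomorphism. Boundaries of the Maslov 2 moduli spaces vanish by the same theorem, so we may set $b(\beta_j)=0$ and declare the contribution to $\mathfrak{P}$ to be $c(\beta_j)=\rho^{v_j}$. This reproduces the target polynomial $\sum_j\rho^{v_j}T^{E_j}$ and introduces no chains of dimension $\leq n$.

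For the inductive step, suppose $b(\beta')$ has been chosen for all $\beta'$ of smaller energy. For $\beta$ with $\mu(\beta)\geq 3$, the chain $E(\beta)$ has dimension $n+\mu(\beta)-2\geq n+1$. Since $H_{\geq n+1}(L_c,\Z_2)=0$, as soon as one knows $E(\beta)$ is a cycle one can find $b(\beta)$ with $\partial b(\beta)=E(\beta)$ and $\dim b(\beta)=n+\mu(\beta)-1\geq n+2>n$, so the dimension hypothesis is automatic. For those $\beta$ with $\mu(\beta)=2$ not among the $\beta_j$, Theorem \ref{discs}(4) says $m_{0,\beta}=0$ and one takes $b(\beta)=0$. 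The case $\mu(\beta)=1$ is excluded by positivity (Theorem \ref{discs}(1)), which is precisely what prevents a chain of dimension $\leq n$ from being forced upon us.

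The main obstacle is verifying that $E(\beta)$ is a cycle so that exactness can be applied. This is the usual consequence of the filtered $A_\infty$--relation, applied to the truncated cochain $b_c^{<\omega(\beta)}$: the $A_\infty$ identity
$$\sum m^{\rho}(b_c,\ldots,m^{\rho}(b_c,\ldots,b_c),\ldots,b_c)=0$$
implies that the coefficient of $T^{\omega(\beta)}$ in $m^{\rho}_1\!\bigl(\sum_k m^{\rho}_k(b_c,\ldots,b_c)\bigr)$ vanishes modulo contributions from strictly lower energy, all of which are already of the form $c(\beta')[L_c]$ by the inductive hypothesis and are killed by $m_{1,0}=\partial$. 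Hence $\partial E(\beta)=0$. With this in hand the induction proceeds, producing the desired weak bounding cochain with $\mathfrak{P}(L_c,\rho,b_c)=o(L_c,\rho)$ and all components $b(\beta)$ of dimension strictly greater than $n$.
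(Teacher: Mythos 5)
Your proposal is correct and takes essentially the same route as the paper: the paper simply black-boxes your energy induction by citing Theorem 3.6.18 of \cite{fooo} (the same scheme it sketches for $R_P$ in Theorem \ref{unobstructed}), and then feeds in Theorem \ref{discs} exactly as you do. One small correction: the exclusion of $\mu(\beta)=1$ does not follow from Theorem \ref{discs}(1), which only rules out $\mu(\beta)\leq 0$; the correct reason, and the one the paper uses, is that $L_c$ is orientable, so every Maslov index is even and the $\mu=1$ case is vacuous.
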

\begin{proof}
This proposition follows from Theorem 3.6.18 in \cite{fooo}, which says that the only obstructions to the existence of a weak bounding cochain come from $m_{0,\beta}$ with $\mu(\beta)\leq 1$. When these vanish we can choose $b$ with dimension greater than n, and then
$$m_{0,\mu(\beta)=2}=\mathfrak{P}(L_c, \rho, b_c)[L_c].$$ 

>From \ref{discs} we have that $m^{\rho}_{0,\beta}=0$ when $\mu(\beta) \leq 0$ and $\sum_{\mu(\beta)=2} m^{\rho}_{0,\beta}= \sum^{m}_{i=1} \rho^{v_j}T^{E_i}[L_c]$. To complete the proof we just need to observe that $L_c$ is orientable, thus $\mu(\beta)$ is always even. 
\end{proof}

\begin{rmk}
In \cite{toricfooo} the authors prove that any element in $H^1(L_c)$ is a weak bounding cochain in the canonical model. The dimension condition on the bounding cochain we constructed above implies that it corresponds to zero in the canonical model.
\end{rmk}
\begin{rmk}
Assume that $L$ and $L'$ are monotone.
Then using the same argument as in the previous proof we can find $b$ and $b'$ as above. Then $\mathfrak{P}(L,\rho,b)$ is precisely the obstruction $o(L,\rho)$ as defined in Section 3.
Thus, with $\Z_2$ coefficients, the condition $\mathfrak{P}(L,\rho,b)+\mathfrak{P}(L',\rho',b')=0$ is the same condition as $o(L,\rho)=o(L',\rho')$ which is the condition needed to define Floer cohomology in the monotone setting.
\end{rmk}

For the bounding cochains constructed above, Cho and Oh compute the Floer cohomology of $(L_c, \mathcal{L}_{\rho})$:
\begin{theorem}
Let $Z=(Z_1,\ldots,Z_n)$ be as in (\ref{zed}). If $Z = 0$, then 
$$HF(L_c,\mathcal{L}_{\rho};\unovof)= H^{\ast}(L_c, \Z_2)\otimes \unovof .$$
Otherwise $HF(L_c,\mathcal{L}_{\rho};\unovof)$ vanishes.
\end{theorem}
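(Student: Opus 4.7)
The plan is to follow the strategy of Cho and Oh in \cite{chooh}: pass to a small model for $C(L_c,\unovof)$ and identify the Floer differential with a Koszul-type operator whose acyclicity is controlled by $Z$. First, I would transfer the filtered $A_{\infty}$-algebra $\big(C(L_c,\unovof), m_k^{\rho}\big)$ to its minimal model, supported on $H^{*}(L_c;\Z_2)\otimes\unovof \cong \Lambda^{*}((\Z_2)^n)\otimes\unovof$, using the homological perturbation formalism for filtered $A_{\infty}$-algebras from \cite{fooo}. By Theorem~\ref{torus} the bounding cochain $b_c$ is supported in chain dimensions strictly greater than $n=\dim L_c$, so it represents zero in $H^{*}(L_c;\Z_2)\otimes\unovof$; hence its image in the minimal model vanishes and the Floer differential reduces to $m_1^{\rho,\mathrm{min}}$.

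Next, I would compute $m_1^{\rho,\mathrm{min}}$ explicitly. By Theorem~\ref{discs} the only non-constant holomorphic discs with boundary on $L_c$ and Maslov index at most $2$ are the $\beta_j$, whose moduli spaces $\mathcal{M}_1(L_c,\beta_j)$ are regular and on which $ev_0$ is a diffeomorphism onto $L_c$. By Theorem 3.2 the boundary of a disc in class $\beta_j$ represents the homology class $v_j = \sum_i v_j^i\,[l_i] \in H_1(L_c;\Z)$, and therefore contributes the holonomy factor $\rho^{v_j}$ and the energy weight $T^{E_j}$. Unwinding the homological perturbation in a Morse (or de Rham) model in which $H^1(L_c;\Z_2)$ has basis $\{e_i^{*}\}$ dual to $\{[l_i]\}$, one identifies $m_1^{\rho,\mathrm{min}}$ with wedge product by
\[
Z^{*} \;:=\; \sum_{j=1}^{m}\rho^{v_j}T^{E_j}\,v_j^{*} \;=\; \sum_{i=1}^{n} Z_i\, e_i^{*} \;\in\; H^1(L_c;\Z_2)\otimes\unovof.
\]

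Finally, I would compute the cohomology of $\big(\Lambda^{*}((\Z_2)^n)\otimes\unovof,\,Z^{*}\wedge\cdot\big)$. If $Z=0$ then $Z^{*}=0$ and $m_1^{\rho,\mathrm{min}}=0$, so $HF(L_c,\mathcal{L}_{\rho};\unovof) = H^{*}(L_c;\Z_2)\otimes\unovof$. If $Z\neq 0$, then after passing to the fraction field $\unovf$ one can extend $Z^{*}$ to a basis of $H^1(L_c;\unovf)$, and wedge product by a basis one-form is the Koszul differential on the exterior algebra, which is acyclic; hence $HF(L_c,\mathcal{L}_{\rho};\unovof)=0$.

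The main obstacle is the explicit identification of $m_1^{\rho,\mathrm{min}}$ with $Z^{*}\wedge\cdot$: unwinding the homological perturbation requires careful bookkeeping of trees of $A_{\infty}$-operations, and one must verify that contributions from discs of Maslov index greater than $2$ and from higher $m_k^{\rho}$ do not corrupt the leading operator. The Fano/positivity hypothesis together with the dimension bound on $b_c$ from Theorem~\ref{torus} is tailored to eliminate these higher-order corrections, and working with a perfect product Morse function on $L_c$ makes the count of boundary intersections against the Morse cycles representing $e_i^{*}$ transparent via the parameterization in Theorem 3.2.
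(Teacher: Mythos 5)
First, note that the paper itself does not prove this statement: it is quoted from \cite{chooh} (and \cite{toricfooo}), so your proposal is being measured against a citation rather than an argument in the text. Your overall strategy --- pass to the canonical model on $H^{*}(L_c;\Z_2)\otimes\unovof$, observe that $b_c$ dies there by the dimension bound of Theorem \ref{torus}, and identify the reduced differential with a Koszul operator built from $Z$ --- is indeed the shape of the Cho--Oh computation (up to the cosmetic point that the Maslov index $2$ part of the operator is contraction by $Z\in H_1(L_c)\otimes\unovof$, i.e.\ the sweep of a cycle by the boundary loops $\partial\beta_j$, rather than wedge by a one-form; either way one gets a Koszul complex).

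The genuine gap is in the step you yourself flag as ``the main obstacle'': controlling the contributions to $m_1^{\rho,\mathrm{min}}$ from classes $\beta$ with $\mu(\beta)\geq 4$. The two hypotheses you invoke do not eliminate them. The dimension bound $\dim b_i>n$ only kills operations with at least one $b_c$-insertion (this is exactly the content of Proposition \ref{diff}); it says nothing about the single-input operations $m_{1,\beta}$. The Fano condition guarantees $\mu(\beta)\geq 2$ for every class with holomorphic representatives, but classes such as $\beta_i+\beta_j$ have $\mu=4$ and nonempty moduli spaces, and $m_{1,\beta}$ then acts on the canonical model as a map $H^{k}\to H^{k+1-\mu(\beta)}$, which is not a priori zero once $n\geq 3$. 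Showing that these terms either vanish or do not change the answer is precisely the nontrivial part of the computation in \cite{chooh} and \cite{toricfooo} (an energy-filtration/leading-term argument for the vanishing direction, and a unitality or ring-structure argument for the full-rank statement when $Z=0$); your sketch does not supply it. A secondary point: your vanishing argument passes to the fraction field $\unovf$, where the Koszul complex with $Z\neq 0$ is acyclic; over $\unovof$ its cohomology is in general a nonzero torsion module, so as written you only prove vanishing of $HF\otimes_{\unovof}\unovf$ rather than of $HF(\cdot\,;\unovof)$ itself (the statement in the paper is arguably imprecise on the same point, and only the $\unovf$ version is used in the applications).
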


With these results we can finally show that our description of the Floer complex in Section 4 agrees with the general one.

\begin{theorem}
Consider the Lagrangians $R_P$ and $(L_c,\mathcal{L}\rho)$. For the weak bounding cochains $b_P$ and $b_c$ described in Theorems \ref{unobstructed} and \ref{torus} we have:
\begin{enumerate}
	\item The Floer cohomology $HF((R,b_R),(L_c,\mathcal{L}_{\rho},b_c);\unovf)$ is defined if and only if $o(L_c, \rho)=0$;
	\item The differential $\delta_{b_P,b_c}$ coincides with $\delta$ defined in (\ref{formula}).
\end{enumerate}
\end{theorem}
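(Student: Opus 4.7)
Part 1 is essentially a direct bookkeeping exercise. By Theorem \ref{obs}, the operator $\delta_{b_P,b_c}$ squares to $(\mathfrak{P}(R,b_R)+\mathfrak{P}(L_c,\rho,b_c))\cdot\mathrm{id}$. Theorem \ref{unobstructed} gives $\mathfrak{P}(R,b_R)=0$, and Theorem \ref{torus} gives $\mathfrak{P}(L_c,\rho,b_c)=\sum_j\rho^{v_j}T^{E_j}=o(L_c,\rho)$. Hence $\delta_{b_P,b_c}^2=o(L_c,\rho)\cdot\mathrm{id}$, so the Floer cohomology is well-defined precisely when $o(L_c,\rho)=0$.

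For part 2 the plan is to invoke Proposition \ref{diff} and then match the resulting $n_{0,0}$ to the formula (\ref{formula}). The hypotheses of Proposition \ref{diff} are that the bounding cochains have dimension $>n$ (immediate from Theorems \ref{unobstructed} and \ref{torus}) and that no holomorphic strip has nonpositive Maslov index. For the latter, I would reuse the Schwarz reflection argument from Section 4: any nonconstant $J$-holomorphic strip $u$ with boundary on $R$ and $L_c$ doubles to a nonconstant disc $\tilde u$ with boundary on $L_c$ and $\mu(\tilde u)=2\mu(u)$. If $\mu(u)\le 0$ then $\mu(\tilde u)\le 0$, contradicting Theorem \ref{discs}(1). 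Thus Proposition \ref{diff} applies and $\delta_{b_P,b_c}=n_{0,0}$.

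Next I unpack $n_{0,0}$. The summand indexed by $(q,B)$ is nonzero only when the unmarked virtual dimension $\mu(B)-1$ equals $0$, i.e.\ $\mu(B)=1$. By Proposition \ref{strips}, the Maslov index $1$ strips starting at a point $p=[c_1,\dots,c_m]\in R\cap L_c$ are exactly the $m$ upper halves $u_j$ of the discs $z\mapsto[c_1,\dots,-c_j z,\dots,c_m]$, each contributing a single element of the moduli space; Lemma 4.2 ensures regularity, so the Kuranishi structure on $\mathcal M^{J}_{0,0}(p,q;B)$ is trivial and the mod $2$ count is honest. The endpoint of $u_j$ is $f_j(p)$ by the $K$-action computation in Section 4, and the symplectic area is $E_j/2$ because $\omega(\tilde u_j)=E_j$. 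The holonomy factor is $\sqrt{\rho^{v_j}}$ by the normalization $\alpha_\epsilon=\rho^{\epsilon/2}\alpha_0\circ\mathrm{par}(l_\epsilon^{-1})$ chosen before Theorem \ref{main} and the path-loop calculation $\alpha_\epsilon\circ\mathrm{par}(l_j)=\rho^{v_j/2}\alpha_\delta$ carried out immediately after that normalization. Assembling these contributions reproduces $\delta=\sum_j\sqrt{\rho^{v_j}}T^{E_j/2}f_j$, as claimed.

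The one subtle point — and the step I expect will require the most care — is justifying that the Kuranishi-theoretic count $n_{0,0}$ really is the naive count of regular Maslov index 1 strips. This needs the observation that a disc or strip with at least one boundary marked point has trivial isotropy, so the ``fix'' part of the perturbed moduli space in Theorem \ref{thm:main_kuranishi_space_thm} is empty in this dimension; combined with Lemma 4.2 the perturbation can be taken to be the trivial one. Once this identification is in place, Proposition \ref{strips}, Theorem 3.3 and the holonomy computation from Section 4 combine to give precisely (\ref{formula}), completing the proof.
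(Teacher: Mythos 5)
Part 1 of your proposal matches the paper's argument exactly, and the overall strategy for part 2 (Proposition \ref{diff} to reduce to $n_{0,0}$, then Proposition \ref{strips}, Lemma 4.2, Theorem 3.3 and the holonomy computation to identify $n_{0,0}$ with (\ref{formula})) is also the paper's route. However, there is a genuine gap in your treatment of the moduli space $\mathcal{M}(p,q;B)$ with $\mu(B)=1$. This space is a compactified space of \emph{stable} maps, and you implicitly assume it consists only of honest strips, so that Proposition \ref{strips} and the regularity lemma give the count. What must be excluded is a nodal configuration consisting of a constant strip at a point of $R\cap L_c$ with a Maslov index~$1$ disc bubble attached whose boundary lies on $R_P$. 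Such discs are not ruled out by positivity: $R_P$ is in general only known to satisfy $\mu>0$ (it can be non-orientable), so $\mu=1$ discs on $R_P$ may well exist, and a constant strip carries Maslov index $0$, making the total index $1$. Your remark about trivial isotropy addresses the orbifold issue but not this compactness issue; if such configurations were present in the compactified moduli space, the Kuranishi count would not obviously reduce to the naive strip count. The paper closes this gap by doubling the hypothetical Maslov index $1$ disc on $R_P$ to a holomorphic sphere with $c_1=1$ passing through a point of $L_c$, and then invoking the fact (from \cite{toricfooo}) that spheres with $c_1=1$ do not meet the torus fibers. (Discs on $L_c$ have $\mu\geq2$ and sphere bubbles contribute $2c_1\geq2$, so this is the only problematic configuration.)

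A smaller point of the same flavor: the hypothesis of Proposition \ref{diff} is that the compactified spaces $\mathcal{M}^{J_t}_{i,j}(p,q;B)$ are empty for $\mu(B)\leq0$, whereas your reflection argument only shows that nonconstant honest strips have $\mu\geq1$. To verify the hypothesis as stated you should also observe that any stable configuration decomposes into a strip component of nonnegative index plus disc and sphere bubbles of positive index, so the total class cannot have $\mu(B)\leq0$. This is easily supplied from the same positivity facts, but it should be said.
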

\begin{proof}
Theorem \ref{obs} implies that the Floer cohomology is defined iff $\mathfrak{P}(R,b_R)+\mathfrak{P}(L_c,\rho,b_c)=0$. But
$$\mathfrak{P}(R,b_R)+\mathfrak{P}(L_c,\rho,b_c)= \sum^{m}_{j=1} \rho^{v_j}T^{E_j}=o(L_c, \rho),$$
by Theorems \ref{unobstructed} and \ref{torus}. Thus we have proved the first part of the theorem.

For the second part, note that $(R_P,b_R)$ and $((L_c,\mathcal{L}\rho),b_c)$ satisfy the conditions of Proposition \ref{diff}. So we have $\delta_{b_P,b_c}=n_{0,0}$. So we only have to consider the moduli spaces $\mathcal{M} (p,q; B)$, with $\mu(B)=1$.
A priori an element $u \in \mathcal{M} (p,q; B)$ is a stable map: the domain of the map could be a nodal curve, not just a strip. However, since all strips, discs and spheres have positive Maslov (or Chern) index, if $\mu(B)=1$ there is only one possibility for $u$ other than a strip: a constant strip attached to a disc of Maslov index one with boundary on $R$. However this cannot happen. If such  a configuration existed, by doubling the disc we would obtain a sphere of Chern number one intersecting $L_c$. But, as is shown in \cite{toricfooo}, spheres with $c_1=1$ do not intersect the torus fibers.  

This shows that the space $\mathcal{M} (p,q; B)$ consists only of strips.
Since we already know that these strips are regular we have proved the theorem.
\end{proof}

\bibliographystyle{plain}
\bibliography{imrn-bib}

\end{document}